\documentclass{article}
\usepackage[utf8]{inputenc}

\textwidth 6in \textheight 9in \topmargin -0.7in \oddsidemargin
0.3in \evensidemargin 0.3in

\headsep=17pt
\parskip=.5pt
\parindent=12pt

\usepackage{amssymb,latexsym,amsmath,epsfig,amsthm,amsmath,amscd, graphicx, verbatim, setspace,authblk} 

\makeatletter

\renewcommand\section{\@startsection {section}{1}{\z@}
{-30pt \@plus -1ex \@minus -.2ex}
{2.3ex \@plus.2ex}
{\normalfont\normalsize\bfseries\boldmath}}

\renewcommand\subsection{\@startsection{subsection}{2}{\z@}
{-3.25ex\@plus -1ex \@minus -.2ex}
{1.5ex \@plus .2ex}
{\normalfont\normalsize\bfseries\boldmath}}

\renewcommand{\@seccntformat}[1]{\csname the#1\endcsname. }

\makeatother

\theoremstyle{definition}
\theoremstyle{plain}
\newtheorem{thm}{Theorem}[section]
\newtheorem{lem}[thm]{Lemma}
\newtheorem{cor}[thm]{Corollary}
\newtheorem{conj}[thm]{Conjecture}

\newtheorem{definition}[thm]{Definition}

\usepackage{mathtools}
\usepackage{amsthm}
\usepackage{amssymb}
\usepackage{amsfonts}
\usepackage{graphicx}
\usepackage{ytableau}
\usepackage{hyperref}
\usepackage[utf8]{inputenc}
\usepackage{todonotes}
\usepackage{tikz}
\usepackage{comment}

\newcommand{\exm}{\operatorname{ex}}
\newcommand{\satm}{\operatorname{sat}}
\newcommand{\ssatm}{\operatorname{ssat}}

\newcommand{\ex}{\operatorname{Ex}}
\newcommand{\sat}{\operatorname{Sat}}
\newcommand{\ssat}{\operatorname{Ssat}}

\newcommand{\up}{\operatorname{up}}

\title{Sequence saturation}
\author{Anand, Jesse Geneson, Suchir Kaustav, and Shen-Fu Tsai}
\date{}

\begin{document}

\maketitle

\begin{abstract}
In this paper, we introduce saturation and semisaturation functions of sequences, and we prove a number of fundamental results about these functions. Given a forbidden sequence $u$ with $r$ distinct letters, we say that a sequence $s$ on a given alphabet is $u$-saturated if $s$ is $r$-sparse, $u$-free, and adding any letter from the alphabet to an arbitrary position in $s$ violates $r$-sparsity or induces a copy of $u$. We say that $s$ is $u$-semisaturated if $s$ is $r$-sparse and adding any letter from the alphabet to $s$ violates $r$-sparsity or induces a new copy of $u$. Let the saturation function $\operatorname{Sat}(u, n)$ denote the minimum possible length of a $u$-saturated sequence on an alphabet of size $n$, and let the semisaturation function $\operatorname{Ssat}(u, n)$ denote the minimum possible length of a $u$-semisaturated sequence on an alphabet of size $n$. For alternating sequences, we determine both the saturation function and the semisaturation function up to a constant multiplicative factor. We show for every sequence that the semisaturation function is always either $O(1)$ or $\Theta(n)$. For the saturation function, we show that every sequence $u$ has either $\operatorname{Sat}(u, n) \ge n$ or $\operatorname{Sat}(u, n) = O(1)$. For every sequence with $2$ distinct letters, we show that the saturation function is always either $O(1)$ or $\Theta(n)$.
\end{abstract}

\section{Introduction}

We say that sequences $u$ and $v$ are isomorphic if $v$ can be obtained from $u$ by a one-to-one renaming of the letters. Given a forbidden sequence $u$, we say that the sequence $s$ avoids $u$ if no subsequence of $s$ is isomorphic to $u$. If $s$ avoids $u$, then we call $s$ $u$-free. We say that $s$ is $r$-sparse if any $r$ consecutive letters in $s$ are distinct. For example, $s$ is $2$-sparse if it has no adjacent same letters. For another example, the sequence $a b a b \dots$ is $2$-sparse but not $3$-sparse, and the sequence $a b c a b c \dots$ is $3$-sparse but not $4$-sparse. 

In this paper, we focus on the lengths of sequences $s$ which avoid some forbidden sequence $u$ and satisfy a notion of maximality. If $u$ has a total of $r$ distinct letters, then we require that $s$ must be $r$-sparse. Otherwise, it is clear that $s$ can be arbitrarily long and still avoid $u$. 

Davenport-Schinzel sequences of order $s$ are sequences with no adjacent same letters and no alternating subsequences of length $s+2$. In other words, for any integer $s\ge 0$, let sequence $u_s:=xyxy\dots$ be an alternating sequence of $2$ letters $x$ and $y$ of length $s+2$. An $(n,s)$ Davenport-Schinzel sequence, or $DS(n,s)$-sequence, is a $2$-sparse sequence on $n$ letters that has no subsequence isomorphic to $u_s$. Davenport-Schinzel sequences have been used for a number of applications including bounds on the complexity of lower envelopes \cite{ds} and the complexity of faces in arrangements of arcs \cite{agarwal}. 

Generalized Davenport-Schinzel sequences are sequences which avoid some forbidden sequence. More precisely, given any forbidden sequence $u$ with a total of $r$ distinct letters, the generalized Davenport-Schinzel sequences corresponding to $u$ are $r$-sparse sequences which avoid $u$. Generalized Davenport-Schinzel sequences have also been used for a number of applications including bounds on the number of edges in $k$-quasiplanar graphs \cite{FPS, SW} and the maximum number of ones in 0-1 matrices which avoid certain forbidden patterns \cite{cibulka, geneson20, geneson19, geneson21a, wellman}. 

In particular, it is possible to translate many results about generalized Davenport-Schinzel sequences into results about 0-1 matrices with forbidden patterns \cite{keszegh} and vice versa \cite{pettie11}. In addition, connections have been found between generalized Davenport-Schinzel sequences and collections of interval chains that are not stabbed by the same tuple \cite{geneson15}. 

Most of the focus on generalized Davenport-Schinzel sequences has been on their extremal functions. Specifically, given a forbidden sequence $u$ with $r$ distinct letters, we define $\ex(u, n)$ to be the maximum possible length of an $r$-sparse sequence with at most $n$ distinct letters which avoids $u$. Nivasch \cite{nivasch} and Pettie \cite{pettie15} found nearly sharp bounds on $\ex(u, n)$ for all alternations $u$, i.e., the maximum possible lengths of Davenport-Schinzel sequences of all orders. Bounds for forbidden sequences with more distinct letters such as $(a_1 a_2 \dots a_r)^t$ and $abc (acb)^t abc$ were found in \cite{GPT, GT15, GTT} using computational methods combined with bounds from \cite{nivasch} on extremal functions of special families of sequences called formations. 

In this paper, we focus on $2$ different but related functions for generalized Davenport-Schinzel sequences called saturation functions and semisaturation functions. Saturation functions have previously been studied for graphs \cite{damasdi, erdos, furedi, korandi}, ordered graphs \cite{BK}, posets \cite{ferrera, KLM}, set systems \cite{frankl, gerbner}, 0-1 matrices \cite{BC, FK, geneson21, berendsohn}, and multi-dimensional 0-1 matrices \cite{gt23, tsai23}. To motivate the definitions of saturation and semisaturation functions for sequences, we discuss the definitions and some results for saturation and semisaturation functions of 0-1 matrices. 

\subsection{Pattern avoidance, saturation, and semisaturation for 0-1 matrices}

We say that 0-1 matrix $A$ avoids 0-1 matrix $B$ if no submatrix of $A$ can be turned into $B$ by changing any number of ones to zeroes. The extremal function $\exm(n, P)$ is the maximum number of ones in an $n \times n$ 0-1 matrix which avoids $P$. The study of this extremal function has been motivated by several problems that may at first seem unrelated such as the Stanley-Wilf conjecture \cite{MT}, the number of unit distances in a convex $n$-gon \cite{ngon}, and algorithms for finding minimal paths in rectilinear grids with obstacles \cite{mitchell}.

Much of the focus on extremal functions of 0-1 matrices has been on determining when $\exm(n, P)$ is linear and when it is nonlinear. Marcus and Tardos \cite{MT} showed that permutation matrices have linear extremal functions, thus implying the Stanley-Wilf conjecture by a result of Klazar \cite{klazar}. This linear upper bound was later generalized to any matrix obtained from a permutation matrix by doubling the columns \cite{geneson2009}. In combination with a construction from \cite{keszegh}, this fact was used to demonstrate the existence of infinitely many 0-1 matrices which are minimally nonlinear \cite{geneson2009}. Fox \cite{fox13} sharpened the upper bounds on permutation matrices, and these bounds were extended to multidimensional permutation matrices \cite{KM, GT17}. Other papers \cite{Crowdmath2018, GT2020} identified a number of properties of minimally non-linear 0-1 matrices, including bounds on the ratio of their dimensions.

Brualdi and Cao \cite{BC} initiated the study of saturation functions of 0-1 matrices. They defined the saturation function $\satm(n, P)$ to be the minimum number of ones in an $n \times n$ 0-1 matrix which avoids $P$ such that changing any zero to a one induces a copy of $P$. Fulek and Keszegh \cite{FK} found a dichotomy for the 0-1 matrix saturation function: it is always either $\Theta(n)$ or $O(1)$. They found a single permutation matrix $P$ for which $\satm(n, P) = O(1)$ and asked if there are more. Geneson \cite{geneson21} showed that almost all permutation matrices $P$ have $\satm(n, P) = O(1)$, and Berendsohn \cite{berendsohn} later characterized when $\satm(n, P) = O(1)$ for permutation matrices $P$. It is still an open problem to characterize when $\satm(n, P) = O(1)$ for general 0-1 matrices $P$.

In addition to their results for saturation functions of 0-1 matrices, Fulek and Keszegh also defined the semisaturation function $\ssatm(n, P)$ to be the minimum number of ones in an $n \times n$ 0-1 matrix such that changing any zero to a one induces a new copy of $P$. Note that in this definition, the $n \times n$ 0-1 matrix does not have to avoid $P$. By definition, we have $\ssatm(n, P) \le \satm(n, P) \le \exm(n, P)$. As with the saturation function of 0-1 matrices, Fulek and Keszegh found that the semisaturation function is always either $\Theta(n)$ or $O(1)$. Unlike the saturation function, they exactly characterized the dichotomy for the semisaturation function. Tsai \cite{tsai23} extended the saturation and semisaturation functions to multidimensional 0-1 matrices.

\subsection{Saturation functions and semisaturation functions of forbidden sequences}

In analogy with forbidden 0-1 matrices, we define saturation and semisaturation functions of forbidden sequences. Given a forbidden sequence $u$ with $r$ distinct letters, we say that a sequence $s$ on a given alphabet is $u$-saturated if $s$ is $r$-sparse, $u$-free, and adding any letter from the alphabet to an arbitrary position in $s$ violates $r$-sparsity or induces a copy of $u$. Let the saturation function $\sat(u, n)$ denote the minimum possible length of a $u$-saturated sequence on an alphabet of size $n$. It is easy to see that $\sat(u, n) = \min(n, k-1)$ and $\sat(v,n)=n$ for $n\geq 1$ when $u = a_1a_2\ldots a_k$ and $v=a_1a_2\ldots a_ka_1$. In the special case that $u$ is an alternation, we let $\xi (n, s)$ denote the minimum possible length of a $u_s$-saturated sequence on an alphabet of size $n$. Recall that $u_s$ has length $s+2$, and note that $\xi(n,1)=n$ for every $n \ge 1$.

Given a forbidden sequence $u$ with $r$ distinct letters, we say that a sequence $s$ on a given alphabet is $u$-semisaturated if $s$ is $r$-sparse and adding any letter from the alphabet to $s$ violates $r$-sparsity or induces a new copy of $u$. Note that with sequence semisaturation, we do not require $s$ to avoid $u$, as with 0-1 matrix semisaturation. Let the semisaturation function $\ssat(u, n)$ denote the minimum possible length of a $u$-semisaturated sequence on an alphabet of size $n$. Since we require $r$-sparsity in both definitions, we have $\ssat(u, n) \le \sat(u, n) \le \ex(u, n)$. As with the sequence saturation function, it is easy to see that $\ssat(u, n) = \min(n, k-1)$ and $\ssat(v,n)=n$ for $n\geq 1$ when $u = a_1a_2\ldots a_k$ and $v=a_1a_2\ldots a_ka_1$.

\subsection{Our results}

In this paper, we prove a number of results about saturation functions and semisaturation functions of sequences. For alternating sequences of the form $a b a b \dots$, we determine both the saturation function and the semisaturation function up to a constant multiplicative factor. We also prove a number of structural results about sequences which are saturated for alternations. For example, we show that if an alternating sequence $u_s$ has even length, then the first and last letters of any $u_s$-saturated sequence are the same. On the other hand, we show that if $u_s$ has odd length, then the first and last letters of any $u_s$-saturated sequence are different. 

We also prove general results about saturation and semisaturation functions. In particular, we demonstrate a dichotomy for the semisaturation functions of sequences. Specifically, we show that for any sequence $u$ we have $\ssat(u, n) = O(1)$ if and only if the first letter and the last letter of $u$ each occur exactly once, and otherwise we have $\ssat(u, n) = \Theta(n)$. For the saturation function, we show that every sequence $u$ has either $\sat(u, n) \ge n$ for every positive integer $n$ or $\sat(u, n) = O(1)$. We prove that every sequence $u$ in which every letter occurs at least twice has $\sat(u, n) \ge n$. Moreover, we show that $\sat(u, n) = \Theta(n)$ or $\sat(u, n) = O(1)$ for every sequence $u$ with $2$ distinct letters. 

Our results about saturation functions of alternating sequences are in Section~\ref{s:sat_alt}. In Section~\ref{s:sat_gen}, we prove the general results about saturation functions of sequences. Our results about semisaturation functions of sequences are in Section~\ref{s:ssat}. Finally, in Section~\ref{s:conc}, we discuss some remaining questions and future directions.

\section{Saturation functions of Davenport-Schinzel sequences}\label{s:sat_alt}

We start with a construction which yields an upper bound on the saturation functions of alternating sequences.

\begin{lem}
	\label{lemma:construction}
$\xi(n,s)\leq s(n-1)+1$ for all $n\ge 2$.
\end{lem}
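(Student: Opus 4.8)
The plan is to construct an explicit $u_s$-saturated sequence on an alphabet of size $n$ with length at most $s(n-1)+1$. The target length $s(n-1)+1$ strongly suggests a sequence built from $n-1$ "blocks" of length $s$, plus one extra letter, so I would aim to write down such a block construction and then verify the three required properties: $s$-sparsity, $u_s$-freeness, and saturation (adding any letter either breaks $s$-sparsity or creates a copy of the alternation $xyxy\cdots$ of length $s+2$).

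**First I would** fix the alphabet $\{a_1, a_2, \dots, a_n\}$ and define a candidate sequence. A natural guess is to take, for each $i$ from $1$ to $n-1$, a block $B_i$ that is an alternation of length $s$ between $a_i$ and $a_{i+1}$, arranged so consecutive blocks overlap correctly in their shared letter; concretely something like $B_i = a_i a_{i+1} a_i a_{i+1} \cdots$ ($s$ letters), and then concatenate $B_1 B_2 \cdots B_{n-1}$, possibly with a single extra letter appended to reach exactly $s(n-1)+1$ and to make the endpoints behave correctly (recall the parity phenomenon mentioned in the introduction: for even $s+2$ the first and last letters of a saturated sequence coincide, for odd they differ, so the construction must be set up with the right parity of block length and the right trailing letter). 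I would need to check carefully that at each block boundary the sequence remains $s$-sparse — since within a block only two letters appear and a block has length $s$, the only danger is a repeat straddling a boundary, which the alternating overlap is designed to avoid.

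**Next I would** verify $u_s$-freeness: any alternation of length $s+2$ in the constructed sequence uses only two letters, say $x$ and $y$. If $x$ and $y$ are a "consecutive pair" $a_i, a_{i+1}$, then all their occurrences lie in the single block $B_i$ (or in $B_{i-1} \cup B_i$ for the shared letter — this is the delicate case), which has total length $s < s+2$, so no alternation of length $s+2$ fits. If $x$ and $y$ are not a consecutive pair, then at most one of them appears in any given block, so between them we can extract at most a trivial alternation. I expect the bookkeeping at the block boundaries — counting how many times the shared letter $a_{i+1}$ can be used across $B_i$ and $B_{i+1}$ — to be the step requiring the most care, and it is likely why the extra "$+1$" and a parity choice are needed.

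**The hard part will be** checking saturation, i.e., that inserting any letter $a_j$ at any position (without violating $s$-sparsity) creates a new copy of $u_s$. For an insertion to respect $s$-sparsity, $a_j$ must be inserted into a "gap" where its nearest occurrences on both sides are far enough; I would argue that any such legal insertion of $a_j$ lies inside or adjacent to the block(s) $B_{j-1}, B_j$ involving $a_j$, and that inserting one more copy of $a_j$ there extends the existing length-$s$ alternation in that region to length $s+1$, and then one more letter from the adjacent block (the shared letter of the neighboring pair) completes an alternation of length $s+2$. This requires that every block be "locally maximal" — essentially that the two-letter alternations are already as long as $s$-sparsity permits without forbidding $u_s$ — which is exactly the design constraint that forces the block length to be $s$ rather than shorter. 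I would organize the saturation check by cases on where $a_j$ is inserted (inside $B_j$, at a boundary, or in a block not involving $a_j$, the last being impossible by sparsity), and I anticipate the endpoint gaps (before $B_1$ and after $B_{n-1}$) to need separate attention, which is presumably where the trailing extra letter earns its keep.
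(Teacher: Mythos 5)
Your overall plan — construct an explicit sequence, then check $2$-sparsity, $u_s$-freeness, and saturation — matches the paper's, and your chain of consecutive-pair blocks $B_i = a_i\,a_{i+1}\,a_i\,a_{i+1}\cdots a_i$ ($s$ terms each), concatenated and followed by a trailing $a_n$, is exactly the paper's construction when $s$ is \emph{odd}.

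The gap is the even-$s$ case, which is not a parity tweak of the chain but a genuinely different construction your sketch never produces. When $s$ is even, $B_i$ ends with $a_{i+1}$ and $B_{i+1}$ begins with $a_{i+1}$, so \emph{every} block boundary violates $2$-sparsity. Reversing each block to $a_{i+1}\,a_i\cdots a_i$ restores sparsity but then fails saturation and/or freeness: for $s=2$, $n=3$, the chain gives $a_2\,a_1\,a_3\,a_2$, which is not $abab$-saturated (inserting $a_1$ between $a_3$ and $a_2$ is legal and creates no $abab$), while appending one more letter to reach length $5$ either introduces $u_2$ (trailing $a_1$ gives $a_2 a_1 a_2 a_1$; trailing $a_3$ gives $a_2 a_3 a_2 a_3$) or repeats the last letter. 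The paper's even-$s$ construction is instead a ``star'' centered at $a_1$:
\[
[\underbrace{1,2,1,2,\ldots,1,2}_{s},\underbrace{1,3,1,3,\ldots,1,3}_{s},\ldots,\underbrace{1,n,1,n,\ldots,1,n}_{s},1],
\]
in which $1$ occupies every odd position (so no $1$ can ever be legally inserted), each $\{1,i\}$-alternation has length exactly $s+1$, and inserting any $i\ge2$ extends that alternation to $s+2$. The parity observation you invoke (that for even $s$ a saturated sequence must begin and end with the same letter) is exactly what forces the even construction to look this different, but you stop at noting the phenomenon rather than producing the construction.

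Two smaller issues. Since $u_s$ has two distinct letters, the sparsity condition is $2$-sparsity, not ``$s$-sparsity'' as you repeatedly write. Also, your claim that any legal insertion of $a_j$ must lie inside or adjacent to the blocks containing $a_j$ is false: under $2$-sparsity alone, $a_j$ can be inserted far from all its existing occurrences. Saturation still holds, but by a different mechanism — the inserted $a_j$, combined with the length-$(s+1)$ alternation on $\{a_j,a_{j\pm1}\}$ (or $\{a_1,a_j\}$ in the star) that already lives in the relevant block, gives the required length-$(s+2)$ copy wherever the new $a_j$ is placed — so that part of your argument would need to be reworked even in the odd case.
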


\begin{proof}
Use the following construction
$$\begin{cases}
[\underbrace{1,2,1,2,\ldots, 1,2}_{s\text{~terms}},\underbrace{1,3,1,3,\ldots, 1,3}_{s\text{~terms}},\ldots, \underbrace{1,n,1,n,\ldots, 1,n}_{s\text{~terms}},1] & \text{if $s$ is even}\\
[\underbrace{1,2,1,2,\ldots,1}_{s\text{~terms}},\underbrace{2,3,2,3,\ldots,2}_{s\text{~terms}},\ldots, \underbrace{n-1,n,n-1,n,\ldots,n-1}_{s\text{~terms}},n] & \text{if $s$ is odd}
\end{cases}$$
Clearly, the above $DS(n,s)$-sequences of length $s(n-1)+1$ are $u_s$-saturated.
\end{proof}

In the next lemma, we prove several results about $a b a b$-saturated sequences. Note that $u = a b a b$ is the longest alternating sequence for which $\ex(u, n) = O(n)$ \cite{hs86}.

\begin{lem}
	For any $abab$-saturated sequence $x=(x_1,x_2,\dots)$ on $n$ letters
\begin{enumerate}
\item The length of $x$ is $2n-1$.
\item The first and last letter of $x$ are identical.
\item Suppose $x_i=x_j=a_k$ for some $i<j$ and there is no other occurrence of $a_k$ between them. Then $x_{i+1}=x_{j-1}$.
\end{enumerate}
\end{lem}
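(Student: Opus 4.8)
The plan is to isolate one structural fact about $abab$-free sequences and then bootstrap it using the saturation hypothesis. Call the set of positions strictly between two consecutive occurrences of a letter $c$ (i.e.\ $x_p = x_q = c$ with no $c$ strictly between them) a \emph{$c$-block}. I would first show that every letter $d$ occurring inside a $c$-block occurs \emph{only} inside that block: an occurrence of $d$ before the block would exhibit the alternation $d\,c\,d\,c$, and one after the block the alternation $c\,d\,c\,d$, each a copy of $abab$. Since $2$-sparsity rules out $x_p = x_q = c$ with $q = p+1$, every $c$-block is a nonempty subword on a ``private'' alphabet disjoint from the rest of $x$. A companion fact I would record: replacing the subword filling a $c$-block by any other nonempty $2$-sparse $abab$-free word on the same private alphabet keeps the whole sequence $2$-sparse and $abab$-free, because any new $4$-alternation, having a term inside the new block and hence a letter of the private alphabet, has both occurrences of that letter inside the block, then its other letter trapped between them and likewise private, so all four of its terms inside the block --- impossible.

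For part~2, write $x_1 = a$ and let $1 = p_1 < \cdots < p_k$ be the positions of $a$, so $x = a\,B_1\,a\,B_2\cdots a\,B_{k-1}\,a\,T$, where each $B_\ell$ is an $a$-block and $T$ is the (possibly empty) suffix after $p_k$. If $T \neq \emptyset$ then $x_m \neq a$, so appending $a$ to $x$ preserves $2$-sparsity; and it induces no copy of $abab$, since a $4$-alternation using the appended last letter would have the form $\gamma\,a\,\gamma\,a$ with $\gamma \neq a$ occurring both before some $p_t$ and strictly after it --- but the earlier occurrence of $\gamma$ lies in some $a$-block $B_s$ with $p_{s+1} \le p_t$, making $\gamma$ private to $B_s$ and so unable to recur past $p_{s+1}$. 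This contradicts saturation, so $T = \emptyset$ and $x_m = a = x_1$.

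Part~1 I would prove by induction on the alphabet size $n$, the case $n = 1$ (so $x = a$, of length $1$) being immediate. For $n \ge 2$: a saturated sequence uses every letter of its alphabet (an unused letter could be appended), so in the decomposition above the alphabet of $x$ is the disjoint union of $\{a\}$ with the private alphabets $A_{B_1},\dots,A_{B_{k-1}}$, and each $|A_{B_\ell}| < n$; moreover $k \ge 2$, since $a$ occupies both ends. Each $B_\ell$ is $abab$-saturated on $A_{B_\ell}$: inserting a letter of $A_{B_\ell}$ into $B_\ell$ without breaking $2$-sparsity or $abab$-freeness would, by the replacement fact, give a legal insertion into $x$. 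By the inductive hypothesis $|B_\ell| = 2|A_{B_\ell}| - 1$, whence
$$|x| = k + \sum_{\ell=1}^{k-1}|B_\ell| = k + \sum_{\ell=1}^{k-1}\bigl(2|A_{B_\ell}| - 1\bigr) = 1 + 2\sum_{\ell=1}^{k-1}|A_{B_\ell}| = 1 + 2(n-1) = 2n-1.$$

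For part~3, if $x_i = x_j = a_k$ are consecutive occurrences, then $B := x_{i+1}\cdots x_{j-1}$ is (by the structural fact) a nonempty $2$-sparse $abab$-free word on a private alphabet $A_B$ of size $< n$; the same insertion-plus-replacement argument shows $B$ is $abab$-saturated on $A_B$, so by part~2 (or trivially when $|A_B| = 1$) its first and last letters agree, i.e.\ $x_{i+1} = x_{j-1}$. I expect the main obstacle to be not the induction but the careful casework behind the two assertions of the first paragraph and the ``appended $a$ creates no $abab$'' step --- that is, pinning down exactly why every offending $4$-alternation must live inside a single block.
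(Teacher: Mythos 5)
Your proof is correct and follows the same broad inductive strategy as the paper: decompose the saturated sequence along the occurrences of its first letter $a$, observe that $abab$-freeness forces the letters between consecutive occurrences of $a$ to be private to that block, and then apply the inductive hypothesis to each block. Two differences are worth flagging. The paper uses a recursive decomposition $x = 1u1v$ (first block, then the remainder), asserting \emph{without proof} that $u$ and $1v$ are themselves $abab$-saturated on their sub-alphabets, whereas you flatten $x$ into $a\,B_1\,a\cdots a\,B_{k-1}\,a$ and explicitly justify the saturation of each block via the ``replacement fact'': any insertion into a block that keeps the block $2$-sparse and $abab$-free lifts to a legal insertion into $x$, because the block's alphabet is private. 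That observation is exactly what the paper leaves implicit, and your write-up is the more careful one for supplying it. Second, you prove part~2 directly --- if the tail $T$ after the last $a$ were nonempty, appending $a$ would create no new copy of $abab$, since the alleged $\gamma a \gamma a$ would force the letter $\gamma$ to escape its private block --- whereas the paper derives part~2 from part~1's exact count $2n-1$ together with length-minimality of $1v$. Both routes work, but yours is more self-contained (part~2 is established before part~1 and used inside it, rather than the other way around) and does not need to thread through the length computation.
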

\begin{proof}
\begin{enumerate}
\item 
The proof of Lemma~\ref{lemma:construction} shows the existence of $abab$-saturated sequence on $n$ letters with length $2n-1$.
We prove the result by induction on $n$.
The statement holds for $n=1$, so assume $n>1$. Consider an $abab$-saturated sequence $x$ on $n$ letters whose first letter is $1$. Sequence $x$ must have at least another $1$, because otherwise appending a $1$ to the end of $x$ does not create a subsequence isomorphic to $abab$. Let the subsequence of $x$ between the first $2$ occurrences of $1$ be $u$ and write
$$x=1u1v.$$
Observe that the letters in sequences $u$ and $v$ are disjoint, as otherwise $x$ contains $abab$. Note that every letter in $[n]$ appears in $x$, as otherwise adding the letter to $x$ does not create a copy of $abab$. Assume that $u$ has $k$ distinct letters where $0<k<n$, so sequence $1v$ has exactly $n-k$ distinct letters. Sequence $u$ itself is $abab$-saturated, hence by inductive hypothesis its length is $2k-1$. Similarly sequence $1v$ is $abab$-saturated and therefore has length $2(n-k)-1$. Thus, the total length of $x$ is $2k-1+2(n-k)-1+1=2n-1$.
\item Continuing the inductive proof,  sequence $1v$ is $abab$-saturated. 
By inductive hypothesis the last letter of $1v$ is $1$, i.e., the first and last letters of $x$ are identical.
	\item Similarly, by inductive hypothesis the statement holds for $u$ and $1v$. Therefore, we only need to check the first $2$ occurrences of $1$ in $x$. The statement still holds because by inductive hypothesis the first and last letters of $u$ are identical.
	\end{enumerate}
\end{proof}

In the next result, we prove a lower bound on saturation functions of alternating sequences which is within a multiplicative factor of $2$ of our upper bound.

\begin{lem}\label{lem:altlower}
For all $n \ge 3$ and $s \ge 1$, we have $\xi(n,s)\ge n\lfloor s/2\rfloor+1$ if $s$ is even and $\xi(n,s)\ge n\lfloor s/2\rfloor+3$ if $s$ is odd.
\end{lem}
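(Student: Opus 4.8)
The plan is to induct on $n$, imitating the structure of the proof of part~(1) of the preceding lemma about $abab$-saturated sequences, but now tracking more carefully how many occurrences of the first letter a $u_s$-saturated sequence must contain. Fix a $u_s$-saturated sequence $x$ on $n$ letters, and let $1$ be its first letter. The key preliminary observation is a lower bound on the multiplicity of $1$ in $x$: if $1$ occurred too few times, we could append a copy of $1$ (or a short block ending in $1$) to the end of $x$ without creating a new alternation of length $s+2$ and without violating $2$-sparsity, contradicting saturation. Concretely, write $x$ by cutting at the occurrences of $1$. If $1$ occurs $m$ times, then $x = w_0 1 w_1 1 w_2 \cdots 1 w_m$ where the $w_i$ are (possibly empty) $1$-free blocks and $w_0$ is empty since $x$ starts with $1$. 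One shows that each internal block $w_i$ with $1 \le i \le m-1$ consists of a single letter: if some $w_i$ had two distinct letters $a \ne b$, then $1 a \cdots 1$ together with the later $1$'s already gives a long alternation, and a short case analysis on the parity of $s$ forces $|w_i| = 1$ (any longer internal block lets us find $u_s$ or lets us extend).

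Next I would peel off the portion of $x$ after the first occurrence of $1$. As in the earlier lemma, the block before the second $1$ and the block starting at the second $1$ have essentially disjoint behavior with respect to alternations through $1$, so each is itself saturated for a slightly shorter alternation (the available alternation length drops by the contribution already used by the leading $1$'s), and the inductive hypothesis applies to each piece on its respective number of letters. Adding the lengths of the pieces, plus the $m$ occurrences of $1$ and the singleton internal blocks, yields a total of at least $n\lfloor s/2\rfloor + 1$ in the even case. For the odd case one does the same accounting but must additionally verify the "$+3$" slack: here the parity argument shows the first and last letters differ (by the structural result quoted in the introduction, or by a direct argument), so $x$ cannot be as short as the even-type construction, and a constant amount of extra length is forced near the ends — this is where the bound improves from $+1$ to $+3$.

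The main obstacle I expect is making the "peeling" step clean for general $s$ rather than just $s = 2$. For $abab$ the two pieces $u$ and $1v$ were exactly $abab$-saturated because a single $1$ at the boundary contributes exactly one $1$ to any would-be $abab$; for larger $s$ one has to be careful that the pieces are saturated for the \emph{right} shorter alternation and that the disjointness-of-letters phenomenon (which was clean for $abab$, where $u$ and $v$ share no letters) is replaced by the correct weaker statement — letters can now be shared, but only in controlled ways, and I must ensure the induction still closes with the stated constants. A secondary nuisance is the base case $n = 3$ and the small-$s$ cases ($s = 1$ is given, $s = 2$ follows from the previous lemma), which should be checked directly. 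Once the block structure around the first letter is pinned down, the arithmetic is routine.
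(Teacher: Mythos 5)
Your plan diverges sharply from the paper's actual proof, and it contains a genuine structural error that the paper's own data table refutes. You claim that if $x=w_0 1 w_1 1 \cdots 1 w_m$ is the decomposition of a $u_s$-saturated sequence $x$ at the occurrences of its first letter $1$, then each internal block $w_i$ with $1\le i\le m-1$ has length $1$. Take the $u_3$-saturated sequence $0,1,2,1,0,1,2,3,4,3,2,3,4,5,6,5,4,5,6$ from the table in Section~\ref{s:sat_alt}: the block between the first two occurrences of $0$ is $1,2,1$, of length $3$. So the block structure you need does not hold for $s>2$, and the case analysis you sketch (``any longer internal block lets us find $u_s$ or lets us extend'') would have to rule out exactly this example, which it cannot.

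The deeper issue is the peeling step. For $s=2$ the pieces $u$ and $1v$ in $x=1u1v$ are forced to be on disjoint letter sets and are themselves $abab$-saturated, which is what makes the induction on $n$ close cleanly. You correctly note that for general $s$ the disjointness fails and letters can be shared between the pieces; what you do not supply is the ``correct weaker statement'' that would replace it. Without it, neither piece is known to be saturated for any shorter alternation, the letter count $k + (n-k)$ no longer partitions cleanly, and the inductive bookkeeping has no footing. This is not a nuisance to be finessed---it is the entire content of the lemma, and your plan leaves it open.

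The paper avoids all of this. It does not induct on $n$ at all; it counts occurrences of each letter directly. First it splits on whether some letter $a_1$ cannot be inserted anywhere without breaking $2$-sparsity (which forces $x$ to alternate $a_1$ with other letters in odd positions, and a short multiplicity argument gives the bound). In the remaining case every letter can be inserted somewhere, and saturation forces each letter to occur at least $\lfloor s/2\rfloor$ times: if some letter had fewer occurrences, inserting it would yield at most $\lfloor s/2\rfloor$ copies of it, so the longest alternation through it would have length at most $2\lfloor s/2\rfloor+1<s+2$, contradicting saturation. Summing over $n$ letters and then refining to get the additive constants $+1$ (even $s$) and $+3$ (odd $s$) finishes the proof. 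This is shorter, requires no structural decomposition, and sidesteps the disjointness problem entirely. If you want to salvage an inductive approach you would need a genuinely new decomposition lemma; as written, the plan does not establish the bound.
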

\begin{proof}
We first consider the case when there exists some letter $a_1$ such that adding it to a $u_s$-saturated sequence $x$ violates $2$-sparsity. Clearly $x$ has odd length and every odd-positioned letter of $x$ is $a_1$. Letter $a_1$ has at least $2$ more occurrences than any other letter, since otherwise all even-positioned letters are identical and $x$ has only $2$ distinct letters. Every even-positioned letter can be appended to $x$ without violating $2$-sparsity. If $s$ is even, then every even-positioned letter has at least $s/2$ occurrences and $a_1$ has at least $(s+4)/2$ occurrences. This implies the length of $x$ is at least $2+ns/2$. If $s$ is odd, then every even-positioned letter has at least $(s+1)/2$ occurrences and $a_1$ has at least $(s+5)/2$ occurrences, so $x$ has length at least $ns/2+(n+4)/2$.

    Now, we show that if every letter in the alphabet can be added to a $u_s$-saturated sequence $x$ without violating $2$-sparsity, then each letter appears at least $\lfloor s/2\rfloor$ times. Suppose that some letter $i$ appears at most $\lfloor s/2\rfloor-1$ times. If we add a new $i$, it has at most $\lfloor s/2\rfloor$ occurrences. The longest alternating subsequence containing $i$ has length no more than $2\left(\lfloor s/2\rfloor\right)+1\le s+1<s+2$, a contradiction.
    
    If $s$ is even, suppose that every letter appears only $\lfloor s/2\rfloor$ times. Only one letter appears $(s+2)/2$ times after adding a letter while both letters in $u_s$ appear $(s+2)/2$ times. So, some letter appears $s/2+1$ times and $x$ has length at least $ns/2+1$. If $s$ is odd and if every letter appears at least $\lfloor (s+2)/2\rfloor$ times, then the length of $x$ is at least $n\lfloor (s+2)/2\rfloor\ge n\lfloor s/2\rfloor+3$. If some letter $a_1$ appears only $\lfloor s/2\rfloor$ times, then it requires some letter $a_2$ to appear at least $(s+3)/2=\lfloor s/2\rfloor+2$ times, which also requires some letter $a_3$ to appear at least $(s+1)/2=\lfloor s/2 \rfloor+1$ times. That is, the length of $x$ is at least $n\lfloor s/2\rfloor+3$.
\end{proof}

In the next several lemmas, we prove a number of structural results about sequences that are saturated for alternations. We start with a useful definition.

\begin{definition}
	In a sequence $x$ on $n$ letters that is $u_s$-saturated, distinct letters $i$ and $j$ are called \textbf{friends} if the longest alternating subsequence formed by $i$ and $j$ has length $s$ or $s+1$.
\end{definition}

\begin{lem}\label{lem:friends}
	In a sequence $x$ on $n$ letters that is $u_s$-saturated, there are no adjacent letters that are not friends.
\end{lem}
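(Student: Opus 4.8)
The plan is to argue by contradiction. Suppose $x_k=i$ and $x_{k+1}=j$ are consecutive letters of $x$ but $i$ and $j$ are not friends. Since $x$ is $u_s$-free, the longest alternating subsequence formed by any two fixed letters has length at most $s+1$, so "not friends" forces the longest alternating subsequence formed by $i$ and $j$ in $x$ to have length at most $s-1$. It then suffices to exhibit a letter and a position at which inserting that letter into $x$ keeps $x$ both $2$-sparse and $u_s$-free, contradicting $u_s$-saturation.

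To locate the insertion, slide to the right end of the maximal block of consecutive positions that alternates in $\{i,j\}$ and contains $k,k+1$: let $m\ge k+1$ be largest such that $x_kx_{k+1}\cdots x_m$ is an alternating subsequence formed by $i$ and $j$, and set $p=x_{m-1}$ and $q=x_m$, so that $\{p,q\}=\{i,j\}$. Insert a fresh copy of $p$ immediately after position $m$, obtaining $x'$. This preserves $2$-sparsity: either $m$ is the last position of $x$, in which case we are appending and $x_m=q\ne p$; or, by maximality of $m$, $x_{m+1}\notin\{i,j\}$ (it differs from $x_m$ by sparsity, and it equals neither $i$ nor $j$, else the block would extend), so the fresh $p$ is flanked by $q$ and a letter outside $\{i,j\}$.

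Now suppose for contradiction that $x'$ contains a copy of $u_s$. Since $x$ is $u_s$-free, this copy must use the fresh $p$, so it is an alternating subsequence of length $s+2$ formed by $p$ and some letter $e\ne p$. If $e=q$, delete the fresh $p$ from this subsequence, together with one of its two neighbours in the subsequence in case it is an interior term; what remains is an alternating subsequence formed by $i$ and $j$ of length at least $s$ lying inside $x$, so $i$ and $j$ are friends --- contradiction. If instead $e\notin\{i,j\}$, then the term immediately preceding the fresh $p$ in the subsequence (if any) has value $e$ and hence occurs at an index at most $m-2$, while the term immediately following it (if any) occurs at an index at least $m+1$; so replacing the fresh $p$ by the genuine occurrence $x_{m-1}=p$ yields an alternating subsequence of the same length $s+2$ lying entirely in $x$, contradicting $u_s$-freeness. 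Either way no copy of $u_s$ is created, so $x$ is not $u_s$-saturated --- contradiction. Hence $i$ and $j$ are friends.

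I expect the delicate parts to be purely bookkeeping: the $2$-sparsity check for the insertion --- which is exactly why one first walks to the end of the $\{i,j\}$-block instead of inserting next to positions $k,k+1$ --- and the index inequalities in the rerouting step. The one conceptual subtlety is that inserting a single letter can lengthen an alternation with a third letter $e$ by as much as two, so crude length bounds do not suffice; instead one shows that any hypothetical new copy of $u_s$ either lives entirely in the letters $\{i,j\}$, where the "not friends" hypothesis forbids it, or can be rerouted through the existing occurrence $x_{m-1}$ so as to already appear in $x$, contradicting $u_s$-freeness.
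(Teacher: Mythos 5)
Your proof is correct and follows essentially the same approach as the paper's: insert a fresh copy of the first letter of the adjacent pair immediately after the second (after sliding right so that $2$-sparsity is preserved), rule out a new copy of $u_s$ on $\{i,j\}$ because such an insertion can lengthen the best $\{i,j\}$-alternation by at most $2$, and in the remaining case reroute the new copy through the genuine $p=x_{m-1}$ to find a copy of $u_s$ already in $x$. Your write-up makes explicit the ``slide to the end of the $\{i,j\}$-block'' step that the paper compresses into a WLOG, but the argument is the same.
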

\begin{proof}
	Suppose that in $x$ that adjacent letters $j$ and $k$ are not friends. Without loss of generality assume that $j$ occurs first in the adjacent pair and that either $k$ is the last letter of $x$ or the letter after $k$ is not $j$. We can then insert or append a $j$ after $k$ to obtain a new sequence $x'$, and a subsequence $u_s'$ of $x'$ is isomorphic to $u_s$. However $u_s'$ cannot consist of $j$ and $k$ because the longest alternating subsequence on $j$ and $k$ in $x'$ has length no more than $s-1+2=s+1$. Therefore, $u_s'$ consists of $j$ and $j'\ne k$, and since the $k$ between the added $j$ and its preceding $j$ can be ignored, $x$ already contains $u_s$.
\end{proof}

\begin{lem}
	\label{lemma:alt}
	Suppose that in a $u_s$-saturated sequence $x$ on $n$ letters, some letter $a_1$ is followed by a letter $a_2$, which is not followed by $a_1$. Then adding an $a_1$ right after $a_2$ creates a length-$(s+2)$ alternating subsequence composed of $a_1$ and $a_2$.
\end{lem}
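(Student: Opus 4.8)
The plan is to apply $u_s$-saturation directly. First I would show that the prescribed insertion is legal, so it must create a copy of $u_s$; then I would argue that this copy is forced to be an alternation on $a_1$ and $a_2$, by rerouting through the original occurrence of $a_1$.

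To set up notation, fix a position $i$ with $x_i=a_1$ and $x_{i+1}=a_2$, and let $x'$ be the sequence obtained from $x$ by inserting a new copy of $a_1$ immediately after position $i+1$. The hypothesis that this $a_2$ is not followed by $a_1$ (that is, $x_{i+2}\ne a_1$, or $x_{i+1}$ is the last letter of $x$) guarantees that the inserted $a_1$ has $a_2$ to its left and $x_{i+2}\ne a_1$ to its right, so $x'$ is still $2$-sparse. Since $x$ is $u_s$-saturated, $x'$ must then contain a subsequence $w$ isomorphic to $u_s$; and since $x$ itself is $u_s$-free, $w$ must use the inserted $a_1$. In particular $w$ is an alternating subsequence of length $s+2$ on the letter $a_1$ and exactly one other letter $c\ne a_1$, with the inserted $a_1$ as one of its terms.

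It remains to rule out $c\ne a_2$. Suppose $c\notin\{a_1,a_2\}$. Every term of $w$ occurring before the inserted $a_1$ sits at a position $\le i+1$ in $x$; but positions $i$ and $i+1$ carry $a_1$ and $a_2$, and $c$ is neither, so in fact every such term sits at a position $\le i-1$. Every term of $w$ occurring after the inserted $a_1$ sits at a position $\ge i+2$. Hence, replacing the inserted $a_1$ in $w$ by the original $a_1$ at position $i$ produces a subsequence with strictly increasing positions lying entirely inside $x$, with the same letters as $w$ — that is, an alternating subsequence of $x$ of length $s+2$ on $\{a_1,c\}$, contradicting that $x$ is $u_s$-free. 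Therefore $c=a_2$, and $w$ is the desired length-$(s+2)$ alternating subsequence composed of $a_1$ and $a_2$.

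The proof is short, and I do not expect a real obstacle: the only points needing care are (i) checking that the insertion does not break $2$-sparsity, which is precisely where the ``not followed by $a_1$'' hypothesis enters, and (ii) the position bookkeeping in the rerouting step, ensuring the swap of the new $a_1$ for the old one keeps positions strictly increasing — the boundary cases being when the inserted $a_1$ is the first or the last term of $w$, both of which the position bounds above handle uniformly.
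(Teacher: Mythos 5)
Your proof is correct and takes essentially the same approach as the paper's: you reroute the alternating subsequence on $a_1$ and $c\ne a_2$ by swapping the inserted $a_1$ for the original $a_1$ at position $i$, showing the copy of $u_s$ already existed in $x$. The paper's version states this more tersely (it observes that the alternation uses at most one of $x_i$ and the inserted $a_1$, then concludes), while you make the position bookkeeping explicit; the content is the same.
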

\begin{proof}
If the addition of $a_1$ right after $a_2$ does not create a length-$(s+2)$ alternating subsequence composed of $a_1$ and $a_2$, then it creates a length-$(s+2)$ alternating subsequence composed of $a_1$ and $a_3$ for some letter $a_3\ne a_2$. Moreover this subsequence cannot contain both the original and the added $a_1$, so $x$ already contained $u_s$ before adding $a_1$.
\end{proof}

In the next lemma and its resulting corollaries, we investigate the structure of $u_s$-saturated sequences between $2$ occurrences of the same letter that have no other occurrences of the letter between them. 

\begin{lem}
	\label{lemma:disjoint}
	Suppose that in a $u_s$-saturated sequence $x=(x_1,x_2,\dots)$ on $n$ letters, $x_{i_1}=x_{i_2}=a_1$ are identical letters such that $i_1 < i_2$, and there is no other $a_1$ between $x_{i_1}$ and $x_{i_2}$. Then the subsequence $u$ between $x_{i_1}$ and $x_{i_2}$ cannot be written as $u=vw$ where $v$ and $w$ are nonempty sequences such that their letters are disjoint.
\end{lem}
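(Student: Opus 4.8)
The plan is to argue by contradiction: suppose the subsequence $u$ between the two consecutive occurrences $x_{i_1} = x_{i_2} = a_1$ splits as $u = vw$ with $v,w$ nonempty and their letter sets disjoint. The key idea is that the ``boundary'' between $v$ and $w$ is a place where we should be able to insert a letter, and the disjointness of $v$ and $w$ will prevent that insertion from creating the forbidden alternation $u_s$ using only the two letters it involves, forcing a copy of $u_s$ to have already been present. First I would set up notation: write $x = (\dots)\, a_1\, v\, w\, a_1\, (\dots)$, let $p$ be the last letter of $v$ and $q$ the first letter of $w$, so that $p$ and $q$ are adjacent in $x$ and $p \ne q$ by disjointness. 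By Lemma~\ref{lem:friends}, $p$ and $q$ are friends, so the longest alternating subsequence on $\{p,q\}$ in $x$ has length $s$ or $s+1$.

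The main step is to identify a letter that can be legitimately inserted between $p$ and $q$. Consider inserting a copy of $q$ immediately before $q$ (equivalently, between $p$ and $q$); this does not violate $2$-sparsity since the letter on the left is $p \ne q$, and since $u_s$-saturation is about any addition, this must create a new copy of $u_s$. By an argument like that of Lemma~\ref{lemma:alt}, any newly created $u_s$ must essentially use the inserted $q$ together with alternations passing through the $v$-side, but every occurrence of $q$ in $u$ (if any) lies in $w$, whose letters are disjoint from $v$'s; I need to trace where the $s+2$ alternating letters come from. The cleanest route: the new copy of $u_s$ is an alternation on $\{q, c\}$ for some letter $c$, and it must use the inserted $q$ (else $x$ already contained it). If $c$ occurs in $v$, then since the inserted $q$ sits at the $v$/$w$ boundary and no $q$ occurs in $v$, any $c q c q \cdots$ pattern through the inserted $q$ can have at most one $q$ to the right of all but the last $c$ — so it is really an alternation in the original $x$ extended by at most the single inserted $q$, giving a pre-existing alternation on $\{q,c\}$ of length $\ge s+1$; but then $q$ and $c$ are ``almost non-friends,'' and one more natural insertion (of $c$ or $q$ near the relevant end) already forces $u_s$, contradiction — here I would lean on Lemma~\ref{lem:friends} / the friends bookkeeping to push the length from $s+1$ up to $s+2$ using the disjointness to relocate occurrences. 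If instead $c$ occurs only in $w$ or outside $u$, then the inserted $q$ is redundant because there is a genuine $q$ right after it inside $w$ (or the first $q$ of $w$ plays its role), so the copy of $u_s$ already existed in $x$ — again a contradiction.

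The step I expect to be the main obstacle is the case analysis on where the partner letter $c$ of the new alternation lives, and in particular making the ``relocation'' argument fully rigorous: showing that when $c \in v$, the existence of a length-$(s+1)$ alternation on $\{q,c\}$ in the original $x$, combined with the disjointness of $v$ and $w$ and the presence of the two bracketing $a_1$'s, lets us exhibit a length-$(s+2)$ alternation or apply saturation at a different insertion point to derive the contradiction. I would handle this by choosing the insertion point carefully — possibly inserting $p$ just after $q$ instead of $q$ just before $q$, and playing the two symmetric insertions against each other — and by using that the two consecutive $a_1$'s give extra room to extend alternations on the $v$-side without interference from $w$. The parity of $s$ may enter (as it does in the earlier lemmas), so I would keep the "length $s$ or $s+1$" dichotomy from the definition of friends explicit throughout rather than collapsing it prematurely.
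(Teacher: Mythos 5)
Your proposal has a fatal flaw at the very first concrete step: inserting a copy of $q$ immediately before the existing $q$ (the first letter of $w$) produces two adjacent $q$'s and therefore \emph{violates $2$-sparsity}. You checked only the neighbor on the left ($p$), but the neighbor on the right is $q$ itself. Since $u_s$-saturation only requires that additions \emph{respecting $2$-sparsity} create a copy of $u_s$, this insertion tells you nothing, and the whole case analysis built on it collapses. The same objection would apply to inserting $p$ right after $p$.

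You correctly identified the $v/w$ boundary as the place to insert, but the right letter to insert there is $a_1$, not $q$ or $p$. The hypothesis that there is no $a_1$ strictly between $x_{i_1}$ and $x_{i_2}$ is exactly what guarantees that $a_1$ differs from both $p$ (the last letter of $v$) and $q$ (the first letter of $w$), so inserting $a_1$ between them is legal. The paper's proof does precisely this: the new copy of $u_s$ must use the inserted $a_1$, so it is an alternation on $\{a_1,c\}$ for some $c$; since the letters of $v$ and $w$ are disjoint, $c$ is absent from at least one of them, say $v$; but then there is no $a_1$ and no $c$ strictly between $x_{i_1}$ and the inserted $a_1$, so the inserted $a_1$ can be replaced by $x_{i_1}$ (symmetrically, by $x_{i_2}$ if $c\notin w$), exhibiting a copy of $u_s$ already in $x$ --- a contradiction. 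Note that this substitution argument uses the two bracketing $a_1$'s in an essential way, which your proposal never exploits. You should also notice that Lemma~\ref{lem:friends} and the parity bookkeeping you anticipated are not needed at all here; the argument is much shorter than you expected once the correct insertion is chosen.
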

\begin{proof}
	Suppose $u$ can be written as $u=vw$ where $v$ and $w$ are nonempty sequences such that their letters are disjoint. Inserting an $a_1$ between $v$ and $w$ creates a copy of $u_s$, but the letters of $v$ and $w$ are disjoint, so $x$ already contained $u_s$ before inserting $a_1$.
\end{proof}

\begin{cor}
	\label{corollary:another}
	Suppose that in a $u_s$-saturated sequence $x=(x_1,x_2,\dots)$ on $n$ letters, $x_{i_1}=x_{i_2}=a_1$ are identical letters such that $i_1+3 \le i_2$, and there is no other $a_1$ between $x_{i_1}$ and $x_{i_2}$. Then $i_1+4\le i_2$ and there is another occurrence of $a_2=x_{i_1+1}$ at some position in $[i_1+3,i_2-1]$.
\end{cor}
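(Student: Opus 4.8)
The plan is to read the whole statement off of Lemma~\ref{lemma:disjoint} together with the $2$-sparsity of $x$ (recall $u_s$ has $r=2$ distinct letters, so every $u_s$-saturated sequence is $2$-sparse). Write $u = x_{i_1+1}x_{i_1+2}\cdots x_{i_2-1}$ for the block strictly between the two prescribed copies of $a_1$; by hypothesis $a_1$ does not occur in $u$ and $|u| = i_2 - i_1 - 1 \ge 2$. Put $a_2 = x_{i_1+1}$.

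The key observation is that the entire conclusion is equivalent to the single assertion that $a_2$ occurs at least twice in $u$. Indeed, $2$-sparsity gives $x_{i_1+2} \ne x_{i_1+1} = a_2$, so a second occurrence of $a_2$ in $u$ must lie at a position $p$ with $i_1 + 3 \le p \le i_2 - 1$; the existence of such a $p$ immediately yields both $i_2 - 1 \ge i_1 + 3$ (that is, $i_1 + 4 \le i_2$) and the claimed occurrence of $a_2$ in the interval $[i_1+3, i_2-1]$.

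So it remains to show that $a_2$ occurs at least twice in $u$, which I would do by contradiction via Lemma~\ref{lemma:disjoint}. If $a_2$ occurred in $u$ only at position $i_1+1$, set $v = x_{i_1+1}$ (the single letter $a_2$) and $w = x_{i_1+2}\cdots x_{i_2-1}$. Then $u = vw$, both $v$ and $w$ are nonempty ($w$ because $|u| \ge 2$), and the letter sets of $v$ and $w$ are disjoint since $v$ uses only $a_2$ while $a_2 \notin w$ by assumption --- contradicting Lemma~\ref{lemma:disjoint}. Hence $a_2$ appears at least twice in $u$, and the corollary follows. There is no real obstacle here; the one point worth flagging is that ``$a_2$ appears twice in $u$'' already subsumes the bound $i_1+4\le i_2$, so no separate treatment of the case $i_2 = i_1+3$ is required (though that case could also be dismissed directly, since then $u$ would consist of two distinct letters and hence split into disjoint nonempty parts, again contradicting Lemma~\ref{lemma:disjoint}).
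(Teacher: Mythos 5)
Your argument is correct and matches the paper's proof exactly: both decompose the subsequence $u$ between the two occurrences of $a_1$ as $u = a_2 w$ and apply Lemma~\ref{lemma:disjoint} to conclude that $a_2$ must recur in $w$, with $2$-sparsity then pushing the repeat to position at least $i_1+3$. Your write-up simply spells out the details that the paper's one-line proof leaves implicit.
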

\begin{proof}
Express the subsequence between $x_{i_1}$ and $x_{i_2}$ as $a_2w$ where $a_2$ is a letter and $w$ is a sequence. By Lemma~\ref{lemma:disjoint}, $w$ contains $a_2$, which, by sparsity, is not the first letter of of $w$.
\end{proof}

\begin{cor}
	Suppose that in a $u_s$-saturated sequence $x=(x_1,x_2,\dots)$ on $n$ letters, $x_{i_1}=a_1$ is the first occurrence of $a_1$ and $x_{i_1+1}=a_2\ne a_1$. Then either $x_{i_1+1}$ is the first occurrence of $a_2$ or $x_{i_1-1}=a_2$.
\end{cor}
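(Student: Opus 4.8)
The plan is to derive this quickly from Lemma~\ref{lemma:disjoint}, much as Corollary~\ref{corollary:another} was derived. First I would dispose of the trivial case: if $x_{i_1+1}$ is already the first occurrence of $a_2$, there is nothing to show. So assume $a_2$ appears among $x_1,\dots,x_{i_1}$; since $x_{i_1}=a_1\neq a_2$, it in fact appears among $x_1,\dots,x_{i_1-1}$, and I would let $x_{j_1}=a_2$ be its last such occurrence and set $j_2:=i_1+1$. Then $x_{j_1}=x_{j_2}=a_2$, with no $a_2$ strictly between positions $j_1$ and $j_2$, and $j_1\le i_1-1$. The goal is to prove $j_1=i_1-1$, which is exactly the asserted $x_{i_1-1}=a_2$.

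Suppose instead that $j_1\le i_1-2$. The block lying strictly between $x_{j_1}$ and $x_{j_2}$ is $u:=x_{j_1+1}\cdots x_{j_2-1}=x_{j_1+1}\cdots x_{i_1}$, and I would split it as $u=vw$ with $v:=x_{j_1+1}\cdots x_{i_1-1}$ and $w:=x_{i_1}=a_1$. Here $v$ is nonempty because $j_1+1\le i_1-1$, and $w$ is a single letter. The point of the hypothesis that $x_{i_1}$ is the \emph{first} occurrence of $a_1$ is that $a_1$ then occurs nowhere in $x_1,\dots,x_{i_1-1}$, hence nowhere in $v$; so $v$ and $w$ have disjoint alphabets. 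This contradicts Lemma~\ref{lemma:disjoint} applied to the pair $x_{j_1}=x_{j_2}=a_2$. Hence $j_1=i_1-1$, as desired.

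I do not anticipate any real obstacle: the entire content is the observation that a first occurrence of $a_1$ makes the letter $a_1$ available as the disjoint tail forbidden by Lemma~\ref{lemma:disjoint}, and the only bookkeeping is to note that when $j_1\le i_1-2$ both pieces of the split $u=vw$ are genuinely nonempty.
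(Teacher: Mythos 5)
Your proof is correct, and it reaches the conclusion by a slightly more direct route than the paper. The paper argues by taking $x_{i_0}$ to be the previous occurrence of $a_2$, then applying Corollary~\ref{corollary:another} to the \emph{reverse} of $x$ (with the pair of $a_2$'s at positions $i_0$ and $i_1+1$) to conclude that there is another $a_1$ strictly between them, contradicting that $x_{i_1}$ is the first occurrence of $a_1$. You instead skip the corollary and the reversal altogether, applying Lemma~\ref{lemma:disjoint} directly to the pair $x_{j_1}=x_{j_2}=a_2$ with the split $u=vw$, $w=x_{i_1}=a_1$; the firstness of $a_1$ guarantees the disjointness of alphabets, giving the contradiction immediately. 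Since Corollary~\ref{corollary:another} is itself a one-line consequence of Lemma~\ref{lemma:disjoint}, the two proofs have the same engine, but yours exposes the disjoint split explicitly rather than invoking the corollary on a reversed sequence, which makes the role of the ``first occurrence'' hypothesis slightly more transparent. One minor bookkeeping point worth stating explicitly: the case $j_1=i_1-1$ is exactly the conclusion $x_{i_1-1}=a_2$, so the dichotomy in the statement is covered cleanly by your case split on whether $j_1\le i_1-2$.
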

\begin{proof}
	Suppose $x_{i_1+1}$ is not the first occurrence of $a_2$ and $x_{i_1-1}\ne a_2$. Then, assume the previous occurrence of $a_2$ is $x_{i_0}$, so we have $i_0+3\le i_1+1$. By applying Corollary~\ref{corollary:another} to the reverse of $x$, there is another occurrence of $a_1$ between $x_{i_0}$ and $x_{i_1}$, i.e., $x_{i_1}$ is not the first occurrence of $a_1$, contradicting our assumption.
\end{proof}

In the next two lemmas, we focus on the first and last letters of $u_s$-saturated sequences. In particular, we prove that these letters are the same when $s$ is even, but different when $s$ is odd.

\begin{lem}
\label{lemma:even-same}
If $s$ is even, then the first and last letters of any $u_s$-saturated sequence are the same.
\end{lem}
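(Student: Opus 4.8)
The plan is to argue by contradiction: suppose $x$ is a $u_s$-saturated sequence on $n$ letters whose first letter $a$ and last letter $b$ are different, where $s$ is even. The idea is to exploit the saturation condition by appending a copy of $a$ to the end of $x$. Since $x$ is $u_s$-saturated and $b \ne a$, appending $a$ does not violate $2$-sparsity, so it must create a new copy of $u_s$. That copy must use the newly appended $a$ (otherwise $x$ already contained $u_s$), so it is an alternating subsequence of length $s+2$ ending in the appended $a$; call the other letter in it $c$. This shows that in $x$ itself, the longest alternating subsequence starting and ending appropriately with $a$ and $c$ already has length $s+1$, and in fact, because $s$ is even, length $s+1$ means the alternation of $a$ and $c$ in $x$ has the form $c a c a \dots c a$ or begins and ends in a way that forces a parity statement about where the first letter sits.

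The key step is a parity analysis. When $s$ is even, $u_s = abab\cdots ab$ has length $s+2$, which is even, so its first and last letters are distinct. An alternating subsequence of length $s+1$ (odd) has equal first and last letter. So the appended-$a$ argument gives: there is a letter $c$ such that $x$ contains an alternating subsequence on $\{a,c\}$ of length $s+1$ whose last letter is $c$ (the letter just before the appended $a$ in the new copy), hence whose first letter is also $c$. Symmetrically, I would run the mirror-image argument by \emph{prepending} a copy of $b$ to the front of $x$: this creates a length-$(s+2)$ alternation on $\{b,d\}$ for some $d$, beginning with the prepended $b$, so $x$ contains a length-$(s+1)$ alternation on $\{b,d\}$ beginning and ending with $d$. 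Now I combine these with the fact that $x$ \emph{does} contain an alternation of length $s+1$ already (from either side) but \emph{not} length $s+2$ — this is where I expect to use Lemma~\ref{lem:friends} (no adjacent non-friends) and Lemma~\ref{lemma:alt} to pin down the behavior of the first letter $a$ relative to $x_2$ and of the last letter $b$ relative to its predecessor.

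Concretely, the cleanest route is probably: first handle the degenerate case where adding some letter violates $2$-sparsity (so every odd position is a single repeated letter), which was essentially analyzed in the proof of Lemma~\ref{lem:altlower} and forces strong structure; in that case one checks directly that the shared odd-position letter is both first and last, so first $=$ last, contradicting $a \ne b$. Otherwise every letter can be appended without violating $2$-sparsity, and the append-$a$ / prepend-$b$ arguments above apply cleanly. I would then show that the length-$(s+1)$ alternation on $\{a,c\}$ ending in $c$, together with $a$ being the very first letter of $x$, forces $a$ to occur in that alternation exactly $\frac{s}{2}+1$ times and $c$ exactly $\frac{s}{2}$ times with the first occurrence of $a$ in $x$ serving as the start — and appending $a$ bumps $a$'s count in the alternation to $\frac{s}{2}+1$ while $c$ stays at $\frac{s}{2}$, so to get a length-$(s+2)$ alternation $acac\cdots$ we would need one more $c$, contradiction — \emph{unless} the alternation actually started with $c$, i.e., there is a $c$ before the first $a$, which is impossible since $a = x_1$. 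Pushing this through shows the append must instead have extended an alternation whose pattern forces the symbol just before the appended $a$, namely $x_{|x|} = b$, to equal $a$, the contradiction. The main obstacle is making the parity bookkeeping of "where does the alternation start, and does appending add to the $a$-count or create the needed imbalance" fully rigorous; Lemma~\ref{lemma:alt} is the right tool, since it says appending $a$ after a letter $a_2$ not followed by $a_1$ creates the length-$(s+2)$ alternation \emph{on those two letters}, which nails down $c = b$ and then the parity of $s$ being even yields first letter $=$ last letter directly.
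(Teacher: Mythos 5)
Your opening move is the same as the paper's: append $a = x_1$ to the end, observe that the new copy $u'$ of $u_s$ must end with the appended $a$, and use the parity of $s$ (so $s+2$ is even) to conclude that the first letter of $u'$ is some $c \neq a$. The paper finishes in one more step: $u'$ with its last (appended) letter removed is a length-$(s+1)$ alternating subsequence of $x$ on $\{a,c\}$ that begins with $c$, and since $x_1 = a$ sits strictly before that $c$, prepending $x_1$ yields a length-$(s+2)$ alternation inside $x$, so $x$ already contained $u_s$ — contradiction. That is the whole proof.

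You never quite make that closing move, and the bookkeeping you write down in its place is internally inconsistent. You correctly say the $s+1$-length alternation in $x$ "ends in $c$, hence... first letter is also $c$", but two sentences later you assert that $a$ occurs $\frac{s}{2}+1$ times and $c$ occurs $\frac{s}{2}$ times "with the first occurrence of $a$ in $x$ serving as the start" — that describes an alternation beginning with $a$, contradicting what you just deduced, and the multiplicities don't match a length-$(s+1)$ string starting and ending with $c$ (which has $\frac{s}{2}+1$ copies of $c$ and $\frac{s}{2}$ copies of $a$). The parenthetical "unless the alternation actually started with $c$... impossible since $a = x_1$" is also backwards: the alternation \emph{does} start with a $c$, and that $c$ being at some position $\ge 2$ is perfectly consistent with $x_1 = a$ — indeed that is exactly what lets you prepend $x_1$ and finish. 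Beyond this, the extra scaffolding you bring in — the mirror prepend-$b$ argument, the "degenerate case" where appending violates $2$-sparsity, and invocations of Lemma~\ref{lem:friends} and Lemma~\ref{lemma:alt} — is unnecessary and distracts from the short argument; none of it is needed once you notice that $x_1$ precedes the copy of $c$ that starts $u'$.
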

\begin{proof}
Suppose that $x$ is $u_s$-saturated and the first letter of $x$ is $a$. For contradiction assume that the last letter of $x$ is not $a$, so we append an $a$ to $x$ to obtain a new sequence $x'$, which has an alternating subsequence $u'$ of length $s+2$. Subsequence $u'$ must include the appended $a$, and the first letter of $u'$ is not $a$ because $s$ is even. Thus, the first letter of $x$ concatenated with $u'$ except its last letter is an alternating subsequence of $x$ of length $s+2$. Therefore, $x$ contains $u_s$, a contradiction.
\end{proof}

\begin{lem}
If $s$ is odd, then the first and last letters of any $u_s$-saturated sequence are different.
\end{lem}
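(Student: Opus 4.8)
The plan is to argue by contradiction, reusing the mechanism of the proof of Lemma~\ref{lemma:even-same} but turning the parity of $s$ the other way. Suppose $S=(x_1,x_2,\dots,x_m)$ is $u_s$-saturated and its first and last letters agree; call this common letter $a$, so $x_1=x_m=a$. First I would dispose of the degenerate situation: on an alphabet of size at least $2$ a one-letter sequence is not $u_s$-saturated (a second letter can be appended without breaking $2$-sparsity or creating $u_s$), so $m\ge 2$, and then $x_{m-1}=x_m$ would violate $2$-sparsity, forcing $m\ge 3$. Set $b:=x_{m-1}$; by $2$-sparsity $b\ne a$.

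The key move is that we cannot append $a$ itself — that would break $2$-sparsity — so instead I would append the penultimate letter $b$ and invoke Lemma~\ref{lemma:alt} with $a_1=b$ and $a_2=a$: the $b$ at position $m-1$ is immediately followed by the final $a$, and that $a$ is not followed by $b$ (it is not followed by anything). Hence appending $b$ to $S$ yields $S'=Sb$ containing a length-$(s+2)$ alternating subsequence $u'$ on the letters $a$ and $b$. Since $S$ is $u_s$-free, $u'$ must use the appended $b$, which is the last term of $S'$, so $u'$ ends with $b$; and since $s$ is odd, $u'$ has odd length $s+2$, so an alternating sequence ending in $b$ must also begin with $b$. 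Write $u'=b\,a\,b\,a\cdots b$.

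To finish, delete the appended $b$ from $u'$: what remains is $w=b\,a\,b\cdots a$, a subsequence of $S$ of length $s+1$ beginning with $b$ and ending with $a$. The first letter of $S$ is the $a$ at position $1$, which lies strictly before the first term of $w$ (that term is a $b$, hence at a position $\ge 2$). Prepending this $a$ to $w$ produces a subsequence of $S$ of the form $a\,b\,a\,b\cdots a$: alternating, of length $s+2$, beginning and ending with $a$ — a copy of $u_s$. This contradicts the $u_s$-freeness of $S$, which completes the argument.

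The only subtle point, and the reason the proof is not perfectly symmetric to the even case, is the choice to append the penultimate letter rather than $a$, together with the observation that the odd length of $u_s$ forces the freshly created copy to begin with $b$ — exactly what allows the true leading $a$ of $S$ to be spliced in. If one wishes to avoid citing Lemma~\ref{lemma:alt}, the same conclusion follows by hand: appending $b$ creates some copy $u'$ of $u_s$ through the appended $b$, and if the other letter of $u'$ is not $a$ one instead extends $w$ \emph{forward} by the $b$ at position $m-1$ — which lies after every term of $w$, since the letters of $w$ avoid positions $m-1$ and $m$ — again yielding a copy of $u_s$ inside $S$.
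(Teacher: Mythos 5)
Your proof is correct and follows essentially the same route as the paper's: append the penultimate letter $b$, use the odd parity of $s+2$ to conclude the induced copy of $u_s$ begins with $b$, discard the appended $b$, and splice in the leading $a$. You are in fact slightly more careful than the paper in one respect: by explicitly invoking Lemma~\ref{lemma:alt} (or the by-hand fallback you sketch) you justify that the induced alternation uses precisely the letters $a$ and $b$, a point the paper's proof leaves implicit even though it is needed for the prepended $a$ to extend the remaining $s+1$ letters into a valid alternation.
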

\begin{proof}
    Similarly to the proof of Lemma~\ref{lemma:even-same}, suppose $x$ is $u_s$-saturated with some odd $s$. Let the last and second to last letters of $x$ be $a$ and $b$, respectively. Assume for contradiction that the first letter of $x$ is $a$. Append $b$ to $x$ to obtain a new sequence $x'$, which contains an alternating subsequence $u'$ of length $s+2$. This is an alternating sequence of $a$ and $b$, because otherwise, we could turn this alternating sequence into an alternating sequence of length $s+2$ in $x$ by replacing the added $b$ by the $b$ at the second to last position on $x$. Because $s$ is odd, the first letter of $u'$ is $b$. The first letter of $x$ and the first $s+1$ letters of $u'$ form an alternating subsequence of length $s+2$, so $x$ contains $u_s$, a contradiction.
\end{proof}

\begin{lem}
    For $n\ne 2$ suppose $x$ is a $u_3$-saturated sequence and has some maximal contiguous alternating subsequence $aba$. Then $a$ has at least three occurrences in $x$.
\end{lem}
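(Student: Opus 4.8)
The plan is to argue by contradiction. Assume $x$ is $u_3$-saturated, that $x$ has a maximal contiguous alternating subsequence $aba$ occupying positions $i,i+1,i+2$ (so $x_i=x_{i+2}=a$, $x_{i+1}=b$), and, for contradiction, that $a$ occurs only at positions $i$ and $i+2$. We may assume $n\ge 3$: for $n\le 2$ the statement is vacuous, since a $u_3$-saturated sequence on at most two letters has no maximal contiguous alternating subsequence of the form $aba$ (on one letter this is immediate; on two letters the sequence must be $abab$ or $baba$, whose length-$3$ alternating blocks are never maximal). I would first record some easy facts: $|x|\ge 4$ (otherwise appending any letter different from the last letter of $x$ leaves $x$ both $2$-sparse and $u_3$-free, contradicting saturation); and, by maximality of the block together with $2$-sparsity, $x_{i-1}\notin\{a,b\}$ whenever $i\ge 2$, and $x_{i+3}\notin\{a,b\}$ whenever position $i+3$ exists. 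The key observation, used repeatedly, is that $x$ cannot have a $b$ both at some position $<i$ and at some position $>i+2$: together with $x_i=a,\,x_{i+1}=b,\,x_{i+2}=a$ these would form a length-$5$ alternation $babab$, i.e.\ a copy of $u_3$.

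I would then split on whether the block $aba$ is a suffix of $x$. \emph{Case 1: it is not a suffix.} Then position $i+3$ exists and $x_{i+3}\ne b$, so the $b$ at position $i+1$ is followed by the $a$ at position $i+2$, which is not followed by $b$; by Lemma~\ref{lemma:alt} (with $a_1=b$, $a_2=a$), inserting a $b$ immediately after position $i+2$ creates a length-$5$ alternation of $a$ and $b$. Since $a$ still occurs only twice after this insertion, that alternation must be $babab$, and reading off its first term forces a $b$ at some position $<i$ of $x$. By the key observation, $x$ then has no $b$ at any position $>i+2$. Now append an $a$ to the end of $x$; this is $2$-sparse (the last letter of $x$ sits at a position $>i+2$ and hence is not $a$), so saturation forces a copy of $u_3$, which must use the appended $a$; being the last letter, the appended $a$ is the last term of the length-$5$ alternation, so the alternation is $ababa$ with its first two $a$'s at positions $i$ and $i+2$ and its fourth term a $b$ at a position $>i+2$ of $x$ --- contradicting that no such $b$ exists.

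\emph{Case 2: the block $aba$ is a suffix of $x$,} so $|x|=i+2$ and every $b$ lies at a position $\le i+1$. If $b$ occurs only once in $x$ (necessarily at position $i+1$), then appending $b$ to $x$ is $2$-sparse but creates no copy of $u_3$: the appended $b$ would be the last term of a length-$5$ alternation and would therefore require two more $b$'s in $x$, which do not exist; this contradicts saturation. Otherwise $b$ occurs at some position $<i$; let $j$ be the largest such position. Then $j\le i-2$ (since $x_{i-1}\notin\{a,b\}$), so the stretch $x_{j+1},\dots,x_{i-1}$ is nonempty and contains no $a$ and no $b$. Insert an $a$ immediately after $x_j$; this is $2$-sparse, being preceded by $b$ and followed by $x_{j+1}\ne a$. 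In the new sequence the three $a$'s occupy positions $j+1,\,i+1,\,i+3$, while the positions strictly between the inserted $a$ and the $a$ at position $i+1$ carry neither $a$ nor $b$. Any copy of $u_3$ in the new sequence must use the inserted $a$; a short case check on which term of the length-$5$ alternation it is (if an odd-indexed term, the alternation runs through all three $a$'s as $ababa$; if an even-indexed term, it is necessarily the second term and the alternation has the shape $\alpha a\alpha a\alpha$) shows that in every case the alternation requires a $b$ strictly between the inserted $a$ and the $a$ at position $i+1$, i.e.\ in the forbidden stretch --- impossible, contradicting saturation.

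The step I expect to be the main obstacle is this last subcase: unlike the two ``append'' arguments, the added letter is interior to $x$, so one cannot simply place it as the last term of the forbidden alternation and must instead rule out each of the positions it could occupy in a length-$5$ alternation. The feature that makes this manageable is exactly that the letters strictly between the inserted $a$ and the next original $a$ avoid both $a$ and $b$, which blocks every candidate alternation. A recurring small point is that ``adding a letter'' in the definition of saturation allows interior insertions, not just appends, and one must always check $2$-sparsity of the modified sequence before invoking saturation.
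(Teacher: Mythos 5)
Your proof is correct, but it takes a genuinely different route from the paper's. The paper argues directly (never setting up a global contradiction): it inserts a $b$ immediately after $x_{i+2}$, observes that the resulting copy of $u_3$ must be on the letters $a,b$, and concludes either that there is a third $a$ after $x_{i+2}$ or that there is a $b$ at a position $\le i-2$ and no $b$ after $x_{i+1}$; in the latter case it inserts a second $b$, this time immediately before $x_i$, and again extracts a third $a$ from the forced copy of $u_3$. Crucially the paper always inserts a $b$, and it never distinguishes whether the block $aba$ is a suffix, since inserting after $x_{i+2}$ degenerates gracefully to an append. Your proof instead assumes for contradiction that $a$ occurs only at $i$ and $i+2$, splits on whether $aba$ is a suffix, and uses a different probe letter in each case: inserting $b$ after $x_{i+2}$ and then appending $a$ in the non-suffix case, and appending $b$ or inserting an interior $a$ after the last $b$ preceding the block in the suffix case. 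Your use of Lemma~\ref{lemma:alt} in Case 1 nicely short-circuits the ``the copy must be on the letters $a,b$'' step that the paper re-justifies inline. The one place you under-explain is the final subcase (interior insertion of $a$): you assert the forced alternation ``requires a $b$'' between the inserted $a$ and $x_i$, but that conclusion needs the suffix hypothesis — it is only because $|x|=i+2$ with $x_{i+1}=b$ and $x_{i+2}=a$ that the companion letter of the forced alternation is pinned down to be $b$; without that, an alternation $\alpha a\alpha a\alpha$ with $\alpha\ne b$ would not be ruled out by the blank stretch you identify. That detail is recoverable from what you wrote, but it deserves to be stated explicitly, since it is the point where your case split actually earns its keep. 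Overall the paper's version is a bit more uniform (two steps, same probe letter, no suffix dichotomy), while yours is more granular but equally valid.
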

\begin{proof}
   Suppose this contiguous alternating subsequence is  $(x_i,x_{i+1},x_{i+2})=aba$.
   Insert a $b$ right after $x_{i+2}=a$ to obtain sequence $x'$, which contains $u_3$. Note that this copy of $u_3$ must be on the letters $a, b$ or else $x$ would have already contained $u_3$. 

If $a$ has only two ocurrences in $x$, that is, $x_i$ and $x_{i+2}$, then the added $b$ is the last letter of the copy of $u_3$, so there is an occurrence of $b$ before $x_i$. By a symmetric argument of adding $b$ befoe $x_i$, there is an occurrence of $b$ in $x$ after $x_{i+2}$. Then the $b$ before $x_i$ together with $x_i,x_{i+1},x_{i+2}$ and the $b$ after $x_{i+2}$ form an occurrence of $u_3$, a contradiction.
\end{proof}

Below we list some of our conjectures about saturation for alternating sequences. 

\begin{conj}\label{c:sat_alt}
Suppose $x=(x_1,x_2,\dots)$ is a shortest $u_s$-saturated sequence on $n$ letters.
\begin{enumerate}
\item $x$ has length $ns-s+1$.
\item If $s$ is odd, then for any distinct letters $i$ and $j$, $i$'s first occurrence precedes $j$'s first occurrence if and only if $i$'s last occurrence precedes $j$'s last occurrence. If we order $n$ letters as $0,1,\dots,n-1$ by their first occurrences, then letters $0$ and $n-1$ each have $\frac{s+1}{2}$ occurrences and all other letters have $s$ occurrences.
\item If for $i < j$, $x_i$ and $x_j$ are $2$ occurrences of letter $a_k$ with no other occurrence of $a_k$ between them, then $x_{i+1}=x_{j-1}$.
\end{enumerate}
\end{conj}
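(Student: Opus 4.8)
The plan would be to prove $\xi(n,s)\ge s(n-1)+1$ --- Lemma~\ref{lemma:construction} supplies the matching upper bound, so this gives part 1 --- while tracking the structure of the extremizers, from which parts 2 and 3 should fall out. I would split on the parity of $s$, because the extremal sequences of Lemma~\ref{lemma:construction} are qualitatively different: for even $s$ a single ``hub'' letter occupies every other position (the first situation considered in Lemma~\ref{lem:altlower}), whereas for odd $s$ the construction is the ``staircase'', in which no letter occupies every other position.

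For even $s$ I would first dispose of the degenerate case of Lemma~\ref{lem:altlower}, where some letter $a_1$ cannot be appended without violating $2$-sparsity, so that $a_1$ occupies every other position of $x$. If another letter $b$ occurs $t$ times, then the longest $a_1b$-alternation has length exactly $2t+1$, so $u_s$-freeness forces $t\le s/2$; conversely, running the argument from the second paragraph of Lemma~\ref{lem:altlower} for $b$ (which can be appended at the end of $x$, since its last letter is $a_1\ne b$) gives $t\ge\lfloor s/2\rfloor=s/2$. Hence every non-hub letter occurs exactly $s/2$ times, $a_1$ occurs $(n-1)s/2+1$ times, the length is exactly $s(n-1)+1$, and parts 2 and 3 hold for this shape. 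What remains is the non-degenerate even case together with the whole odd case, which I would attack by induction on $n$ in the spirit of the $abab$-saturated lemma. There the naive split of a shortest $x$ at its first letter $a$ as $x=a\,u\,a\,v$ no longer decomposes cleanly: for $s\ge 3$ the blocks $u$ and $av$ can share letters (the construction of Lemma~\ref{lemma:construction} itself does so), so one cannot simply apply the inductive hypothesis to the two pieces. Instead I would peel off the prefix of $x$ ending at the last occurrence of $a$, aiming to show --- via Lemmas~\ref{lem:friends}, \ref{lemma:alt}, \ref{lemma:disjoint} and Corollary~\ref{corollary:another} --- that this prefix has length exactly $s$ and that the remaining suffix is a $u_s$-saturated sequence on $n-1$ letters, so that $|x|\ge s+\xi(n-1,s)=s(n-1)+1$.

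In the equality case, for odd $s$ one must additionally recover part 2. The occurrence counts $\tfrac{s+1}{2},s,\dots,s,\tfrac{s+1}{2}$ should follow once the total length is forced (the only slack allowed by Lemma~\ref{lem:altlower} sits on the extremes), with the value $\tfrac{s+1}{2}$ rather than $\lfloor s/2\rfloor$ on the two extreme letters pinned down by the odd analogue of Lemma~\ref{lemma:even-same}, which says that the first and last letters of a $u_s$-saturated sequence differ when $s$ is odd. The agreement of the first-occurrence order with the last-occurrence order should be precisely what lets the peeling step above be iterated: each step should strip the current extreme letter, which is localized to one end of the sequence in both orders simultaneously. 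Part 3 is the general-$s$ analogue of item 3 of the $abab$-saturated lemma, and once the recursive structure is available it should follow by applying Lemma~\ref{lemma:disjoint} between consecutive occurrences of a letter.

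The main obstacle is this peeling step for $s\ge 3$. When $s=2$, a letter shared across a split at a repeated letter instantly produces a copy of $abab$, which forces the two sides apart and makes each of them saturated; when $s\ge 3$ a shared letter creates no long alternation on its own, so one must show by other means --- presumably an amortized or charging argument, or an exchange argument that rewrites $x$ into the construction of Lemma~\ref{lemma:construction} without increasing its length --- that a shortest sequence cannot afford such ``wasted'' occurrences. Equivalently, one has to classify the shortest $u_s$-saturated sequences finely enough to read off parts 2 and 3: for $s=2$ the recursion inside the $abab$-saturated lemma does exactly this, but already for $s=3$ no such clean recursion is known, and supplying one is the heart of the conjecture.
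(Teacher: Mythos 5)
This statement is labelled as a \emph{conjecture} in the paper, and the paper offers no proof of it: Section~2 proves only the factor-of-two bounds (Lemma~\ref{lemma:construction} gives $\xi(n,s)\le s(n-1)+1$, Lemma~\ref{lem:altlower} gives $\xi(n,s)\ge n\lfloor s/2\rfloor + O(1)$) together with some structural lemmas, and the $s=2$ case is settled separately. So there is no ``paper proof'' to compare your attempt against; what you are being asked to prove is precisely what the authors leave open. Your proposal is honest about this, and the strategy you sketch --- match the Lemma~\ref{lemma:construction} upper bound with a lower bound obtained by a recursive decomposition of a shortest saturated sequence --- is the natural generalization of the paper's $s=2$ argument and is consistent with the structural lemmas (\ref{lem:friends}, \ref{lemma:alt}, \ref{lemma:disjoint}, \ref{corollary:another}, \ref{lemma:even-same}) the authors assembled, presumably with such an argument in mind.

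That said, what you have is a plan, not a proof, and you say so yourself. Two points are worth flagging concretely. First, the degenerate even case you resolve (one letter $a_1$ sits in every odd position) is the only piece that is fully argued, and even there you conclude only that \emph{if} a shortest sequence has this shape then it has length $s(n-1)+1$; you have not ruled out a shorter, non-degenerate sequence, so part~1 is not established even for even $s$. Second, for the general peeling step you aim to show that the prefix of $x$ up to the last occurrence of $x_1$ has length exactly $s$ and that the remaining suffix is $u_s$-saturated on $n-1$ letters. For $s\ge 3$ neither claim is immediate: deleting a prefix that contains all of $x_1$'s occurrences can destroy saturation (a letter that was blocked only by alternations through $x_1$ may now be freely insertable), and there is a circularity between the ordering statement in part~2 and the peeling that would need to be broken. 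You identify this as ``the heart of the conjecture,'' and I agree; the proposal as written leaves the conjecture open, as the paper does.
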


In the table below, we have used a program to generate some $u_s$-saturated sequences which we conjecture to have minimum length. \\

\begin{tabular}{|c|c|l|}
	\hline
	s & n & some $u_s$-saturated sequences likely to be the shortest\\
	\hline
	2 & 7 & 0, 1, 2, 1, 3, 1, 4, 5, 4, 1, 6, 1, 0 \\
	\hline
	2 & 7 & 0, 1, 0, 2, 0, 3, 4, 5, 4, 6, 4, 3, 0 \\
	\hline
	2 & 7 & 0, 1, 2, 1, 3, 1, 4, 1, 0, 5, 6, 5, 0 \\
	\hline
	3 & 7 & 0, 1, 2, 1, 0, 1, 2, 3, 4, 3, 2, 3, 4, 5, 6, 5, 4, 5, 6 \\
	\hline
	3 & 7 & 0, 1, 2, 1, 0, 1, 2, 3, 2, 3, 4, 5, 4, 3, 4, 5, 6, 5, 6 \\
	\hline
	3 & 7 & 0, 1, 0, 1, 2, 1, 2, 3, 4, 5, 4, 3, 2, 3, 4, 5, 6, 5, 6 \\
	\hline
	3 & 7 & 0, 1, 0, 1, 2, 1, 2, 3, 2, 3, 4, 3, 4, 5, 6, 5, 4, 5, 6 \\
	\hline
	4 & 7 & 0, 1, 0, 1, 0, 2, 0, 2, 0, 3, 4, 3, 0, 3, 4, 3, 5, 3, 6, 3, 5, 3, 6, 3, 0 \\
	\hline
	4 & 7 & 0, 1, 0, 1, 2, 1, 2, 1, 3, 1, 3, 1, 4, 1, 4, 1, 5, 1, 6, 1, 5, 1, 6, 1, 0 \\
	\hline
	4 & 7 & 0, 1, 0, 1, 0, 2, 0, 2, 3, 2, 3, 2, 0, 4, 0, 4, 0, 5, 0, 5, 0, 6, 0, 6, 0 \\ 
	\hline
	4 & 7 & 0, 1, 0, 1, 0, 2, 0, 2, 0, 3, 0, 4, 0, 5, 0, 6, 0, 3, 0, 4, 0, 5, 0, 6, 0 \\
	\hline
	4 & 7 & 0, 1, 0, 2, 0, 2, 0, 1, 0, 3, 0, 4, 0, 3, 5, 3, 5, 3, 6, 3, 6, 3, 0, 4, 0 \\
	\hline
	5 & 6 & 0, 1, 2, 1, 0, 1, 0, 1, 2, 3, 2, 1, 2, 3, 2, 3, 4, 5, 4, 3, 4, 5, 4, 3, 4, 5 \\
	\hline
	5 & 6 & 0, 1, 2, 1, 0, 1, 2, 1, 0, 1, 2, 3, 2, 3, 2, 3, 4, 5, 4, 3, 4, 5, 4, 3, 4, 5 \\
	\hline
	5 & 6 & 0, 1, 0, 1, 0, 1, 2, 1, 2, 3, 2, 1, 2, 3, 2, 3, 4, 3, 4, 3, 4, 5, 4, 5, 4, 5 \\
	\hline
	5 & 6 & 0, 1, 0, 1, 0, 1, 2, 1, 2, 1, 2, 3, 2, 3, 2, 3, 4, 3, 4, 3, 4, 5, 4, 5, 4, 5 \\
	\hline
\end{tabular}
\section{General results about saturation functions of sequences}\label{s:sat_gen}

We start with some simple results about the saturation function of generalized Davenport-Schinzel sequences.

\begin{lem}
    If $u$ has length $r$ and every letter occurs once in $u$, then $\sat(u, n) = \min(n, r-1)$. 
\end{lem}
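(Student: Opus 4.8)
The plan is to begin with the observation that makes the whole thing trivial: since every letter of $u = a_1 a_2 \cdots a_r$ occurs exactly once, \emph{every} word on $r$ pairwise distinct letters, in any order, is isomorphic to $u$. Hence a sequence $s$ is $u$-free if and only if it uses at most $r-1$ distinct letters, and adding a letter to a $u$-free $s$ induces a copy of $u$ if and only if the resulting word uses $r$ distinct letters (equivalently, $s$ already used exactly $r-1$ distinct letters and the inserted letter is new). So the lemma reduces to a short counting statement about $r$-sparse words on few letters.

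For the upper bound I would exhibit $s = 1\,2\,3\cdots m$ with $m = \min(n, r-1)$, the first $m$ letters of the alphabet each used once. Its length is $m \le r-1 < r$, so it has no window of $r$ consecutive positions and is $r$-sparse, and it uses $m \le r-1$ letters, so it is $u$-free. Inserting any letter $x$ of the alphabet gives a word $s'$ of length $m+1$; when $m = r-1$, either $x$ is new, so $s'$ has $r$ distinct letters and contains $u$, or $x$ already occurs, so $s'$ is a length-$r$ word with a repeated letter and hence is not $r$-sparse. Thus $s$ is $u$-saturated and $\sat(u,n) \le \min(n, r-1)$. The point to handle carefully is that a word of length exactly $r-1$ is itself $r$-sparse but admits no legal insertion, which is precisely what forces saturation; one should also double-check the boundary regime $n < r-1$ separately, where the substance of the bound is really just $\sat(u,n) \le r-1$.

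For the lower bound, suppose $s$ is $u$-saturated with $|s| < \min(n, r-1)$, so that $|s| < n$ and $|s| < r-1$ simultaneously. As $|s| < n$, some alphabet letter $x$ is absent from $s$; append it to get $s'$. Then $|s'| = |s| + 1 \le r-1 < r$, so $s'$ has no window of $r$ consecutive positions and is automatically $r$-sparse, and $s'$ uses at most $r-1 < r$ distinct letters, so $s'$ is $u$-free. Hence this insertion violates neither $r$-sparsity nor $u$-freeness, contradicting saturation; therefore $\sat(u, n) \ge \min(n, r-1)$. Combined with the construction this gives the claim. The only real work anywhere is the bookkeeping with short sequences and their vacuous $r$-sparsity; there is no substantive obstacle here, this being essentially a warm-up lemma.
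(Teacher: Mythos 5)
Your construction and strategy match the paper's---take $s = 1\,2\cdots m$ with $m = \min(n, r-1)$ for the upper bound, and for the lower bound append a missing letter to any shorter sequence---but the upper bound argument has a genuine gap when $n < r-1$, and the remark you make about that regime is wrong. There $m = n$, the sequence $s$ uses every letter of the alphabet, so inserting a letter $x$ is necessarily a repeat, and the resulting word $s'$ has length $n+1 \le r-1 < r$. You never argue that this insertion is illegal. Worse, under the reading of $r$-sparsity you use throughout (``has no window of $r$ consecutive positions and is $r$-sparse''), $s'$ \emph{is} vacuously $r$-sparse and still $u$-free, so $s$ would \emph{not} be saturated and the construction would fail; the minimum saturated length on $n<r-1$ letters would then be $r-1$ rather than $n$. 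Your closing comment that ``the substance of the bound is really just $\sat(u,n) \le r-1$'' is exactly this incorrect conclusion, whereas the lemma asserts $\sat(u,n)=n$ when $n<r-1$.

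The fix is to read $r$-sparsity the way the paper intends and uses it: any block of \emph{at most} $r$ consecutive letters must be all distinct, equivalently two equal letters always lie at distance at least $r$. In particular a sequence of length less than $r$ is \emph{not} automatically $r$-sparse; it is $r$-sparse precisely when all its letters are distinct. With that reading, inserting any alphabet letter into $s = 1\,2\cdots n$ (with $n < r-1$) places two equal letters inside a window of length at most $n+1\le r-1<r$, so the insertion violates $r$-sparsity and $s$ is saturated, completing the missing case. The other sparsity claims in your write-up are conclusions that happen to be true but for the wrong reason, and should be rephrased accordingly: $s$ is $r$-sparse because all its letters are distinct, not because it is short; and in the lower bound, appending a \emph{new} letter to an $r$-sparse $s$ of length less than $r-1$ yields a word with all distinct letters, hence $r$-sparse. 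Your reduction to ``every $r$-term word on distinct letters is isomorphic to $u$'' and the remaining counting are correct and are exactly what the paper's terse proof relies on.
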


\begin{proof}
    If $u$ has a total of $r$ letters, then any sequence of $\min(n,r-1)$ distinct letters is $u$-saturated, so $\sat(u, n) \le \min(n,r-1)$. Moreover, any $u$-saturated sequence must have at least $\min(n, r-1)$ distinct letters, since we can add a new letter anywhere to any $r$-sparse sequence with at most $\min(n, r-1)-1$ distinct letters and still avoid $u$. Thus, $\sat(u, n) = \min(n, r-1)$.
\end{proof}

\begin{lem}\label{lem:sat_first}
    Suppose that $u$ is a sequence in which the first letter occurs multiple times or the last letter occurs multiple times. Then $\sat(u, n) \ge n$.
\end{lem}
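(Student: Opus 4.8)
The plan is to prove the stronger claim that every letter of the size-$n$ alphabet $\Sigma$ must appear in any $u$-saturated sequence $s$; since $s$ uses only letters from $\Sigma$, this forces $|s|\ge n$. The leverage comes from the fact that the saturation definition allows us to insert a letter \emph{anywhere} into $s$, in particular at either end.

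Suppose first that the first letter $a$ of $u$ occurs at least twice in $u$, and suppose for contradiction that some letter $c\in\Sigma$ does not occur in $s$. Let $r$ be the number of distinct letters of $u$, and let $s'=cs$ be obtained by prepending $c$ to $s$. I would first check that $s'$ is $r$-sparse: $s$ is $r$-sparse, and the only window of $r$ consecutive letters of $s'$ that is not already a window of $s$ is $c$ followed by the first $r-1$ letters of $s$, and these are $r$ distinct letters since $c$ differs from every letter of $s$. Hence the saturation hypothesis produces a subsequence $u'$ of $s'$ isomorphic to $u$.

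The key deduction is then that $u'$ must use the prepended $c$: otherwise $u'$ would be a subsequence of $s$, contradicting that $s$ is $u$-free. Since the prepended $c$ is the first term of $s'$, it is the first term of $u'$, so under the isomorphism $u'\cong u$ the letter $c$ plays the role of the first letter $a$ of $u$. As $a$ occurs at least twice in $u$, the letter $c$ must then occur at least twice in $u'$, hence at least twice in $s'$ --- but $c$ occurs exactly once in $s'$. This contradiction shows that no letter of $\Sigma$ is missing from $s$, so $|s|\ge n$ and $\sat(u,n)\ge n$.

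The case where the last letter of $u$ occurs multiple times is entirely symmetric: append $c$ to $s$ instead, verify $r$-sparsity the same way, and use that the appended $c$, being the last term, must play the role of the last letter of $u$. I do not anticipate a genuine obstacle; the two spots requiring a little care are the $r$-sparsity bookkeeping after the insertion and the appeal to $u$-freeness of $s$, the latter being exactly what forces $u'$ to involve $c$ --- and the reason this argument is specific to $\sat$ rather than to $\ssat$, where $s$ need not avoid $u$.
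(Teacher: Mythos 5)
Your proposal is correct and follows essentially the same approach as the paper: show that a missing letter $c$ could be prepended (or appended) to the saturated sequence $s$ without violating $r$-sparsity or creating a copy of $u$, because the new $c$ would have to play the role of the repeated first (or last) letter of $u$ yet occurs only once. You simply make explicit the $r$-sparsity check and the reason the resulting sequence remains $u$-free, both of which the paper's proof leaves implicit.
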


\begin{proof}
    Without loss of generality, suppose that the first letter of $u$ occurs multiple times. If $x$ is a $u$-saturated sequence on the letters $a_1, a_2, \dots, a_n$, then we claim that every letter $a_1, a_2, \dots, a_n$ must occur at least once in $x$. Otherwise, if $a_i$ is some letter that does not occur in $x$, then we can add $a_i$ to the front of $x$ and obtain a new sequence which still avoids $u$.
\end{proof}

\begin{cor}
    For all $n \ge 1$ and $k \ge 1$, we have $\sat(a_1 a_2 \dots a_k a_1, n) = n$. 
\end{cor}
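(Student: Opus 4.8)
The plan is to prove the two inequalities $\sat(u,n)\ge n$ and $\sat(u,n)\le n$ separately, where $u:=a_1a_2\dots a_ka_1$. The lower bound is immediate: $u$ has $k$ distinct letters and its first letter $a_1$ occurs more than once (this holds even when $k=1$, where $u=a_1a_1$), so Lemma~\ref{lem:sat_first} gives $\sat(u,n)\ge n$.

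For the upper bound I would exhibit an explicit $u$-saturated sequence of length $n$, namely $x=a_1a_2\dots a_n$, the word listing each of the $n$ alphabet letters exactly once. Since $x$ has no repeated letter it is $k$-sparse, and it is $u$-free because any subsequence isomorphic to $u$ must repeat its first letter. The only real work is to verify the saturation condition: for every alphabet letter $a_i$ and every position at which an extra copy of $a_i$ may be inserted into $x$, the resulting word $x'$ either fails $k$-sparsity or contains a subsequence isomorphic to $u$.

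To check this I would fix such an $x'$, let $p<q$ be the two positions of $a_i$ in it (every other letter still occurs exactly once), and split on the gap $q-p$. If $q-p<k$, then $x'$ contains two equal letters within a block of fewer than $k$ consecutive positions, so $x'$ is not $k$-sparse. If $q-p\ge k$, then the $q-p-1\ge k-1$ positions strictly between $p$ and $q$ form a contiguous block of $x'$ avoiding $a_i$, hence a contiguous block of the repetition-free word $x$; its letters are therefore pairwise distinct and all different from $a_i$. Reading any $k-1$ of them from left to right as $b_2,\dots,b_k$ and setting $b_1=a_i$ produces a subsequence $b_1b_2\dots b_kb_1$ of $x'$ with $b_1,\dots,b_k$ distinct, which is isomorphic to $u$. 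These cases are exhaustive, so $x$ is $u$-saturated and $\sat(u,n)\le n$; combined with the lower bound this yields $\sat(u,n)=n$.

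I do not expect a genuine obstacle, since this is a short corollary of Lemma~\ref{lem:sat_first} together with one construction; the step needing the most care is the case split on $q-p$, where one must use that $x$ itself is repetition-free to guarantee $k-1$ distinct intermediate letters, and must check that the boundary value $q-p=k$ falls into the second case. It is also worth noting that the same argument goes through without change in the small-parameter regimes $k=1$ and $n<k$.
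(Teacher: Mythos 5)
Your proof is correct and follows essentially the same route as the paper: the lower bound in both cases is an immediate application of Lemma~\ref{lem:sat_first}, and the upper bound in both cases exploits the same key observation that a repeated letter in a $k$-sparse sequence encloses $k-1$ distinct intermediate letters and hence a copy of $u$. The only (minor) difference is that you use this observation to verify that the explicit sequence $a_1a_2\dots a_n$ is $u$-saturated, whereas the paper uses it in the contrapositive direction to show that any $u$-saturated sequence must be repetition-free and hence has length at most $n$; both give $\sat(u,n)\le n$.
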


\begin{proof}
    Let $s$ be an $a_1 a_2 \dots a_k a_1$-saturated sequence on an alphabet of $n$ letters. If some letter occurs at least twice in $s$, then $s$ would contain $a_1 a_2 \dots a_k a_1$ since $s$ is $k$-sparse. Thus, every letter in $s$ occurs at most once, so we have $\sat(a_1 a_2 \dots a_k a_1, n) \le n$.

    By Lemma~\ref{lem:sat_first}, we have $\sat(a_1 a_2 \dots a_k a_1, n) \ge n$. Thus, $\sat(a_1 a_2 \dots a_k a_1, n) = n$. 
\end{proof}

In the next result, we obtain a linear bound on the saturation functions of sequences of the form $(a_1a_2\ldots a_k)^r$. Interestingly enough, these same sequences play an important role in our lower bounds for semisaturation functions of alternating sequences in the next section. 

\begin{lem}\label{lem:up}
$\sat(u,n)\leq (kr-k)n-(k-1)(kr-k-1)$ for all $n\geq k$ where $u=(a_1a_2\ldots a_k)^r$.
\end{lem}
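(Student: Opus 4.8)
The plan is to exhibit an explicit $u$-saturated sequence on $n$ letters of length exactly $(kr-k)n-(k-1)(kr-k-1)$. Work over the alphabet $\{1,\dots,n\}$; call $1,\dots,k-1$ the \emph{core} letters and $k,\dots,n$ the \emph{free} letters, and let $C:=1\,2\,\cdots\,(k-1)$ be the ascending word on the core letters. For $j=k,\dots,n$ set $B_j:=(C\,j)^{r-1}$ and define
$$S:=B_k\,B_{k+1}\cdots B_n\,C=(C\,k)^{r-1}(C\,(k+1))^{r-1}\cdots(C\,n)^{r-1}\,C.$$
For $k=2$ this is precisely the even-length construction of Lemma~\ref{lemma:construction}. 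Structurally, $S$ is a concatenation of $(r-1)(n-k+1)+1$ copies of $C$, every two consecutive copies separated by a single free letter; reading the separators left to right gives $k$ repeated $r-1$ times, then $k+1$ repeated $r-1$ times, and so on up to $n$. Hence $|S|=k(r-1)(n-k+1)+(k-1)$, which rearranges to exactly $(kr-k)n-(k-1)(kr-k-1)$, so everything reduces to showing that $S$ is $u$-saturated.

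First, $S$ is $k$-sparse: each core letter occurs once per copy of $C$ and each free letter $j$ occurs once per factor $C\,j$ inside $B_j$, and in both cases consecutive occurrences of the same letter are exactly $k$ apart, so no length-$k$ window repeats a letter. Second, $S$ is $u$-free: a subsequence isomorphic to $(a_1\cdots a_k)^r$ requires $k$ distinct letters each occurring at least $r$ times in $S$, but each core letter occurs $(r-1)(n-k+1)+1\ge r$ times while each free letter occurs only $r-1$ times, so at most $k-1<k$ letters qualify.

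The heart of the argument is saturation: inserting any letter of the alphabet into $S$ at any position must break $k$-sparsity or create a copy of $u$. A core letter cannot be inserted anywhere without breaking $k$-sparsity, since consecutive occurrences of a core letter are exactly $k$ apart throughout $S$ and the extreme occurrences sit within $k-1$ of the corresponding end, so a new occurrence is always forced within $k-1$ of an old one. So fix a free letter $j\in\{k,\dots,n\}$; its $r-1$ occurrences all lie inside $B_j$, pairwise $k$ apart, so inserting a $j$ strictly between the first and the last of them breaks $k$-sparsity. Every other position is either before the first occurrence of $j$ in $B_j$ or after the last. If it is before, the insertion preserves $k$-sparsity ($j$ is absent outside $B_j$ and differs from the $k-1$ entries on each side), and the inserted $j$, followed by the copy of $C$ that opens $B_j$, followed by the $r-1$ consecutive pairs (an occurrence of $j$ in $B_j$; the copy of $C$ immediately after it)—the last such copy of $C$ being the one that opens $B_{j+1}$, or the trailing $C$ when $j=n$—is a subsequence $(j\,C)^r$, a copy of $u$. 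If it is after, the $r-1$ pairs (a copy of $C$ in $B_j$; the following occurrence of $j$), then the copy of $C$ opening $B_{j+1}$ (or the trailing $C$), then the inserted $j$, give $(C\,j)^r$.

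The main obstacle is precisely this final case analysis: checking that $k$-sparsity is not violated at each admissible insertion site, and handling the boundary cases—$j=k$ (where $B_j$ opens $S$), $j=n$ (where the completing copy of $C$ must be the appended trailing $C$), and insertions at the extreme ends of $S$—in which the $r$-th round of the forbidden pattern has to borrow the copy of $C$ contributed by the next block or by the trailing $C$. Everything else is bookkeeping.
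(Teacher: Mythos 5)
Your construction is exactly the paper's: your $C$ is the paper's $A_k$, your $B_j=(C\,j)^{r-1}$ is the paper's $A_{j,k,r}$, and your $S=B_k\cdots B_n\,C$ is the paper's $S_{u,n}$; the paper simply states the sparsity, $u$-freeness, and saturation properties without verification, whereas you supply the checks. So the proposal is correct and takes the same approach, just with the omitted bookkeeping spelled out (with the small caveat that some positions you describe as ``before the first occurrence of $j$'' but within $k-1$ of it do violate $k$-sparsity rather than preserve it, which is harmless since that still discharges the saturation requirement).
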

\begin{proof}
$$A_k:=[1,2,\ldots,k-1].$$
$$A_{i,k,r}:=[\underbrace{A_k,i,A_k,i,\ldots,A_k,i}_{r-1\text{~pairs of $A_k,i$}}].$$
Now consider the following construction
$$S_{u,n}:=[A_{k,k,r},A_{k+1,k,r},\ldots,A_{n,k,r},A_k].$$
For example, for $k=5$ and $r=3$
$$
S_{u,n}=[12345,12345,12346,12346,\dots ,1234n,1234n,1234].
$$
It is not difficult to see that this sequence is $k$-sparse over $n$ letters and has no subsequence isomorphic to $u$. Now, we show that adding a letter from $[n]$ to $S_{u,n}$ would either violate sparsity or create a subsequence isomorphic to $u$.
Adding a letter in $[k-1]$ would violate sparsity.
Adding $i\ge k$ inside $A_{i,k,r}$ or right before one of the next $k-1$ letters after $A_{i,k,r}$ would violate sparsity.
Adding $i\ge k$ elsewhere would create, together with $A_{i,k,r}$ and the next $k-1$ letters, an occurrence of $u$ on the letters $[k-1]\cup\{i\}$.
Sequence $S_{u,n}$ has length $(kr-k)n-(k-1)(kr-k-1)$, so our result follows.
\end{proof}

\begin{cor}
    For any $r \ge 2$ and $u = (a_1a_2\ldots a_k)^r$, we have $\sat(u, n) = \Theta(n)$.
\end{cor}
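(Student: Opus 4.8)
The plan is to sandwich $\sat(u,n)$ between two linear bounds that are already established in the excerpt, so that the work amounts to citing them and checking the hypotheses apply. For the upper bound I would invoke Lemma~\ref{lem:up}, which gives $\sat(u,n)\le (kr-k)n-(k-1)(kr-k-1)$ for all $n\ge k$; since $k$ and $r$ are regarded as fixed constants, the right-hand side is $O(n)$. (The finitely many values $n<k$ play no role in an asymptotic statement, so they can be ignored or checked by hand.)

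For the matching lower bound, I would observe that $u=(a_1a_2\ldots a_k)^r$ with $r\ge 2$ has its first letter $a_1$ occurring $r\ge 2$ times, i.e.\ multiple times. Hence Lemma~\ref{lem:sat_first} applies verbatim and yields $\sat(u,n)\ge n$ for all $n$. Combining the two bounds, $n\le \sat(u,n)\le (kr-k)n-(k-1)(kr-k-1)$ for every $n\ge k$, and therefore $\sat(u,n)=\Theta(n)$.

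There is essentially no obstacle here: the corollary is a direct consequence of Lemma~\ref{lem:up} and Lemma~\ref{lem:sat_first}. The only points worth stating explicitly are that the implied constants in $\Theta(\cdot)$ depend on $k$ and $r$, and that the upper bound in Lemma~\ref{lem:up} is only claimed for $n\ge k$, so the $\Theta(n)$ conclusion is an asymptotic statement in $n$ with $k$ and $r$ treated as constants.
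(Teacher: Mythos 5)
Your proposal is correct and uses exactly the same two lemmas (Lemma~\ref{lem:up} for the upper bound and Lemma~\ref{lem:sat_first} for the lower bound) that the paper's own one-line proof cites. The extra remarks about the regime $n\ge k$ and constants depending on $k,r$ are fine but not strictly needed.
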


\begin{proof}
    The upper bound follows from Lemma~\ref{lem:up}, and the lower bound follows from Lemma~\ref{lem:sat_first}.
\end{proof}

Next, we apply Lemma~\ref{lem:sat_first} again to determine the saturation functions of the sequences of the form $a_1a_2a_3\dots a_{k-2}a_{k-1}a_k a_{k-1} a_{k-2}\dots a_2a_1$ up to a constant factor. 

\begin{cor}
	For all $n\ge k \ge 2$ and $u=a_1a_2a_3\dots a_{k-2}a_{k-1}a_k a_{k-1} a_{k-2}\dots a_2a_1$, we have $\sat(u,n)=\Theta(n)$.
\end{cor}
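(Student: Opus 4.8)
The plan is to trap $\sat(u,n)$ between two linear functions. The lower bound is immediate from Lemma~\ref{lem:sat_first}: since $k\ge 2$, the first letter $a_1$ of $u=a_1a_2a_3\cdots a_{k-1}a_k a_{k-1}\cdots a_2a_1$ occurs at least twice (at positions $1$ and $2k-1$), so Lemma~\ref{lem:sat_first} gives $\sat(u,n)\ge n$ for all $n\ge 1$, and in particular for all $n\ge k$.

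For the matching upper bound I would use the inequality $\sat(u,n)\le\ex(u,n)$ from the introduction and show $\ex(u,n)=O(n)$ (with the implied constant depending on $k$). The key observation is that $u$ is obtained from the two-block formation $w=(a_1a_2\cdots a_k)(a_k a_{k-1}\cdots a_2a_1)$ by deleting one of the two consecutive copies of $a_k$; in particular $u$ is a subsequence of $w$. Hence every $k$-sparse sequence that avoids $w$ also avoids $u$, so $\ex(u,n)\le\ex(w,n)$. Since $w$ is a formation with only two blocks, its extremal function is linear, i.e. $\ex(w,n)=O(n)$. Combining, $n\le\sat(u,n)\le\ex(u,n)\le\ex(w,n)=O(n)$, so $\sat(u,n)=\Theta(n)$.

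The one ingredient that is not routine is the bound $\ex(w,n)=O(n)$ for the two-block formation $w$, and I expect this to be the main point to pin down: I would either invoke the known linear bounds on extremal functions of formations (formations with at most two blocks have linear extremal functions), or give a short self-contained argument for this particular $w$ via a greedy block decomposition of a $w$-free sequence together with the observation that $w$ contains no alternation of length $5$. An alternative route, closer in spirit to Lemma~\ref{lem:up}, would be to exhibit an explicit $u$-saturated sequence of length $O(n)$ directly; the obstacle there is more delicate, since the obvious candidates (weaving a fixed backbone $1,2,\ldots,k-1$ together with the fresh letters, as in $S_{u,n}$) already contain the pyramid $u$, and one must work harder to stay $u$-free while still forcing every admissible insertion to complete a copy of $u$. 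I would therefore present the formation reduction as the clean proof and fall back on an explicit construction only if a reference for the formation bound is not convenient.
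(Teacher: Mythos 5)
Your lower bound is exactly the paper's lower bound: Lemma~\ref{lem:sat_first} applies because $a_1$ occurs twice in $u$. The paper's upper bound is a one-liner that you did not quite land on: combine $\sat(u,n)\le\ex(u,n)$ with the Klazar--Valtr bound $\ex(u,n)=O(n)$~\cite{KV}, applied directly to the pyramid $u$ itself, with no detour.

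Your reduction through the two-block formation $w=(a_1\cdots a_k)(a_k\cdots a_1)$ has two problems. First, the implication you wrote --- ``every $k$-sparse sequence that avoids $w$ also avoids $u$'' --- is backwards; it would give $\ex(w,n)\le\ex(u,n)$, the opposite of the inequality you then assert. The correct statement is that $u$ is a subsequence of $w$, so any sequence containing $w$ contains $u$, hence avoiding $u$ forces avoiding $w$, and therefore $\ex(u,n)\le\ex(w,n)$. Second, and more substantively, this route requires $\ex(w,n)=O(n)$, which is an a priori \emph{stronger} claim than the $\ex(u,n)=O(n)$ the corollary actually needs (precisely because $\ex(u,n)\le\ex(w,n)$); and your proposed self-contained fallback --- that $w$ contains no alternation of length $5$ --- does not by itself imply linearity once $k\ge 3$, since the ``linear iff no $ababa$'' characterization of extremal functions is specific to forbidden patterns on two distinct letters. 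So the formation route still ultimately needs an external citation of the same type as~\cite{KV}, at which point it is simpler to cite it for the pyramid directly, as the paper does.
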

\begin{proof}
	The lower bound $\sat(u, n) = \Omega(n)$ follows from Lemma~\ref{lem:sat_first}. Since $\ex(u, n) = O(n)$ \cite{KV}, we have $\sat(u, n) \le \ex(u, n) = O(n)$.
\end{proof}

By Lemma~\ref{lem:sat_first}, if every letter appears at least twice in $u$, then $\sat(u,n)\ge n$. Next, we generalize this lower bound to sequences where every letter appears at least a given number of times.
We start with a useful definition.

\begin{definition}
    Let $u$ be a sequence. Denote by $f_u(k)$ the frequency of letter $k$ in $u$. Define $m_u:=\min_k\{f_u(k):f_u(k)>0\}$, i.e., the minimum number of occurrences of any letter in $u$.
\end{definition}
\begin{lem}
    For any sequence $u$, if every letter of $u$ appears at least twice in $u$, i.e., $m_u\ge 2$, then $\sat(u,n)\ge (m_u-1)n$ for sufficiently large $n$.
\end{lem}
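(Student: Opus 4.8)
The goal is to show that every $u$-saturated sequence $x$ on $n$ letters has length at least $(m_u-1)n$, i.e., on average every letter of $x$ occurs at least $m_u - 1$ times. The natural strategy is to argue that there can be at most a bounded number of "deficient" letters—letters appearing fewer than $m_u - 1$ times in $x$—and then, since we only need the bound for sufficiently large $n$, absorb those few exceptions into the asymptotics. But the clean statement $\sat(u,n) \ge (m_u-1)n$ with no additive slack suggests instead a letter-by-letter argument: I would try to show that \emph{every} letter of the alphabet that appears in $x$ appears at least $m_u - 1$ times, while every letter not appearing in $x$ contributes a forced occurrence elsewhere. Actually the cleaner route: first invoke Lemma~\ref{lem:sat_first}—since $m_u \ge 2$ the first letter of $u$ occurs multiply, so every one of the $n$ alphabet letters appears in $x$ at least once. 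This already gives $\sat(u,n)\ge n$; we must upgrade the count from $1$ to $m_u - 1$ per letter.

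The key step is a local surgery argument. Fix a letter $b$ appearing in $x$, say $t_b$ times, and suppose $t_b \le m_u - 2$. The idea is to find a position in $x$ where inserting a new copy of $b$ keeps the sequence $r$-sparse (where $r$ is the number of distinct letters in $u$), producing a sequence $x'$; by saturation, $x'$ contains a copy of $u$, and this copy must use the newly inserted $b$ (else $x$ already contained $u$). In that copy of $u$, the letter playing the role of some symbol of $u$ that has frequency $f_u \ge m_u$ must be realized by at least $m_u$ occurrences in $x'$; if that symbol's role is played by $b$ itself, then $b$ had at least $m_u - 1$ occurrences in $x$, a contradiction. So the inserted $b$ must be playing the role of a symbol whose every other occurrence comes from old letters—but then I want to argue the old copies already witnessed $u$, or that $b$ could have been a different letter, forcing a contradiction. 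The delicate point is choosing the insertion position so that (a) $r$-sparsity is preserved and (b) the forced copy of $u$ genuinely requires the new $b$ in an essential way.

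The step I expect to be the main obstacle is exactly this: ensuring that inserting a copy of $b$ is \emph{legal} (does not violate $r$-sparsity) and that the resulting copy of $u$ cannot be "explained away." If every legal insertion of $b$ produces a copy of $u$ in which the new $b$ plays a role that could be filled by an already-present occurrence, we get no contradiction directly; one has to track which symbol-role the new $b$ fills and compare frequencies. I would handle this by considering the occurrence of $b$ in $x$ that is "rightmost and sparsifiable" (or use the reverse of $x$ by symmetry, as the paper does elsewhere), insert immediately after it, and then do a careful case analysis on the role of the inserted letter in the new copy of $u$, using that all letters of $u$ have frequency $\ge m_u$ and that $b$ currently has frequency $< m_u - 1$. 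The fallback, if the exact bound proves stubborn, is to prove it only up to an additive constant depending on $u$ (bounding the number of deficient letters by, say, the length of $u$), which still yields $\sat(u,n) \ge (m_u-1)n - O_u(1)$; since the statement only claims the bound for sufficiently large $n$, this weaker local argument may in fact be what is intended, with the $O_u(1)$ slack hidden by "sufficiently large."
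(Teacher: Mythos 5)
Your surgery argument handles the easy half correctly: if a letter $b$ with $t_b \le m_u-2$ occurrences \emph{could} be legally inserted somewhere without violating $r$-sparsity, then by saturation a copy of $u$ appears, it must use the new $b$ (else $x$ already contained $u$), yet $b$ now has only $t_b+1 \le m_u-1$ occurrences while every letter of $u$ needs at least $m_u$ --- contradiction. That part is sound. But you explicitly flag, and then do not close, the case where \emph{no} legal insertion of $b$ exists, and that is where the paper's proof does all the remaining work. The resolution is not a deeper case analysis of which role the inserted $b$ plays; it is a counting argument. If every insertion of $b$ would break $r$-sparsity, then every position in $x$ is within $r-1$ of an existing occurrence of $b$: there are at most $r-2$ letters before the first $b$, at most $r-2$ after the last $b$, and at most $2r-3$ between consecutive $b$'s. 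This forces $f_x(b) \ge (|x|+1)/(2r-2)$. Now take $b$ to be the \emph{least frequent} letter of $x$ (and note first, as you did, that every letter of the alphabet must appear in $x$), so that $|x| \ge n\cdot f_x(b)$. Combining gives $|x| \ge \frac{n}{2r-2}(|x|+1)$, which is impossible once $n \ge 2r-2$. Hence for $n \ge 2r-2$ no letter is deficient, and $|x| \ge (m_u-1)n$ exactly.

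Your proposed fallback is also not correct as stated. A bound of the form $\sat(u,n) \ge (m_u-1)n - O_u(1)$ is strictly weaker than $\sat(u,n) \ge (m_u-1)n$, and the gap is \emph{not} absorbed by taking $n$ large --- additive slack does not disappear against a linear leading term. The phrase ``for sufficiently large $n$'' in the statement is there for a different reason: the counting contradiction above only kicks in once $n \ge 2r-2$. The lemma as stated needs zero deficient letters, which is exactly what the density argument delivers; bounding the number of deficient letters by a constant would not suffice.
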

\begin{proof}
Let $x$ be a $u$-saturated sequence on $n$ letters. If some letter $a\in[n]$ is absent in $x$, then adding $a$ anywhere in $x$ does not create a subsequence isomorphic to $u$ since $a$ only appears once in $x$ after the addition. This contradicts with $x$ being $u$-saturated. Thus, every letter from $[n]$ is in $x$.

If every letter from $[n]$ appears at least $m_u-1$ times in $x$, then we are done.
Suppose that the least frequent letter $a\in[n]$ has less than $m_u-1$ occurrences in $x$, then there is no way of adding $a$ to $x$ without violating $\|u\|$-sparsity. So, in $x$ there are at most $\|u\|-2$ letters before the first $a$ and at most $\|u\|-2$ letters after the last $a$, and between consecutive appearances of $a$ there are at most $2\|u\|-3$ letters. Hence, the length of $x$ is bounded from above as
$$
|x|\le 2(\|u\|-2)+(f_x(a)-1)(2\|u\|-3)+f_x(a)=f_x(a)(2\|u\|-2)-1.
$$
Let $n\ge 2\|u\|-2$, then, since $a$ is the least frequent letter in $u$, we have  $|x|\ge nf_x(a)\ge (2\|u\|-2)f_x(a)$, a contradiction. Thus, every letter from $[n]$ appears at least $m_u-1$ times in $x$ and the result follows.
\end{proof}

The next lemma shows that the saturation function of any sequence $u$ is always either $O(1)$ or $\Omega(n)$. We conjecture that it is always either $O(1)$ or $\Theta(n)$.

\begin{lem}
For any sequence $u$,
either $\sat(u,n)\ge n$ for every positive integer $n$ or $\sat(u,n)=O(1)$.
\end{lem}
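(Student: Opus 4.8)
The plan is to show that if $\sat(u,n)$ is not bounded below by the identity function $n$, then it collapses to $O(1)$. Concretely, suppose there exists some positive integer $n_0$ with $\sat(u, n_0) < n_0$. I would fix a shortest $u$-saturated sequence $x$ on an alphabet of size $n_0$; since its length is strictly less than $n_0$, at least one letter of the alphabet — say $a$ — does not appear in $x$ at all. The idea is that the absence of $a$ forces a strong structural restriction on $u$ itself, which then lets us produce short $u$-saturated sequences on alphabets of every size.

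The key step is to understand what it means that we cannot add $a$ anywhere in $x$. Because $x$ is $u$-saturated and $a$ is absent from $x$, inserting $a$ into any gap (or at either end) of $x$ must either violate $r$-sparsity — where $r$ is the number of distinct letters of $u$ — or create a copy of $u$. But a newly inserted single $a$ occurs exactly once in the new sequence, so any newly created copy of $u$ must use that unique $a$ for a letter of $u$ that occurs exactly once in $u$. I would argue this pins down that $u$ has a letter occurring exactly once, and moreover — combining with Lemma~\ref{lem:sat_first}, whose contrapositive says that if $\sat(u,n) < n$ for some $n$ then neither the first nor the last letter of $u$ occurs multiple times — that the first and last letters of $u$ each occur exactly once. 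So the hypothesis $\sat(u, n_0) < n_0$ already implies $u$ begins and ends with letters of multiplicity one in $u$.

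Given that structural fact, I would finish by exhibiting, for every $n$, a $u$-saturated sequence of bounded length. The natural candidate is to take the sequence $u$ itself with its first letter deleted, or $u$ with its last letter deleted (these are $u$-free by minimality of the copy, and they are $r$-sparse after a bounded-length padding or can be taken over exactly the $r-1$ remaining letters so sparsity is automatic), and then check that no letter of the size-$n$ alphabet can be added without creating $u$ or violating sparsity: adding back the removed first (resp. last) letter at the appropriate end recreates $u$, adding any already-present letter is blocked by the structure of $u$, and adding a brand-new letter is impossible because $r$-sparsity over an $(r-1)$-letter window is already tight at the ends — this is essentially the same bookkeeping that appears in the earlier lemma computing $\sat(a_1\cdots a_k, n) = \min(n, k-1)$ and in Lemma~\ref{lem:sat_first}. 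This gives $\sat(u, n) \le |u| - 1 = O(1)$ for all $n$, completing the dichotomy.

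The main obstacle I anticipate is the middle step: carefully extracting from "no single inserted $a$ creates a copy of $u$" the precise conclusion about $u$'s first and last letters, and then, in the final construction, verifying that when the removed letter is an interior letter of multiplicity one (rather than literally the first or last), the resulting bounded sequence is genuinely $u$-saturated over an arbitrary alphabet — in particular that one cannot sneak in a new letter. Handling the sparsity conditions at the boundary, and the possibility that deleting different single-occurrence letters behaves differently, is where the argument needs the most care; I would likely reduce to the cleanest case by always deleting a first or last letter of $u$ once we know one of them has multiplicity one.
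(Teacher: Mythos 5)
Your opening move --- applying the pigeonhole principle to a shortest $u$-saturated sequence $x$ on $n_0$ letters with $|x|=\sat(u,n_0)<n_0$ to find a letter $a$ absent from $x$ --- is exactly the paper's first step. But the paper finishes in one line from there, and your proposal misses that line: the sequence $x$ \emph{itself} is already $u$-saturated on every alphabet of size $n>n_0$. Any letter $b$ of the larger alphabet either lies in $[n_0]$, in which case $u$-saturation of $x$ over $[n_0]$ already blocks adding it, or is fresh; and a fresh $b$ behaves identically to the missing letter $a$ with respect to insertion, since both are absent from $x$, so inserting either at any position violates $r$-sparsity or creates a copy of $u$ under exactly the same circumstances. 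This gives $\sat(u,n)\le\sat(u,n_0)$ for all $n>n_0$, hence $\sat(u,n)=O(1)$. No structural analysis of $u$ is needed.

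Your detour through the structure of $u$ and the attempted construction in the last step has a genuine gap. Deleting the first or last letter of $u$ need not produce an $r$-sparse sequence: for $u=abcbd$ the first and last letters each occur once, yet removing the first letter yields $bcbd$, which is not even $3$-sparse, so it cannot be a $u$-saturated sequence. Your remark that ``sparsity is automatic'' over $r-1$ letters conflates having few distinct letters with being $r$-sparse. More fundamentally, the facts you derive about $u$ (first and last letters of multiplicity one) do not by themselves yield, for each $n$, a bounded $u$-saturated sequence on an alphabet of size $n$; the difficulty you flag at the end of your write-up is real and your sketch does not resolve it. The observation that $x$ remains saturated on larger alphabets sidesteps all of this.
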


\begin{proof}
If there exists a positive integer $n_0$ such that $\sat(u,n_0)<n_0$, then assume that $x$ is a shortest $u$-saturated sequence on $n_0$ letters. By the Pigeonhole principle, some letter $a\in[n_0]$ is not in $x$, so $\sat(u,n)\le \sat(u,n_0)$ for all $n>n_0$.
\end{proof}

For most of the remainder of this section, the main goal is to demonstrate a linear versus constant dichotomy for the saturation functions of sequences with $2$ distinct letters. We prove several results to cover different cases of this dichotomy. The first result covers sequences in which the first $2$ letters are not the same and the last $2$ letters are not the same.

\begin{thm}\label{lasttwodiff}
For any sequence $u$ of length $\ell$ on $2$ distinct letters for which the first $2$ letters of $u$ are not the same and the last $2$ letters of $u$ are not the same, we have $\sat(u, n) \le 2 \ell n$. 
\end{thm}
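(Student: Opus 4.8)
The plan is to prove the bound by constructing an explicit $u$-saturated sequence $s$ on $n$ letters of length at most $2\ell n$. Write $u = u_1 u_2 \cdots u_\ell$ on two letters $\alpha, \beta$, and (using reversal symmetry, which preserves saturation) assume $u_1 = \alpha$, $u_2 = \beta$; the hypotheses say $u_1 \ne u_2$ and $u_{\ell-1} \ne u_\ell$. The case $\ell = 2$ is immediate (a single letter is $u$-saturated for $u = \alpha\beta$), so assume $\ell \ge 3$; note that then $\alpha$ occurs at least twice in $u$, which is consistent with the lower bound $\sat(u,n) \ge n$ coming from Lemma~\ref{lem:sat_first}.

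The construction is a concatenation $s = G_1 G_2 \cdots G_n$, where each $G_i$ is a gadget of length at most $2\ell$ built from $u$: concretely, $G_i$ is a maximal-length $u$-free $2$-sparse word on a pair of letters one of which is $i$. (Taking a $2$-sparse word rather than a literal copy of $u$ automatically handles the possibility that $u$ itself has equal adjacent letters, and a maximal such word has length at most $\ex(u,2) < 2\ell$.) One must choose (a) which pair of letters each $G_i$ sits on --- either pairwise disjoint pairs such as $\{2i-1,2i\}$, or cyclically consecutive pairs $\{i,i+1\}$; (b) the orientation of each gadget, i.e.\ which of its two letters plays the role of $u_1$ and whether it is a front or back truncation of a shortest $2$-sparse supersequence of $u$; and (c) possibly a single separator symbol between consecutive gadgets. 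These choices are made so that the seams between gadgets are $2$-sparse and do not themselves create a copy of $u$.

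I would then check the three defining properties. $2$-sparsity holds inside each gadget by construction and across seams by the choices in (b)--(c). For $u$-freeness: a copy of $u$ uses only two distinct letters, hence lives inside the projection of $s$ onto some pair $\{c,e\}$; with disjoint pairs this projection is either a single gadget (which is $u$-free by construction) or a word of the form $c^{\,p}e^{\,q}$, and since $u_1 \ne u_2$ and $u_{\ell-1}\ne u_\ell$ together with $\ell\ge 3$ force $u$ not to have the form $\alpha^{a}\beta^{b}$, no copy of $u$ appears; with consecutive pairs the projection can instead look like $c^{\,p_1}G\,e^{\,q_1}$ and one has to argue that prepending $c$'s and appending $e$'s to the $u$-free word $G$ still avoids $u$. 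For saturation: given a $2$-sparse-preserving insertion of a letter $c$ at a position $p$, any resulting copy of $u$ must use the inserted $c$; split on whether $p$ lies in or adjacent to $G_c$ --- in which case the inserted $c$ extends the maximal $u$-free word $G_c$ and so creates $u$ --- or $p$ is far from $G_c$, in which case the inserted $c$ must be combined with the occurrences of $c$ in $G_c$ and with occurrences of a carefully chosen partner letter taken from the gadgets between $G_c$ and $p$ (or, symmetrically, with the inserted $c$ playing the first or last letter of $u$ and the rest of $u$ supplied by the gadgets on one side of $p$). It is here that the hypotheses on the first two and last two letters of $u$ and the orientation choices get used.

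The main obstacle is the ``far'' case of the saturation check, and more precisely the tension between $u$-freeness and saturation: pairwise disjoint letter-pairs make $u$-freeness essentially immediate but make it delicate to force a copy of $u$ from an insertion made far from the relevant gadget, whereas overlapping pairs help the latter at the cost of a harder $u$-freeness argument. Resolving this will, I expect, require treating the gadgets at the two ends of the concatenation --- and insertions at the extreme positions of $s$ --- by a small dedicated argument, and coordinating (for instance alternating) the orientations of successive gadgets; the bulk of the write-up will be this bookkeeping rather than a single hard step.
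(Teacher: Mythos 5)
Your overall plan---concatenating maximal $u$-free two-letter alternations as gadgets, one per new letter, and checking $2$-sparsity, $u$-freeness, and saturation at the seams---is the right shape, and your ``Option~(2)'' for the far-from-$G_c$ insertion (the inserted $c$ plays $u_\ell$, with the rest of $u$ supplied by gadgets on one side) is in fact the argument the paper uses. But the proposal stops exactly where the work is: you explicitly flag the far case as the unresolved obstacle, you hedge between disjoint pairs and overlapping pairs, and you do not settle the orientation/seam choices. As written this is a plan, not a proof, and the open step is the one that carries the theorem.

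The paper closes the gap by making the construction inductive rather than a one-shot concatenation, with the new gadget's letter-pair being $\{x,y\}$ where $x$ is the \emph{last letter of the current prefix} $s$ and $y$ is the fresh letter: one appends $s_2(u)$ on $\{x,y\}$ minus its first letter. This single choice resolves both of your tensions simultaneously. For $u$-freeness, any copy of $u$ on two letters either lies on $\{x,y\}$ (impossible since $s_2(u)$ avoids $u$ and the first two letters of $u$ differ), or uses $y$ and some $z\ne x$ (then $y$ occurs once, as $u_\ell$, because the last two letters of $u$ differ, and $y$ may be replaced by the last $x$ of $s$ to get a copy inside $s$), or uses $x$ and some $z\ne y$ (then at most one letter of the copy lies in the appended block, again because $u_{\ell-1}\ne u_\ell$, and that letter may be replaced by the last $x$ of $s$). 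For saturation, the far case is now immediate from the inductive hypothesis: inserting $c\notin\{x,y\}$ anywhere in the appended block or beyond is, restricted to the alphabet of $s$, the same as appending $c$ to $s$, which is $2$-sparse (since $c\ne x$) and hence forces a copy of $u$ by $u$-saturation of $s$. Inserting $x$ or $y$ is handled by saturation of either $s$ or the copy of $s_2(u)$. So the ``carefully chosen partner letter from the gadgets in between'' you were worried about is never needed; the shared seam letter $x$ and the inductive structure do all the work, and the bound $|s'|\le |s| + 2\ell - 1$ gives $\sat(u,n)\le 2\ell n$.
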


\begin{proof}
Given a sequence $u$ on $2$ distinct letters with the first $2$ letters of $u$ not the same and last $2$ letters not the same, we define a $u$-saturated sequence $s_2(u)$ on $2$ distinct letters $a,b$ which is simply the longest alternation $abab\dots$ that avoids $u$. Clearly $s_2(u)$ is u-saturated, since $s_2(u)$ avoids $u$ and adding any new letter while maintaining $2$-sparsity produces a longer alternation, which by definition must contain $u$. Note that $s_2(u)$ has length at most $2\ell$, where $\ell$ is the length of $u$. 

In order to produce a $u$-saturated sequence on $n$ distinct letters, we proceed by induction. Let $s$ be a $u$-saturated sequence on an alphabet $A$ of $n-1$ distinct letters, let $x$ be the last letter of $s$, and let $y$ be a letter which does not occur in $s$. We append to the end of $s$ a copy of $s_2(u)$ on the letters $x,y$, minus the first letter. In particular, the terminal segment of the resulting sequence $s’$ is a copy of $s_2(u)$, where the first letter of the copy is the last letter of $s$. 

We claim that $s’$ is a $u$-saturated sequence on the alphabet $A \cup \left\{y\right\}$. First, note that $s’$ avoids $u$. By definition of $s_2(u)$ and the fact that the first $2$ letters of $u$ are not the same, there cannot be a copy of $u$ on the letters $x,y$. Moreover, if there was a copy of $u$ on $y$ and one of the other letters not equal to $x$, then $y$ would only occur once in the copy since the last $2$ letters of $u$ are not the same. Thus, $y$ could be replaced with the last letter of s (which is $x$) to produce a copy of $u$ in $s$, a contradiction. Finally, there can be no copy of $u$ on $x$ and one of the other letters not equal to $y$, or else there would be a copy of $u$ in $s$, since the last $2$ letters of $u$ are different. Thus, $s’$ avoids $u$.

Now, suppose that we add a new letter from $A \cup \left\{y\right\}$ to $s’$ in a way that maintains $2$-sparsity. If the new letter is $x$, then adding the new letter would introduce a copy of $u$ in $s'$, either in the copy of $s$ or the copy of $s_2(u)$ depending respectively on whether the new letter is added before or after the last letter in the copy of $s$. If the new letter is $y$, then adding it anywhere in $s'$ while maintaining $2$-sparsity would introduce a copy of $u$, by $u$-saturation of $s_2(u)$. If the new letter is not equal to $x$ or $y$, then adding it anywhere in $s'$ while maintaining $2$-sparsity would introduce a copy of $u$, by $u$-saturation of $s$. Thus, $s'$ is $u$-saturated, and it has length at most $2 \ell n$, so we have $\sat(u, n) \le 2 \ell n$, where $\ell$ is the length of $u$. 
\end{proof}

Next, we prove a useful lemma which allows us to bound the effect of doubling the last letter of a sequence on its saturation function.

\begin{lem}\label{lemma:on}
Let $k<r$ be nonnegative integers and suppose that the total number of occurrences of some $n-k$ letters of an $r$-sparse sequence $x$ on $n$ letters is $\ell$. Then
$$
|x|\le\frac{r\ell+rk-k}{r-k}.
$$
\end{lem}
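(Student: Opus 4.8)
The plan is to use $r$-sparsity to bound the number of occurrences of each \emph{individual} letter of $x$ in terms of $|x|$ itself, and then to solve the resulting self-referential inequality (the same technique used earlier in this section for sequences with $m_u \ge 2$).

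First I would fix notation: let $B$ be the set of $n-k$ letters whose occurrence counts in $x$ sum to $\ell$, and let $A$ be the complementary set, so $|A| = k$ and $|x| = \ell + \sum_{c \in A} f_x(c)$. The crux is the following per-letter estimate. In an $r$-sparse sequence, any two consecutive occurrences of a letter $c$ must be at least $r$ positions apart, since otherwise some window of $r$ consecutive entries would contain two copies of $c$. Hence, if $c$ occurs $f_x(c)$ times, its last occurrence sits at a position at least $1 + (f_x(c)-1)r$, so $|x| \ge r\,f_x(c) - (r-1)$, i.e.\ $f_x(c) \le \frac{|x|+r-1}{r}$. This inequality is valid even when $f_x(c) = 0$, which handles the degenerate cases uniformly. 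Summing over the $k$ letters of $A$ gives
$$\sum_{c \in A} f_x(c) \;\le\; \frac{k\,(|x|+r-1)}{r}.$$

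Finally I would substitute $\sum_{c \in A} f_x(c) = |x| - \ell$ into this display and clear denominators: $r|x| - r\ell \le k|x| + k(r-1)$, hence $(r-k)|x| \le r\ell + rk - k$, and dividing by the positive quantity $r-k$ yields the claimed bound $|x| \le \frac{r\ell + rk - k}{r-k}$. I do not anticipate any real obstacle; the only point requiring a little care is verifying that the per-letter bound and the final division are legitimate in the boundary cases $k = 0$ and letters of $A$ absent from $x$, which they are.
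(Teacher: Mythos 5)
Your proof is correct and follows essentially the same route as the paper: the per-letter bound $f_x(c) \le \frac{|x|+r-1}{r}$ is exactly the paper's $1 + \frac{|x|-1}{r}$, and the substitution and algebraic rearrangement are identical. Your writeup simply makes the $r$-sparsity argument for the per-letter bound and the handling of the edge cases more explicit.
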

\begin{proof}
Without loss of generality assume the alphabet is $a_1,a_2,\dots,a_k,a_{k+1},\dots,a_n$ where the total number of occurrences of $a_{k+1},a_{k+2},\dots,a_n$ is $\ell$. The number of occurrences of each of $a_1,a_2,\dots,a_k$ is at most $1+\frac{|x|-1}{r}$. So
$$
|x|\le\ell+k\left(1+\frac{|x|-1}{r}\right),
$$
or
$$
|x|(r-k)\le \ell r+kr-k.
$$
\end{proof}

Now, we prove a general result about the effect of doubling the last letter of a sequence on its saturation function. After proving this result, we are able to demonstrate the linear versus constant dichotomy for the saturation functions of sequences with $2$ distinct letters.

\begin{thm}\label{addlastrepeat}
If sequence $u’$ on $r$ distinct letters is obtained from sequence $u$ by appending a new occurrence of the last letter of $u$ to the end of $u$, then $\sat(u’,n) < \sat(u,n)+rn$.
\end{thm}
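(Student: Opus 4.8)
The plan is to start from a shortest $u$-saturated sequence on $[n]$ and append a short tail that brings in every letter. Fix a $u$-saturated sequence $x$ on $[n]$ with $|x|=\sat(u,n)$, let $b$ be the last letter of $u$ (so $u'=ub$ ends in two copies of $b$, and deleting the last letter of $u'$ returns $u$), and set $x'=xT$, where $T$ is a permutation of the $n$-letter alphabet written in an order that keeps $x'$ $r$-sparse — this is possible because the last $r-1$ letters of $x$ are distinct, so only finitely many orderings of the first $r-1$ symbols of $T$ are excluded; if a bit more room is required near the junction of $x$ and $T$ (or for the few letters occurring at the very end of $x$), I would permit $T$ to carry $O(r)$ further symbols, and if the tail must be enlarged, Lemma~\ref{lemma:on} is available to keep $|x'|$ under control. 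In every case $|x'|\le \sat(u,n)+n+O(r)$, which is strictly below $\sat(u,n)+rn$ once $n$ is not tiny, and the small values of $n$ can be checked by hand.

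The first thing to verify is that $x'$ is $u'$-free. Suppose it contained a copy of $u'$ and let $\beta$ be the letter playing the role of $b$; the last letter of the copy is an occurrence of $\beta$ that cannot lie inside the $x$-block (otherwise $x\supseteq u'\supseteq u$, contradicting $u$-freeness of $x$), so it lies in $T$. Because $u'$ ends with two $\beta$'s and $T$, being a permutation, contains $\beta$ only once, the penultimate letter of the copy lies in the $x$-block; but then the copy with its last letter deleted — a copy of $u$ — lies entirely inside $x$, a contradiction. Hence $x'$ avoids $u'$.

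For $u'$-saturation I would split insertions into those landing in the $x$-block and those landing in $T$. An insertion at a gap inside $x$ that is $r$-sparse in $x'$ is a fortiori $r$-sparse in $x$, so by $u$-saturation of $x$ it creates a copy of $u$ living inside $x$ together with the inserted symbol; this copy ends by position $|x|+1$, so the occurrence of $\beta=\text{(image of }b)$ at its tail is followed, somewhere in $T$, by another occurrence of $\beta$, which upgrades the copy of $u$ to a copy of $u'$. For an insertion of a symbol $c$ at a gap inside $T$, write the induced split as $T=T_1T_2$. By $u$-saturation of $x$, for every symbol $d$ not among the last $r-1$ symbols of $x$ the sequence $xd$ contains a copy of $u$ ending in $d$, i.e.\ $x$ contains a copy $\sigma_d$ of $u$ with its last letter removed that can be completed to $u$ by appending $d$; a short case split on whether $b$ occurs once or at least twice in $u$ shows $\sigma_d$ can in fact be completed \emph{twice} by $d$, so $\sigma_d$ followed by any two later occurrences of $d$ is a copy of $u'$. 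Since $c$ occurs once in $T$ and a fresh $c$ has just been inserted, there are (at least) two occurrences of $c$ after $\sigma_c$, which sits in the $x$-block, and we obtain a copy of $u'$. This disposes of every $c$ outside the last $r-1$ symbols of $x$; the at most $r-1$ remaining ``boundary'' symbols are handled by a separate local argument, using that each such symbol already occurs once near the end of $x$ and once in $T$ (together with the extra symbols placed in $T$ if needed) to force a copy of $u'$.

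The main obstacle is precisely this last point: for the letters occurring among the last $r-1$ positions of $x$, $u$-saturation of $x$ does not directly supply a poised copy $\sigma_c$, because appending such a $c$ to $x$ may violate $r$-sparsity — so these insertions must be argued by hand. Closely related is the bookkeeping that fixes an admissible ordering of $T$ (and of the $O(r)$ optional extra symbols) so that $x'$ is $r$-sparse while these boundary insertions still produce $u'$; this is where invoking Lemma~\ref{lemma:on} to bound the resulting length, and possibly the parity/structure results of Section~\ref{s:sat_alt} for alternations, would come in. Everything else reduces to exhibiting the right copies of $u$ and $u'$, which the construction makes routine.
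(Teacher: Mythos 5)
Your overall construction matches the paper's: take a minimal $u$-saturated $x$, append a permutation of the alphabet as a tail, argue $u'$-avoidance via the fact that $u'$ ends in a repeat while the tail is repeat-free, handle insertions into the $x$-block by $u$-saturation followed by one tail letter, and handle insertions of a non-boundary letter $c$ into the tail using that two copies of $c$ now sit after $x$. These parts are correct and essentially identical to the paper's. (One small quibble: your ``case split on whether $b$ occurs once or at least twice in $u$'' is unnecessary—the copy of $u$ created by appending $c$ to $x$ must \emph{end} at the appended position, so it automatically ends in $c$, and any later $c$ extends it to $u'$.)

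The genuine gap is exactly the one you flag and then wave at: the at most $r-1$ letters occurring among the last $r-1$ positions of $x$, for which $u$-saturation of $x$ gives no ``poised'' copy of $u$. You propose padding $T$ with ``$O(r)$ further symbols'' and a ``separate local argument,'' but this cannot work as stated. There is no local argument that forces a $u'$ from these insertions, and $O(r)$ extra symbols is nowhere near enough: the tail may need $\Theta(rn)$ extra letters. The paper's resolution is a different kind of move entirely: instead of \emph{proving} that inserting a boundary letter creates $u'$, it \emph{forces} this by greedily inserting occurrences of exactly those $r-1$ boundary letters into $x'$, maintaining $u'$-freeness and $r$-sparsity, until no further insertion is possible. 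Saturation with respect to those letters is then automatic by construction; the only thing left to control is the length, and that is where Lemma~\ref{lemma:on} actually enters (applied with $k=r-1$ to the tail, whose other $n-r+1$ letters each occur exactly once), giving a tail of length at most $rn-r+1$ and hence $\sat(u',n)<\sat(u,n)+rn$. So your estimate $|x'|\le\sat(u,n)+n+O(r)$ is too optimistic—the theorem's $rn$ slack is needed—and, more importantly, without the greedy-extension idea your proof does not close.
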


\begin{proof}
Let $s$ be a $u$-saturated sequence of minimal length on an alphabet of size $n$. Let $A = \left\{a_1,a_2,\dots,a_n\right\}$ be the alphabet of $s$, suppose without loss of generality that the last $r-1$ letters of $s$ are $a_{n-r+2},\dots,a_n$ in order, and let $s’$ be obtained from $s$ by adding $a_1,a_2,\dots,a_n$ to the end. We claim that $s’$ avoids $u’$. Indeed, if $s’$ contained a copy of $u’$, then removing the last letter would produce a copy of $u$ in $s$, since the last $2$ letters of $u’$ are the same. 

Moreover, we claim that it is impossible to add any letter $a_1,a_2,\dots,a_{n-r+1}$ to $s’$ without producing a copy of $u’$ or violating $r$-sparsity. If we add an occurrence of the letter $a_i$ before the end of the copy of $s$ in $s’$ without violating $r$-sparsity, then it produces a copy of $u$ in the copy of $s$ by definition, which can be extended to a copy of $u’$ by using one of the last $n$ letters of $s’$. For $i\le n-r+1$, we can append $a_i$ to $s$ without violating $r$-sparsity. So, if we add an occurrence of $a_i$ with $i \le n-r+1$ after the end of the copy of $s$ in $s’$, then there are $2$ occurrences of $a_i$ after the end of the copy of $s$ in $s’$. The first occurrence produces a copy of $u$ with last letter $a_i$ by $u$-saturation, and the second occurrence produces a copy of $u’$ since the last $2$ letters of $u’$ are the same. 

Let $s''$ be the sequence obtained from $s'$ by adding occurrences of $a_{n-r+2},\dots,a_n$ to $s'$ while maintaining $u'$-avoidance and $r$-sparsity until it is impossible to do so. Note that all the added letters are after $s$, because otherwise we would introduce a copy of $u'$ or violate $r$-sparsity. By Lemma~\ref{lemma:on} the subsequence $t$ of $s''$ after the copy of $s$ has length at most $r n-r+1$, because each of $a_1,\dots,a_{n-r+1}$ has exactly one occurrence in $t$.
Thus, $s''$ has length less than $\sat(u,n)+r n$ and it avoids $u’$ by definition. Moreover, we cannot add any of $a_{n-r+2},\dots,a_n$ to $s''$ without producing a copy of $u’$, by definition. Since we cannot add any occurrences of $a_1,a_2,\dots,a_{n-r+1}$ to $s’$ without producing a copy of $u’$, we cannot add any occurrences of $a_1,a_2,\dots,a_{n-r+1}$ to $s''$ without producing a copy of $u’$. Therefore, $s''$ is $u’$-saturated and we have $\sat(u’,n) \le \sat(u,n)+r n$.
\end{proof}

Using Theorem~\ref{addlastrepeat} and Theorem~\ref{lasttwodiff}, we obtain the following corollary.

\begin{cor}
If sequence $u$ of length $\ell$ has $2$ distinct letters, then $\sat(u,n) \le 2 \ell n$.
\end{cor}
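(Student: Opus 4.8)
The plan is to reduce an arbitrary $2$-letter sequence $u$ to one that is covered by Theorem~\ref{lasttwodiff} by stripping the repeated letters at its two ends, bound that reduced sequence, and then rebuild $u$ one letter at a time using Theorem~\ref{addlastrepeat} together with its mirror image.

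First I would record the easy observation that $\sat$ is invariant under reversing $u$: if $s$ is $u$-saturated, then the reverse of $s$ is $\mathrm{rev}(u)$-saturated, since $r$-sparsity, $u$-freeness, and the effect of inserting a letter are all preserved under reversal. Consequently Theorem~\ref{addlastrepeat}, applied to the reverse of a sequence, tells us that \emph{prepending} a new copy of the first letter to a $2$-letter sequence $u$ raises $\sat(u,n)$ by less than $2n$.

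Next, given $u$ of length $\ell$ on letters $a,b$, let $p\ge 1$ and $q\ge 1$ be the lengths of the maximal runs at the start and at the end of $u$, and let $u_0$ be obtained from $u$ by deleting all but one letter from each of these two runs, so that $u_0$ has length $\ell_0=\ell-(p-1)-(q-1)$. Since $u$ uses both letters, so does $u_0$ (here one uses that $u$ is not a single run, which is automatic because it has $2$ distinct letters), and by construction the first two letters of $u_0$ differ and so do the last two. Hence Theorem~\ref{lasttwodiff} gives $\sat(u_0,n)\le 2\ell_0 n$. Now apply Theorem~\ref{addlastrepeat} a total of $q-1$ times to restore the final run, and then apply its reversed form $p-1$ times to restore the initial run; each of the $(p-1)+(q-1)$ steps keeps the alphabet size at $2$ and increases the saturation function by at most $2n$. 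Summing the contributions yields $\sat(u,n)\le 2\ell_0 n + 2n\big((p-1)+(q-1)\big)=2\ell n$, as claimed.

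There is no real obstacle here beyond bookkeeping, since the two cited theorems do all the work; the only points that need a sentence of justification are the reversal symmetry of $\sat$ and the verification that the stripped sequence $u_0$ actually satisfies the hypotheses of Theorem~\ref{lasttwodiff} (in particular that it still has two distinct letters and differs at both ends). One should also dispatch the degenerate cases $p=1$ or $q=1$, where the corresponding batch of applications is empty and the bound holds immediately, and note that the cited results remain valid for the small values of $n$ involved.
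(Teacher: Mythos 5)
Your argument is correct and is essentially the paper's proof, just unrolled: the paper proceeds by induction on $\ell$, stripping one repeated end letter at a time via Theorem~\ref{addlastrepeat} (and its reversal) until Theorem~\ref{lasttwodiff} applies, whereas you strip both end runs down to a core $u_0$ at once and then add the letters back; the two bookkeeping styles telescope to the identical bound $2\ell n$.
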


\begin{proof}
We prove that $\sat(u, n) \le 2 \ell n$ for all sequences $u$ of length $\ell$ with $2$ distinct letters by induction on $\ell$. It is true for $\ell = 2$ since in this case we must have $u$ isomorphic to $ab$, so $\sat(u, n) = 1$. If $\ell > 2$, then suppose that we have $\sat(v, n) \le 2 \ell' n$ for all sequences $v$ on $2$ distinct letters of length $2 \le \ell' < \ell$. If the last $2$ letters of $u$ are the same, then let $u'$ be obtained from $u$ by removing the last letter. We have $\sat(u,n) \le \sat(u',n)+2n-2 < 2 \ell n$ by Theorem~\ref{addlastrepeat} and the inductive hypothesis. If the last $2$ letters of $u$ are different, then consider the first $2$ letters of $u$. If they are different, then we have $\sat(u, n) \le 2 \ell n$ by Theorem~\ref{lasttwodiff}. Otherwise if they are the same, then let $u''$ be obtained from $u$ by removing the first letter. Symmetry, Theorem~\ref{addlastrepeat}, and the inductive hypothesis imply that $\sat(u, n) \le \sat(u'',n)+2n-2 < 2 \ell n$.
\end{proof}

Finally, this implies that the sequence saturation function exhibits a linear versus constant dichotomy when the forbidden sequences have $2$ distinct letters.

\begin{cor}
For any sequence $u$ with $2$ distinct letters, we have $\sat(u,n) = \Theta(n)$ or $\sat(u, n) = O(1)$.
\end{cor}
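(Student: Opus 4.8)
The plan is to combine the two preceding results with the trivial linear lower bound. The statement to prove is the dichotomy: for any sequence $u$ on $2$ distinct letters, either $\sat(u,n) = \Theta(n)$ or $\sat(u,n) = O(1)$. The upper bound $\sat(u,n) \le 2\ell n$ from the previous corollary already rules out any super-linear behavior, so the only question is whether the function is $\Theta(n)$ or stays bounded. First I would invoke that corollary to get $\sat(u,n) = O(n)$ for every $u$ of length $\ell$ on $2$ distinct letters. Then the task reduces to showing that if $\sat(u,n)$ is not $O(1)$, then $\sat(u,n) = \Omega(n)$; equivalently, that $\sat(u,n)$ is either bounded or at least linear.

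The key case split is on the multiplicities of the first and last letters of $u$. If the first letter of $u$ occurs multiple times or the last letter of $u$ occurs multiple times, then Lemma~\ref{lem:sat_first} gives $\sat(u,n) \ge n$, so together with the $O(n)$ upper bound we get $\sat(u,n) = \Theta(n)$. The remaining case is that both the first and the last letter of $u$ occur exactly once. Since $u$ has exactly $2$ distinct letters, this forces $u$ to have the form $a\,b\,b\cdots b\,a$ where the middle consists entirely of copies of $b$ — that is, $u = a b^t a$ for some $t \ge 0$ (the cases $t=0$ giving $aa$, which is impossible as a $2$-sparse-relevant forbidden sequence with $2$ distinct letters, so in fact $u = aba$, $u=abba$, etc., i.e. $t \ge 1$), or $u$ has length $2$ and equals $ab$. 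Wait — I should be careful: first and last each occurring once, with two distinct letters, means one letter (say $a$) appears exactly at the two ends, hence exactly twice total, and the other letter $b$ fills the interior, appearing $\ell - 2$ times. So $u = a b^{\ell-2} a$.

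For sequences of this form I would argue $\sat(u,n) = O(1)$ directly: take the fixed-length sequence $s_2(u)$, the longest alternation $abab\cdots$ on two letters avoiding $u = ab^{\ell-2}a$ — since $u$ is not itself an alternation once $\ell - 2 \ge 2$, one must check what the longest $u$-avoiding alternation is, but in any case it has length bounded by a constant depending only on $\ell$. Actually the cleanest route: this case is exactly the one handled by the semisaturation dichotomy statement in the abstract, and for saturation one can exhibit a constant-size $u$-saturated sequence on $2$ symbols (using only $2$ of the $n$ available letters) because adding any third letter $c$ creates at most one new occurrence of each of the two "end" roles, and one verifies no copy of $u$ can use $c$ — this mirrors the argument in Theorem~\ref{lasttwodiff}. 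Thus $\sat(u,n) = O(1)$ in this case. Combining: every $u$ on $2$ distinct letters falls into one of the two regimes, which is the claimed dichotomy.

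The main obstacle I anticipate is the borderline $u = ab^{t}a$ case: one must genuinely confirm that a bounded-length $u$-saturated sequence exists on an $n$-letter alphabet for all $n$, i.e. that a constant number of symbols suffices and that no letter outside those symbols can be inserted without creating $u$. Handling the insertion of a brand-new letter (which appears only once, hence can play only the role of a singly-occurring letter of $u$, namely one of the two $a$'s) and showing this is absorbed by the existing alternation is the delicate verification; everything else is bookkeeping with the already-established Lemma~\ref{lem:sat_first} and the $O(n)$ upper bound.
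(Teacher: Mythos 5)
Your high-level plan is reasonable, but there is a concrete error in the case analysis for the complement case, and the fix actually makes the argument much shorter. You split on whether the first or last letter of $u$ occurs multiple times (Lemma~\ref{lem:sat_first} then gives $\sat(u,n)\ge n$, hence $\Theta(n)$ via the $O(n)$ upper bound), and you then try to handle the complement case ``first and last each occur exactly once'' by asserting $u = ab^{\ell-2}a$. But $ab^{\ell-2}a$ has first letter $a$ occurring \emph{twice} (once at each end), so it is not in the complement case at all --- by Lemma~\ref{lem:sat_first} it has $\sat(u,n)\ge n$, contradicting your claim that $\sat(u,n)=O(1)$ for these sequences. The correct characterization of the complement case is: if $u$ has $2$ distinct letters, length $\ge 2$, and both the first and last letters occur exactly once, then $u=ab$ (all interior positions would have to repeat one of the two letters, forcing $\ell=2$). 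That case is already handled by $\sat(ab,n)=\min(n,1)=1=O(1)$, and the dichotomy follows. Your self-correcting aside (``or $u$ has length $2$ and equals $ab$'') was on the right track before you reversed course.

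Separately, the paper's own route is even shorter and avoids the case split entirely: it simply combines the preceding corollary $\sat(u,n)\le 2\ell n = O(n)$ with the earlier general lemma that \emph{for every} sequence $u$, either $\sat(u,n)\ge n$ for all $n$ or $\sat(u,n)=O(1)$. In the first branch the $O(n)$ bound immediately gives $\Theta(n)$; in the second branch there is nothing to prove. Your approach, once repaired, still works and has the minor virtue of not needing the pigeonhole dichotomy lemma, but the repaired version reveals that the complement case is trivially $u=ab$; the speculative verification you sketch for ``$ab^t a$'' is both unnecessary and false.
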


It is an open problem to extend this dichotomy to sequences with more than $2$ distinct letters. We obtained the following lemma as a step toward this extension.

\begin{lem}\label{lem:second_too_far}
    For any sequence $u$ with $r\ge 3$ distinct letters where its first letter appears exactly once and
    the next $r-1$ letters appear more than once, 
    we have $\sat(u,n)\ge n$ for every $n\ge r$.
\end{lem}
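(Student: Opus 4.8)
The plan is to prove that every $u$-saturated sequence on an alphabet of size $n\ge r$ must contain all $n$ letters; since such a sequence has length at least $n$, this yields $\sat(u,n)\ge n$. Write $u=u_1u_2\cdots u_\ell$. By hypothesis $u_1$ occurs exactly once in $u$, each of $u_2,\dots,u_{\ell_2-1}$ occurs at least twice, $u_{\ell_2}$ occurs exactly once, and $\ell_2\ge r+1$. Let $B$ be the set of letters of $u$ other than $u_1$; since $u_1$ appears only in position $1$, the letters occurring in $u_2\cdots u_\ell$ are exactly $B$, and $|B|=r-1$. Suppose for contradiction that $x$ is a $u$-saturated sequence on $[n]$ that omits some letter $w$. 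First I would note that, because $w$ does not occur in $x$ at all, inserting $w$ anywhere in $x$ keeps it $r$-sparse (each of the at most $r-1$ neighbours of $w$ on either side is a letter of $x$, hence not equal to $w$); so by $u$-saturation, inserting $w$ at \emph{every} position produces a copy of $u$. The task is to exhibit one position where it does not.

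Next I would pick the insertion point. Since inserting $w$ somewhere creates a copy of $u$ inside a word of length $|x|+1$, we have $|x|\ge\ell-1\ge\ell_2-1\ge r$, so $x_1,\dots,x_r$ exist and are $r$ distinct letters by $r$-sparsity. Insert $w$ immediately after $x_r$, obtaining $x'=x_1\cdots x_r\,w\,x_{r+1}\cdots x_{|x|}$, and let $C$ be a copy of $u$ in $x'$; since $x$ is $u$-free, $C$ must use the new occurrence of $w$. As $w$ occurs only once in $x'$, it plays the role of a letter occurring exactly once in $u$: the role of $u_1$, of $u_{\ell_2}$, or of some $u_i$ with $i>\ell_2$. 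If $w$ plays the role of some $u_i$ with $i\ge\ell_2$, then the $i-1$ entries of $C$ preceding $w$ occupy $i-1$ distinct positions among $x_1,\dots,x_r$, so $i-1\le r$; combined with $i\ge\ell_2\ge r+1$ this forces $i=\ell_2=r+1$, whence $C_1\cdots C_r$ is a length-$r$ subsequence of $x_1\cdots x_r$ and therefore equals it, so $x_1\cdots x_r$ is isomorphic to $u_1\cdots u_r$. But $x_1,\dots,x_r$ are distinct, so $u_1,\dots,u_r$ would be $r$ distinct letters, i.e.\ all the letters of $u$; then $u_{\ell_2}=u_{r+1}$ would already occur among $u_1,\dots,u_r$, hence at least twice in $u$, contradicting that $u_{\ell_2}$ occurs exactly once. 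Hence $w$ plays the role of $u_1$.

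Once $w$ plays the role of $u_1$ and sits right after $x_r$, the copy has the form $C=w\,z$, where $z$ is a subsequence of $x_{r+1}\cdots x_{|x|}$ isomorphic to $u_2\cdots u_\ell$, so the letters occurring in $z$ are precisely the set $B$. I would then argue that $x_1,\dots,x_r\in B$: if some $x_q$ with $q\le r$ were not in $B$, then $x_q$, which precedes every entry of $z$ in $x$, together with $z$ would be a subsequence of $x$ isomorphic to $u$ (with $x_q$ in the role of $u_1$ — valid precisely because $x_q$ differs from every letter of $z$), contradicting that $x$ is $u$-free. So $x_1,\dots,x_r\in B$; but those are $r$ distinct letters while $|B|=r-1$, a contradiction. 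This contradiction forces $x$ to contain every letter of $[n]$, giving $|x|\ge n$ and hence $\sat(u,n)\ge n$.

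I expect the main obstacle to be the elimination of the case that $w$ plays the role of $u_{\ell_2}$ when $\ell_2=r+1$, which is the one place the hypothesis $\ell_2>r$ is genuinely used: $r$-sparsity forces $x_1\cdots x_r$ to be a word on $r$ distinct letters, whereas the fact that position $\ell_2$ carries a letter occurring only once forbids $u_1\cdots u_r$ from being such a word, and these two facts collide. The rest — counting the letters that precede the inserted $w$, and prepending a ``free'' letter to the tail copy $z$ — should follow routinely from $r$-sparsity and $u$-freeness of $x$.
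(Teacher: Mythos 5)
Your proof is correct and follows the same strategy as the paper's: insert a missing letter at an early position, use the fact that it occurs only once to force it to play the role of $u_1$ in the resulting copy, and then derive a contradiction from the $r$-sparsity of the prefix versus the $r-1$ distinct letters appearing in the tail of the copy. The only variation is the insertion point --- the paper inserts between positions $\ell_2-2$ and $\ell_2-1$ while you insert after $x_r$ --- which shifts which boundary case (here $\ell_2 = r+1$) needs a separate argument but leaves the overall structure of the proof unchanged.
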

\begin{proof}
Suppose to the contrary that $\sat(u,n_0)<n_0$ for some $n_0\ge r$. We write $u=fu'$ where $f$ is a letter that does not appear in subsequence $u'$. Let $x$ be a $u$-saturated sequence of length $\sat(u,n_0)$ on $n_0$ letters. Note that $|x|\ge r$ because otherwise after adding a new letter to $x$ the length of $x$ will be at most $r$ and less than $|u|$, and $x$ would not contain $u$. If we insert a new letter between $x_{r-1}$ and $x_r$, by definition a copy of $u$ is introduced and it must start with the inserted letter. Therefore, there is a copy of $u'$ between $x_r$ and the last letter of $x$, and the copy consists of exactly $r-1$ distinct letters. We claim that the first letter of $u'$ is $x_r$. If the first letter of $u'$ is after $x_r$, since by $r$-sparsity $x_1,x_2,\dots,x_r$ are all distinct one of them does not appear in $u'$ and $x$ already contained $u$. For $x$ to be $u$-free each of $x_1,x_2,\dots,x_{r-1}$ appear in $u'$, so there are at most $r-2$ distinct letters among $x_1,x_2,\dots,x_{r-1}$, violating $r$-sparsity of $x$. Therefore the assumption that $\sat(u,n_0)<n_0$ for some $n_0\ge r$ is false, and $\sat(u,n)\ge n$ for every $n\ge r$.
\end{proof}
\begin{cor}
Suppose that $u$ is a sequence of length at least $3$. 
If less than $3$ distinct letters of $u$ appear just once, then $\sat(u,n)\ge n$.
\end{cor}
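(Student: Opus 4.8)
The plan is to break into cases according to the number $t$ of distinct letters of $u$ that occur exactly once; by hypothesis $t\in\{0,1,2\}$. In each case I reduce to one of the results already established: the bound $\sat(u,n)\ge(m_u-1)n$ (valid for large $n$) when every letter of $u$ repeats, Lemma~\ref{lem:sat_first}, or Lemma~\ref{lem:second_too_far}. Where a bound is obtained only for large $n$ (or only for $n\ge r$), I invoke the dichotomy lemma stating that $\sat(u,n)$ is either $\ge n$ for every positive integer $n$ or is $O(1)$: since each of these bounds shows $\sat(u,n)$ is unbounded, the dichotomy lemma promotes it to hold for every $n$.

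When $t=0$, every letter of $u$ appears at least twice, so $m_u\ge 2$ and the corresponding lemma gives $\sat(u,n)\ge(m_u-1)n\ge n$ for all sufficiently large $n$; being unbounded, $\sat(u,n)\ge n$ for all $n$ by the dichotomy lemma.

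When $t\in\{1,2\}$, I first dispose of the case in which the first letter of $u$ or the last letter of $u$ occurs more than once: then Lemma~\ref{lem:sat_first} gives $\sat(u,n)\ge n$ outright. So I may assume both the first and the last letter of $u$ occur exactly once. Because $u$ has length at least $3$, the first and last letters are distinct (if they were equal, that letter would occur at least twice), so they are two different letters each occurring exactly once; hence $t=2$ and these are precisely the two letters occurring once. Writing $r$ for the number of distinct letters, the other $r-2$ letters occur at least twice, and $r\ge 3$ since $r=2$ would force $u$ to have length $2$. Counting occurrences gives $|u|\ge 2+2(r-2)=2r-2>r$. Thus the first letter of $u$ occurs exactly once and the second letter occurring exactly once --- namely the last letter --- sits at position $|u|>r$, so Lemma~\ref{lem:second_too_far} applies and yields $\sat(u,n)\ge n$ for every $n\ge r$, hence for every $n$ by the dichotomy lemma.

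All steps are elementary, so I do not expect a genuine obstacle. The one place needing a little care is the final case: one must check that ``first and last letters both occur once'' really forces $t=2$ and $r\ge 3$, and then that the length inequality $|u|>r$ holds, so that the hypotheses of Lemma~\ref{lem:second_too_far} are met. I would lean on the dichotomy lemma rather than argue directly about small $n$ (in particular $n<r$), which keeps the boundary cases painless.
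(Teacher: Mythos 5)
Your proof is correct and follows essentially the same route as the paper's: dispose of the case where the first or last letter of $u$ repeats via Lemma~\ref{lem:sat_first}, and otherwise observe that both endpoints occur exactly once, are distinct (since $|u|\ge 3$), forcing $t=2$, $r\ge 3$, and $|u|\ge 2r-2>r$, so Lemma~\ref{lem:second_too_far} applies. Your handling of $t=0$ via the $m_u$ bound is more roundabout than necessary --- the paper absorbs $t=0$ into the Lemma~\ref{lem:sat_first} branch, since with $t=0$ the first letter automatically repeats --- but it is valid. One place where your write-up is actually tighter than the paper's: Lemma~\ref{lem:second_too_far} only yields $\sat(u,n)\ge n$ for $n\ge r$, and you explicitly invoke the dichotomy lemma to promote this to all positive integers $n$, whereas the paper's proof leaves the range $n<r$ unaddressed; that is a genuine (if minor) gap that your argument closes.
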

\begin{proof}
       If the first or last letter of $u$ appears multiple times, then by Lemma~\ref{lem:sat_first} we have $\sat(u,n)\ge n$. Otherwise suppose that both the first and last letters of $u$ appear just once, and all other letters of $u$ appear multiple times. Since $|u|\ge3$, there is at least one letter appearing more than once, so the length of $u$ is greater than the number of distinct letters of $u$. By Lemma~\ref{lem:second_too_far}, we have $\sat(u,n)\ge n$.
\end{proof}

\begin{cor}
Suppose that $u$ is a sequence of length at least $3$. 
If $\sat(u,n)=O(1)$, then $u$ has at least $3$ distinct letters that appear just once.
\end{cor}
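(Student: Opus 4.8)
The plan is to observe that this corollary is simply the contrapositive of the immediately preceding corollary. Concretely, suppose $u$ is a sequence of length at least $3$ such that fewer than $3$ distinct letters of $u$ appear exactly once; we want to conclude $\sat(u,n)$ is not $O(1)$. Applying the preceding corollary verbatim gives $\sat(u,n) \ge n$ for every $n$ (in the relevant range of $n$, i.e. $n \ge r$ where $r$ is the number of distinct letters of $u$, which suffices since we only care about the asymptotic behavior as $n \to \infty$). Since $n \to \infty$, the bound $\sat(u,n) \ge n$ is incompatible with $\sat(u,n) = O(1)$. Hence if $\sat(u,n) = O(1)$, then $u$ must have at least $3$ distinct letters that appear exactly once.

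The only thing to spell out is the logical structure: let $P$ be the statement "at least $3$ distinct letters of $u$ appear exactly once" and $Q$ the statement "$\sat(u,n) \ge n$ for all sufficiently large $n$". The preceding corollary says $\neg P \Rightarrow Q$. Since $Q$ rules out $\sat(u,n) = O(1)$, we get $\sat(u,n) = O(1) \Rightarrow \neg Q \Rightarrow P$, which is exactly the claim. I would present this in two or three sentences, explicitly invoking the preceding corollary and the trivial observation that an unbounded-below-by-$n$ function is not $O(1)$.

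There is no real obstacle here — the entire content was carried by Lemma~\ref{lem:sat_first} and Lemma~\ref{lem:second_too_far} and their corollary. The one point worth a moment's care is ensuring the hypothesis "length at least $3$" is used consistently (it is needed so that the preceding corollary applies, since that corollary in turn invokes Lemma~\ref{lem:second_too_far} whose hypothesis forces the length of $u$ to exceed its number of distinct letters); I would simply note that we are applying the preceding corollary under exactly its stated hypotheses, so nothing new needs to be verified.
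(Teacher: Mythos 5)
Your proof is correct and matches the paper's intent exactly: the paper gives no separate proof for this corollary, treating it as the immediate contrapositive of the preceding corollary, which is precisely the observation you make. Nothing further is needed.
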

\section{Semisaturation of sequences}\label{s:ssat}

As with the saturation function of sequences, we start with some simple results about semisaturation. 

\begin{lem}
    If $u$ has length $r$ and every letter occurs once in $u$, then $\ssat(u, n) = \min(n, r-1)$. 
\end{lem}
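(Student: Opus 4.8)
The plan is to prove the two bounds $\ssat(u,n)\le\min(n,r-1)$ and $\ssat(u,n)\ge\min(n,r-1)$ separately, mirroring the proof already given for the corresponding statement about $\sat$. Here $u$ is isomorphic to $a_1a_2\cdots a_r$, so "contains a copy of $u$'' simply means "contains $r$ distinct letters in some order'', and every sequence of exactly $r$ distinct letters is a copy of $u$.

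For the upper bound, the cleanest route is to observe that a $u$-saturated sequence is automatically $u$-semisaturated: it is $r$-sparse, and since it is $u$-free, any copy of $u$ created by inserting a letter is necessarily a \emph{new} copy. Hence $\ssat(u,n)\le\sat(u,n)$, and by the earlier lemma $\sat(u,n)=\min(n,r-1)$. (If one prefers a self-contained construction in the main range $n\ge r-1$: the sequence $a_1a_2\cdots a_{r-1}$ of $r-1$ distinct letters is $r$-sparse because its length is less than $r$; inserting any letter of the alphabet yields a sequence of length exactly $r$, which is either non-$r$-sparse — when the inserted letter already occurs, since then fewer than $r$ distinct letters fill $r$ positions — or consists of $r$ distinct letters and is therefore a new copy of $u$.)

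For the lower bound, let $s$ be any $u$-semisaturated sequence on the alphabet $[n]$ and suppose for contradiction that $|s|<\min(n,r-1)$. Since $|s|<n$, some letter $c\in[n]$ does not occur in $s$; insert $c$, say at the front, to obtain $s'$. Then $|s'|=|s|+1\le\min(n,r-1)\le r-1<r$, so $s'$ is (vacuously) $r$-sparse, and $s'$ has at most $|s|+1\le r-1<r$ distinct letters, so $s'$ contains no copy of $u$ at all. Thus inserting $c$ neither violates $r$-sparsity nor creates a new copy of $u$, contradicting that $s$ is $u$-semisaturated. Hence every $u$-semisaturated sequence has length at least $\min(n,r-1)$, and together with the upper bound this gives $\ssat(u,n)=\min(n,r-1)$.

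There is no real obstacle here; the argument is a routine adaptation of the $\sat$ case. The only two points that need a moment of care are: (i) that "$u$-free'' upgrades "induces a copy of $u$'' to "induces a new copy of $u$'', which is what lets the upper bound transfer from $\sat$ to $\ssat$; and (ii) the bookkeeping in the lower bound that a sequence of length less than $r$ using fewer than $r$ distinct letters is simultaneously $r$-sparse and $u$-free, so that the reinserted missing letter is genuinely "harmless.''
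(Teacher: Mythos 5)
Your proof is correct and essentially follows the paper's approach: both use $\ssat(u,n)\le\sat(u,n)=\min(n,r-1)$ for the upper bound, and both establish the lower bound by noting that a too-short semisaturated sequence must be missing a letter, which could then be inserted without violating $r$-sparsity (the result having length $<r$) or creating a copy of $u$ (the result having fewer than $r$ distinct letters). The only cosmetic difference is that the paper splits the lower bound into the cases $n\le r-1$ and $n\ge r$, while you run a single contradiction argument that handles both cases at once.
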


\begin{proof}
    We have $\ssat(u, n) \le \min(n, r-1)$ since $\ssat(u, n) \le \sat(u, n) = \min(n, r-1)$. To see that $\ssat(u, n) \ge \min(n, r-1)$, we suppose that $s$ is a $u$-semisaturated sequence on an alphabet of size $n$ and split into two cases. If $n \le r-1$, then every letter in the alphabet must occur in $s$, since any missing letter could be added to the beginning of $s$ without violating $r$-sparsity or inducing a new copy of $u$. If $n \ge r$, then $s$ must have length at least $r-1$, or else we could add a missing letter to the beginning of $s$ without violating $r$-sparsity, and the resulting sequence would still be too short to contain any copy of $u$.
\end{proof}

\begin{lem}\label{lem:ssat_first}
    Suppose that $u$ is a sequence in which the first letter occurs multiple times or the last letter occurs multiple times. Then $\ssat(u, n) \ge n$.
\end{lem}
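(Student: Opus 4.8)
The plan is to mirror the argument used for Lemma~\ref{lem:sat_first}, adapting it to the semisaturation setting, where the host sequence need not avoid $u$. First I would assume without loss of generality that the first letter of $u$ occurs multiple times in $u$; the case where the last letter occurs multiple times is symmetric, with ``front'' replaced by ``end'' throughout. Let $r$ be the number of distinct letters of $u$, let $x$ be any $u$-semisaturated sequence on the alphabet $\{a_1,\dots,a_n\}$, and suppose for contradiction that some letter $a_i$ does not occur in $x$.

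Next I would form $x'$ by prepending $a_i$ to $x$ and check that this addition is ``illegal'' for a semisaturated sequence to admit, i.e.\ that it neither violates $r$-sparsity nor induces a new copy of $u$. For $r$-sparsity: the only length-$r$ window of $x'$ that is not already a window of $x$ consists of $a_i$ together with the first $r-1$ letters of $x$; these $r-1$ letters are distinct because $x$ is $r$-sparse, and none of them equals $a_i$ since $a_i\notin x$, so this window has $r$ distinct letters. For the copy condition: any copy of $u$ in $x'$ that is not already a copy living entirely inside $x$ must use the prepended $a_i$; but $a_i$ sits at the very front of $x'$ and occurs nowhere else in $x'$, so it can only play the role of the first letter of $u$, and since the first letter of $u$ occurs at least twice in $u$ this would force at least two occurrences of $a_i$ in the copy, a contradiction. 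Hence adding $a_i$ to the front of $x$ is a legal move that introduces no new copy of $u$, contradicting that $x$ is $u$-semisaturated. Therefore every letter of the alphabet occurs in $x$, so $|x|\ge n$ and $\ssat(u,n)\ge n$.

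I do not expect a real obstacle here; the only point that needs care is the meaning of ``new copy'': because we are not assuming $x$ is $u$-free, copies of $u$ already contained in $x$ are irrelevant, and it suffices to rule out copies that genuinely use the added letter, which the single-occurrence observation handles. I would also note in passing that this lemma, together with the earlier constant-length constructions, is precisely what drives the semisaturation dichotomy, namely that $\ssat(u,n)=O(1)$ if and only if the first letter and the last letter of $u$ each occur exactly once.
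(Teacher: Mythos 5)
Your proof is correct and takes essentially the same approach as the paper, which simply states that the argument is analogous to that of Lemma~\ref{lem:sat_first}: assume a letter $a_i$ is missing, prepend it, and observe that the prepended letter cannot participate in any copy of $u$ because it would have to be the (repeated) first letter of $u$ yet occurs only once. Your writeup just makes explicit the $r$-sparsity check and the ``new copy'' subtlety that the paper leaves implicit.
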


\begin{proof}
    The proof is analogous to Lemma~\ref{lem:sat_first}.
\end{proof}

\begin{cor}
    For all $n \ge 1$ and $k \ge 1$, we have $\ssat(a_1 a_2 \dots a_k a_1, n) = n$. 
\end{cor}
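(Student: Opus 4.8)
The plan is to prove the two inequalities $\ssat(a_1a_2\dots a_ka_1,n)\ge n$ and $\ssat(a_1a_2\dots a_ka_1,n)\le n$ separately, each by invoking a result already established. Write $u=a_1a_2\dots a_ka_1$, so that $u$ has $k$ distinct letters (hence the relevant sparsity parameter is $k$) and its first letter $a_1$ occurs more than once. The lower bound is then immediate from Lemma~\ref{lem:ssat_first}, which guarantees $\ssat(u,n)\ge n$ whenever the first (or last) letter of the forbidden sequence occurs multiple times. For the upper bound I would use the general comparison $\ssat(u,n)\le\sat(u,n)$ noted in the introduction, which holds because any $u$-saturated sequence is automatically $u$-semisaturated: it is $k$-sparse, and being $u$-free, any copy of $u$ created by inserting a letter is necessarily a new copy. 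Combining this with the already-proved identity $\sat(a_1a_2\dots a_ka_1,n)=n$ gives $\ssat(u,n)\le\sat(u,n)=n$, and together with the lower bound we conclude $\ssat(u,n)=n$.

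If a self-contained upper bound is preferred over an appeal to the $\sat$ identity, I would instead exhibit the witness $s=(1,2,\dots,n)$, a sequence of $n$ pairwise distinct letters; it has length $n$ and is $k$-sparse. To see that $s$ is $u$-semisaturated, insert any letter $i$ of the alphabet into $s$ to obtain a sequence $s'$ in which $i$ occurs exactly twice and every other letter occurs once. If $s'$ is not $k$-sparse we are done; otherwise a $k$-sparse sequence containing a repeated letter must contain a copy of $a_1a_2\dots a_ka_1$ (this is exactly the observation used in proving $\sat(a_1a_2\dots a_ka_1,n)=n$), and since $s$ itself had no repeated letter this copy is new. Either way the semisaturation condition is met, so $\ssat(u,n)\le n$.

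I do not expect a genuinely hard step here; the statement is essentially the packaging of Lemma~\ref{lem:ssat_first} with the corresponding $\sat$ result. The only point worth a moment of attention is the bookkeeping when $n$ is small relative to $k$, i.e.\ when an inserted letter leaves the sequence short, and this is precisely where routing the upper bound through $\ssat(u,n)\le\sat(u,n)$ is cleanest, since it lets the already-verified $\sat$ identity cover every value of $n$ uniformly. I would therefore present the short sandwich argument as the proof and mention the explicit sequence $(1,2,\dots,n)$ only as an illustrative remark.
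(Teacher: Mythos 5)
Your proposal is correct and takes essentially the same approach as the paper: the upper bound via $\ssat(u,n)\le\sat(u,n)=n$ and the lower bound via Lemma~\ref{lem:ssat_first}. The alternative self-contained witness $(1,2,\dots,n)$ is a fine remark but not needed, and the paper does not include it.
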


\begin{proof}
    We have $\ssat(a_1 a_2 \dots a_k a_1, n) \le n$, since \[\ssat(a_1 a_2 \dots a_k a_1, n) \le \sat(a_1 a_2 \dots a_k a_1, n) = n.\] The lower bound $\ssat(a_1 a_2 \dots a_k a_1, n) \ge n$ follows from Lemma~\ref{lem:ssat_first}.
\end{proof}

For the following proofs, we introduce the notation $\up(n,\ell)$ to refer to the sequence of length $\ell n$ obtained by repeating $a_1 a_2 \dots a_n$ a total of $\ell$ times. We refer to the successive copies of $a_1 a_2 \dots a_n$ in $\up(n, \ell)$ as blocks $1, 2, \dots, \ell$. 

\begin{thm}
    For all sequences $u$, we have $\ssat(u, n) = O(n)$.
\end{thm}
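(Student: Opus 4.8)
The plan is to exhibit, for a constant $\ell$ depending only on $u$, a concrete $u$-semisaturated sequence on $n$ letters of length $\ell n$, and the natural candidate is $\up(n,\ell)$. Write $r$ for the number of distinct letters of $u$ and $L$ for its length, and set $\ell=2L$ (note $r\le L$). The case $n<r$ is disposed of first and separately: the sequence consisting of $r-1$ copies of $a_1$ is $r$-sparse because it is shorter than $r$, and adding any letter of the $n$-letter alphabet to it yields a sequence of length exactly $r$ in which $a_1$ occurs at least twice --- using $r-1\ge 2$ when $r\ge 3$, and the fact that the only available letter is $a_1$ itself when $r=2$ (so $n=1$) --- hence the result violates $r$-sparsity. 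This gives $\ssat(u,n)=O(1)$ for $n<r$, so from now on I assume $n\ge r$.

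For $n\ge r$ I claim $\up(n,2L)$ is $u$-semisaturated. It is $n$-sparse, hence $r$-sparse. Now suppose a new occurrence of a letter $c=a_i$ is inserted at some position $p$ (allowing prepending and appending); I will produce a copy of $u$ in the enlarged sequence that uses this new occurrence. Let $\beta$ and $\gamma$ be the numbers of blocks of $\up(n,2L)$ lying entirely left of $p$ and entirely right of $p$ respectively; since at most one block can contain $p$, we have $\beta+\gamma\ge 2L-1$, so $\max(\beta,\gamma)\ge L$. If $\gamma\ge L$, fix a bijection $\phi$ from the alphabet of $u$ onto some $r$-element subset of $\{a_1,\dots,a_n\}$ with $\phi(u_1)=a_i$ (possible as $n\ge r$), let the inserted $c$ play the role of the first letter $u_1$ of $u$, and for $t=2,\dots,L$ match $u_t$ to the unique occurrence of $\phi(u_t)$ in the $(t-1)$-st full block to the right of $p$; since $L-1<L\le\gamma$ there are enough such blocks, and reading these matched letters left to right gives a subsequence isomorphic to $u$ that uses position $p$. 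The case $\beta\ge L$ is the mirror image, with $c$ playing the role of the last letter $u_L$ and $u_1\cdots u_{L-1}$ realized across $L-1$ full blocks to the left of $p$. Either way a new copy of $u$ is induced, so $\up(n,2L)$ is $u$-semisaturated and $\ssat(u,n)\le 2Ln$.

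I expect the only real obstacle to be uniformity over the insertion position, which is precisely what the inequality $\beta+\gamma\ge 2L-1$ handles: no matter where $c$ lands, at least $L$ blocks on one side are left untouched, and that is enough room to spell out all but one letter of $u$ while the inserted $c$ supplies the missing letter. Everything else is routine bookkeeping --- verifying that $\up(n,2L)$ is itself $r$-sparse, that $\phi$ can be chosen with the prescribed value using $n\ge r$, and that the subsequence produced is genuinely isomorphic to $u$ and genuinely uses the new position --- and I would set up the block notation so that these checks are immediate. Combining the two regimes yields $\ssat(u,n)\le 2Ln$ for every $n\ge 1$, which is the desired bound.
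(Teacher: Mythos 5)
Your proof is correct and rests on the same central idea as the paper's: the sequence $\up(n,\cdot)$ is $u$-semisaturated because a new occurrence inserted anywhere can be completed to a copy of $u$ by drawing the remaining letters one per block. The paper's version is slightly tighter: it uses only $\up(n,\ell)$ (with $\ell = L$ the length of $u$) and makes the inserted letter play the role of the $k$th letter of $u$, where $k$ is the index of the block into which the letter lands, drawing the remaining positions $i\ne k$ from the remaining blocks $i$. You instead double the number of blocks to $2L$ and use a counting argument ($\beta+\gamma\ge 2L-1$, hence $\max(\beta,\gamma)\ge L$) to guarantee that at least $L-1$ intact blocks lie on one side of the insertion, so the inserted letter can always be placed at one of the two ends of the copy of $u$. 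This pays a factor of $2$ in the constant but has the virtue of being more explicit about the awkward case of an insertion strictly inside a block, which the paper's phrasing (``between blocks $i$ and $i+1$'') glosses over, and you also explicitly treat $n<r$, whereas the paper's $\up(n,\ell)$ is not $r$-sparse in that regime; both points are easily patched in the paper but your write-up handles them cleanly. So: same approach, looser constant, more complete bookkeeping.
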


\begin{proof}
    We construct a sequence on $n$ distinct letters which is $u$-semisaturated. Indeed, if $u$ has length $\ell$, consider the sequence $\up(n, \ell)$. Suppose that we add a new occurrence of letter $a_j$ in $\up(n, \ell)$.
    We say that we added a letter to block $i$, if we added it between two letters of block $i$ or next to the first or last letter of block $i$. A letter may be added to multiple blocks at the same time, but this is not a problem for the rest of the proof.
    Suppose that it is added to block $k$ for some $1 \le k \le \ell$. Then the new occurrence of $a_j$ can be used to make a new copy of $u$ by using the new occurrence of $a_j$ as the $k^{\text{th}}$ letter of the copy of $u$ and picking the $i^{\text{th}}$ letter of the copy of $u$ from the $i^{\text{th}}$ block. 
    \end{proof}

\begin{lem}
    If the first letter and the last letter of $u$ each occur exactly once, then $\ssat(u, n) = O(1)$.
\end{lem}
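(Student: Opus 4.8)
Write $u = a\, u'\, b$ where $a$ is the first letter, $b$ is the last letter, and by hypothesis neither $a$ nor $b$ occurs in $u'$ (though possibly $a = b$ if $u$ has length $\le 2$, but then the claim is trivial, so assume $a \ne b$ and $u$ has length $\ell \ge 3$). Suppose $u$ has $r$ distinct letters. The goal is to exhibit, for every $n$, a $u$-semisaturated sequence on $n$ letters whose length is bounded by a constant independent of $n$.

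**The construction.** The plan is to build a short "core" sequence $s$ on a fixed number of letters — say the $r$ letters of $u$ together with a handful of extra ones — that already contains enough structure that inserting \emph{any} letter of the full $n$-letter alphabet anywhere into $s$ creates a new copy of $u$. The natural candidate: take $s$ to be $u$ itself with its first letter $a$ deleted, i.e. $s = u'\, b$, possibly padded to restore $r$-sparsity and to handle boundary insertions. The point is that if I insert a \emph{new} letter $c$ (any letter of $[n]$, whether or not it already appears in $s$) at any position of $s$, I want to read off a copy of $u$ that \emph{uses this new occurrence}: use the inserted $c$ in the role of the first letter $a$ of $u$, and then match $u'\, b$ greedily in the suffix of $s$ lying to the right of the insertion point. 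For this to work I need: (i) whatever suffix of $s$ remains to the right of any insertion point still contains a copy of $u'\, b$; and (ii) $r$-sparsity is not violated by the insertion, or if it is, that is fine since the definition of semisaturation permits "violates $r$-sparsity or induces a new copy." So really the construction should be: start from $u'\, b$, and prepend/insert just enough padding so that after deleting any single position there is still a subsequence isomorphic to $u'\, b$ to the right of every "slot." A clean way: take $s = (u'\, b)$ repeated a constant number of times — say $\ell$ times, as in the $\up$ construction but on only $r$ letters — call it $t$. Then inserting any letter $c$ into $t$: locate which of the $\ell$ copies of $u'\,b$ the insertion falls into or before; use the new $c$ as the first letter $a$ of $u$, then harvest $u'$ from the remaining copies to the right. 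Since $u'$ has fewer than $\ell$ letters, there are always enough copies left.

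**Why this forces a new copy, and why it is new.** The key verification is that the copy of $u$ produced genuinely uses the newly inserted letter, hence is "new" (not already present in $s$): this is automatic because we designate the inserted occurrence to play the role of $a$, and then the matched subsequence is $a$ followed by a copy of $u'\, b$ drawn entirely from the old sequence to its right. One must check that $s$ alone (before insertion) does \emph{not} already contain $u$ only if we want the bound to be tight as a \emph{saturation} rather than semisaturation statement — but for semisaturation we don't care, so this is a non-issue and in fact simplifies the argument considerably: we may freely take $s$ to contain many copies of $u$, and semisaturation only asks that every insertion create a \emph{new} copy. The one real case analysis is insertions near the right end of $t$: if $c$ is inserted to the right of everything, there is no room on the right to build $u'\, b$. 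Handle this by appending one extra block, or better, by choosing the distinguished role played by the inserted letter to be $b$ (the last letter) instead of $a$ when the insertion is far to the right — i.e., use the reflected version of the argument, harvesting $a\, u'$ from the left. Since $a$ and $b$ each occur once in $u$, both the "$c$ plays $a$, extend right" and "$c$ plays $b$, extend left" moves are available, and between them every insertion position is covered.

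**Main obstacle.** The delicate point is boundary behavior and $r$-sparsity: an inserted letter could coincide with an adjacent letter of $t$ and violate $r$-sparsity — but as noted, the semisaturation definition explicitly allows this, so it is not actually an obstruction, merely something to remember to invoke. The genuine work is bookkeeping: verifying that for \emph{every} insertion slot, at least one of the two harvesting directions has enough undisturbed copies of (a rotation of) $u$ remaining to complete the match, and that the resulting subsequence is order-isomorphic to $u$ and uses the new letter. I expect this to reduce to the inequality "number of copies of $u$ in $t$ exceeds $\ell = |u|$," which makes $t$ of length $O(\ell^2) = O(1)$, independent of $n$, giving $\ssat(u,n) = O(1)$ as claimed.
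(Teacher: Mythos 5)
Your proposal takes essentially the same approach as the paper's proof: both build a constant-length sequence consisting of $O(\ell)$ repetitions of a fixed block on a bounded alphabet, and both observe that an inserted letter can serve as the (unique) first letter of $u$, harvesting the rest of $u$ from the blocks to its right, or symmetrically as the (unique) last letter, harvesting from the blocks to its left, so that between the two directions every insertion slot is covered. The paper uses $\up(r,2\ell)=(a_1\cdots a_r)^{2\ell}$ and splits at the midpoint. One small caution about your specific candidate $t=(u'b)^{\ell}$: since $u'b$ omits the letter $a$, its block has only $r-1$ distinct letters, so if the inserted letter $c$ is already one of those $r-1$ letters there are only $r-2$ remaining letters to fill the other $r-1$ roles of $u$, and the match fails; using a block with all $r$ letters, as in $\up(r,2\ell)$, fixes this, which is the variant you already gesture at.
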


\begin{proof}
    Let $\ell$ be the length of $u$ and let $r$ be the number of distinct letters in $u$. We claim that $\up(r, 2\ell)$ is $u$-semisaturated on the alphabet $a_1, a_2, \dots, a_n$. Indeed, suppose that we insert some letter $a_j$ anywhere in $\up(r, 2\ell)$. If we insert it in the first half of $\up(r, 2\ell)$, then we have a new copy of $u$ with the new letter as the first letter of the copy and each of the next $l-1$ blocks contributing one letter to the copy. If we insert it in the second half or at the midpoint of $\up(r, 2\ell)$, then we have a new copy of $u$ with the new letter as the last letter of the copy and each of the preceding $l-1$ blocks contributing one letter to the copy.
\end{proof}

\begin{cor}\label{cor_char}
    For any sequence $u$, we have $\ssat(u, n) = O(1)$ if and only if the first letter and the last letter of $u$ each occur exactly once. Otherwise we have $\ssat(u, n) = \Theta(n)$.
\end{cor}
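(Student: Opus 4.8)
The plan is to assemble the three results already proved in this section and do a small amount of bookkeeping. The relevant inputs are: the lemma giving $\ssat(u,n) = O(1)$ whenever the first letter and the last letter of $u$ each occur exactly once; the theorem giving $\ssat(u,n) = O(n)$ for every sequence $u$; and Lemma~\ref{lem:ssat_first}, which gives $\ssat(u,n) \ge n$ whenever the first letter of $u$ occurs multiple times or the last letter of $u$ occurs multiple times. Together these cover both directions of the biconditional and the dichotomy.

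First I would dispatch the forward implication of the biconditional: if the first letter and the last letter of $u$ each occur exactly once, then the preceding lemma gives $\ssat(u,n) = O(1)$ directly. For the reverse implication I would argue by contrapositive. Since every letter appearing in $u$ occurs at least once, the negation of ``the first and last letters of $u$ each occur exactly once'' is precisely ``the first letter of $u$ occurs multiple times or the last letter of $u$ occurs multiple times.'' In that situation Lemma~\ref{lem:ssat_first} yields $\ssat(u,n) \ge n$, which is not $O(1)$, so $\ssat(u,n) = O(1)$ forces both boundary letters to occur exactly once.

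Finally, the ``otherwise'' clause is obtained along the way: when the first or last letter of $u$ occurs multiple times, the lower bound $\ssat(u,n) \ge n$ from Lemma~\ref{lem:ssat_first} together with the upper bound $\ssat(u,n) = O(n)$ from the theorem gives $\ssat(u,n) = \Theta(n)$. I do not expect any genuine obstacle here, as all the substantive work lives in the three cited statements; the only point requiring a word of care is the equivalence between ``not each exactly once'' and ``one of the two boundary letters occurs at least twice,'' which is what activates Lemma~\ref{lem:ssat_first}.
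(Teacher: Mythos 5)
Your proposal is correct and matches the paper's intent exactly: the corollary is stated without proof precisely because it follows immediately from the three preceding results you cite (the $O(1)$ lemma, the $O(n)$ theorem, and Lemma~\ref{lem:ssat_first}), combined with the observation that ``not each exactly once'' is equivalent to ``first or last letter occurs multiple times.'' No gaps.
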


Recall that $u_s$ denotes an alternating sequence of length $s+2$.

\begin{lem}
For $n \ge 2$ and odd $s \ge 1$, we have $\ssat(u_s, n) \le \left(\frac{s+1}{2}\right)n+1$.
\end{lem}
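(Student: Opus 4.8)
The plan is to exhibit an explicit $u_s$-semisaturated sequence on $n$ letters of length at most $\bigl(\tfrac{s+1}{2}\bigr)n + 1$. Since $s$ is odd, $\tfrac{s+1}{2}$ is an integer; write $t := \tfrac{s+1}{2}$, so the target length is $tn+1$. The natural candidate, guided by the odd-$s$ construction in Lemma~\ref{lemma:construction} (which gave a \emph{saturated} sequence of length $s(n-1)+1$) and by the structural conjecture that the extreme letters have $\tfrac{s+1}{2}$ occurrences, is a "staircase" alternation in which consecutive letters $i,i+1$ alternate for a stretch, with most letters appearing $s$ times but the two end letters appearing $t$ times — but since we only need semisaturation (not $u_s$-freeness), we have more freedom and can instead try to make \emph{every} letter appear about $t$ times. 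Concretely, I would try
$$
x = \bigl[\underbrace{1,2,1,2,\dots,1,2}_{s+1\text{ terms}},\ \underbrace{2,3,2,3,\dots,2,3}_{s+1\text{ terms}},\ \dots,\ \underbrace{n-1,n,n-1,n,\dots,n-1,n}_{s+1\text{ terms}}\bigr]
$$
possibly trimmed by one letter at the end to hit the exact bound; each interior letter $i$ then occurs roughly $s+1$ times (it participates in the block with $i-1$ and the block with $i+1$), which is too many, so the actual construction needs each "block" to have length about $t+1$ rather than $s+1$. Let me instead propose the block $[\,i,i+1,i,i+1,\dots\,]$ of length $t+1$ for each $i=1,\dots,n-1$, giving total length $(t+1)(n-1)$; one checks each interior letter then occurs $\le t+1$ or so times, and a small adjustment of the first/last block yields length $tn+1$.

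First I would fix the precise construction and verify its length is exactly $tn+1$ (this is a bookkeeping step: count occurrences of each letter, being careful about the shared endpoints between consecutive blocks). Second, and this is the crux, I would verify semisaturation: inserting any letter $a_j$ anywhere must create a \emph{new} copy of $u_s$, i.e., a new alternating subsequence of length $s+2$. The key observation is that within the block for letter pair $\{i,i+1\}$ there is already an alternation of length $t+1 = \tfrac{s+3}{2}$ on those two letters, and the neighboring blocks extend the supply of $i$'s (from the $\{i-1,i\}$ block on the left) and of $i+1$'s (from the $\{i+1,i+2\}$ block on the right); inserting a new copy of $i$ or of $i+1$ near this block should push some alternation to length $s+2$. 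For an inserted letter $a_j$ in a "far" region I would show one can still find $t$ occurrences of $a_j$ on one side and $t+1$ occurrences of a suitable partner letter interleaved, giving a length-$(s+2)$ alternation that uses the new occurrence — hence is new.

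The main obstacle I anticipate is the case analysis for semisaturation when the inserted letter $a_j$ is far (in the alphabet-ordering sense) from the block it lands in: e.g.\ inserting a $1$ somewhere deep inside the block for $\{n-1,n\}$. There I must produce a new alternation on $1$ and some other letter of length $s+2$ that genuinely uses the inserted $1$; the original sequence only has $\approx t$ copies of $1$ all bunched at the front, so the new alternation has to combine the front occurrences of $1$ with the inserted one and with occurrences of $2$ (or whichever partner), and I need the counts to work out to exactly reach $s+2$ and not fall one short. Handling the boundary letters $1$ and $n$ — which I have deliberately arranged to occur one fewer time — and confirming that the off-by-one there is exactly what makes the total $tn+1$ rather than $tn+n$, is the delicate part; I would treat insertions adjacent to the first block and the last block as separate subcases and check the alternation length explicitly in each.
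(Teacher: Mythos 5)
The key difficulty you flag — what happens when a letter is inserted far from the block where it originally lives — is not a ``delicate part'' that survives closer inspection; it is a fatal obstruction to the staircase construction. In your sketch, letter $1$ occurs only inside the first block $[1,2,1,2,\dots]$ of length $t+1$, so it has at most $\lceil (t+1)/2 \rceil$ occurrences, all clustered at the very front. Now insert a new $1$ near the end. Any length-$(s+2)$ alternation using the new $1$ must have a partner $y$; if $y=2$ then all the original $1$'s and $2$'s that can interleave live in the first block, giving an alternation of length at most $t+2 < 2t+1 = s+2$ (for $s \ge 5$); if $y \neq 2$ then every occurrence of $y$ comes after every original $1$, so the longest alternation is $1,y,1$ of length $3$. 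Either way no new copy of $u_s$ is created, so the sequence is not semisaturated. (There is also a lower-level bug: for even $t+1$, the block $[i,i+1,\dots,i,i+1]$ followed by $[i+1,i+2,\dots]$ puts two consecutive $i+1$'s, violating $2$-sparsity, so the construction as written is not even a valid candidate.) The paper's proof avoids this entirely by using a periodic construction: $\up\left(n, \frac{s+1}{2}\right) = (a_1 a_2 \cdots a_n)^{(s+1)/2}$ with a single extra $a_1$ appended. There each letter is spread evenly across all $(s+1)/2$ blocks, so wherever you insert a letter $x$ you can splice it into an alternation with $a_1$ (or, if $x = a_1$, with $a_2$) of length exactly $s+2$, since there are enough occurrences of both on either side. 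You should abandon the staircase idea for this lemma and use the block-repetition construction; the structural Conjecture~\ref{c:sat_alt} about end letters concerns the \emph{saturation} function, not semisaturation, and intuition transplanted from Lemma~\ref{lemma:construction} (which is about $u_s$-\emph{free} saturated sequences) misleads here, since semisaturated sequences may freely contain $u_s$.
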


\begin{proof}
    Consider the sequence obtained from \[\up\left(n, \frac{s+1}{2}\right) = \left(a_1 a_2 \dots a_n\right)^{\frac{s+1}{2}}\] by adding an occurrence of $a_1$ to the end. If we add a new letter $x \neq a_1$ to the beginning or end, then we induce a new copy of $u_s$ with first letter $x$ and second letter $a_1$. If we add a new letter $x \neq a_1$ to the interior, then we induce a new copy of $u_s$ with first letter $a_1$ and second letter $x$. If we add a new $a_1$ to the interior, then we induce a new copy of $u_s$ with first letter $a_1$ and second letter $a_2$.
\end{proof}

\begin{lem}
For $n \ge 2$ and even $s \ge 2$, we have $\ssat(u_s, n) \le \left(\frac{s+2}{2}\right)n$.
\end{lem}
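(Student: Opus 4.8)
The plan is to prove the bound by exhibiting an explicit short $u_s$-semisaturated sequence, mirroring the construction used in the odd-$s$ lemma. Set $m=\frac{s+2}{2}$, which is an integer since $s$ is even, and take $S=\up(n,m)=(a_1a_2\cdots a_n)^m$. This sequence has length $mn=\frac{(s+2)n}{2}$, it is $2$-sparse because $n\ge 2$ (consecutive letters inside a block are distinct and the block junction $a_na_1$ is distinct), and $u_s$ has $r=2$ distinct letters, so what must be verified is: inserting a new occurrence of any letter of $\{a_1,\dots,a_n\}$ into $S$ either violates $2$-sparsity or creates a new alternating subsequence of length $s+2=2m$.

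First I would fix such an insertion that preserves $2$-sparsity, namely a new occurrence of some letter $a_j$, producing $S'$. I would then choose any index $j'\ne j$ (possible since $n\ge 2$) and pass to the subsequence of $S'$ induced by the two letters $a_j,a_{j'}$. In $S$ the induced subsequence is $(a_ja_{j'})^m$ or $(a_{j'}a_j)^m$, depending on the order of $j$ and $j'$, and in either case it is an alternation of length exactly $2m$; hence the induced subsequence $T'$ of $S'$ is that alternation with one extra $a_j$ inserted, so $|T'|=2m+1$. The key structural point is that inserting a single $a_j$ into an alternating word on $\{a_j,a_{j'}\}$ creates at most one pair of equal adjacent letters: the original word has none, and the (at most two) letters flanking the insertion point were themselves adjacent in the original word, hence distinct, so at most one of them equals $a_j$. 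Consequently $T'$ is either an alternation of length $2m+1$, or it is a word of length $2m+1$ that becomes an alternation after deleting one letter of its unique pair of consecutive $a_j$'s, one member of which is the new occurrence.

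Next I would extract a new copy of $u_s$ from $T'$ in each case. If $T'$ is already alternating, I delete its first letter, unless that first letter is the new $a_j$, in which case I delete its last letter instead; since deleting an end letter of an alternation leaves an alternation, this yields an alternating subsequence of length $2m=s+2$ still containing the inserted $a_j$. If instead $T'$ has a single pair of consecutive $a_j$'s, I delete whichever member of that pair is the \emph{old} occurrence; deleting one of two consecutive equal letters from an otherwise alternating word again leaves an alternation, now of length $2m=s+2$, and it still contains the inserted $a_j$. In every case $S'$ contains a new copy of $u_s$, so $S$ is $u_s$-semisaturated and $\ssat(u_s,n)\le\frac{(s+2)n}{2}$.

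I do not anticipate a real obstacle: this is just the even-length counterpart of the preceding construction. The only care required is the bookkeeping in the ``defect'' case and in the degenerate positions where the inserted letter or the defect pair sits at an end of $T'$ — there one must check that the surviving subsequence is still alternating, has length exactly $s+2$, and genuinely uses the new letter. (For completeness, the matching lower bound $\ssat(u_s,n)\ge n$, giving $\ssat(u_s,n)=\Theta(n)$, follows from Lemma~\ref{lem:ssat_first} since the first letter of $u_s$ occurs $m\ge 2$ times, but that lies outside the present statement.)
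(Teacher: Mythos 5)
Your proof is correct and uses the same construction as the paper, namely $\up\!\left(n,\tfrac{s+2}{2}\right)$. The only difference is in how semisaturation is verified: the paper specifies, for each insertion position (beginning, interior, end) and each inserted letter ($a_1$ or $x\ne a_1$), a concrete two-letter pair such as $\{x,a_1\}$, $\{a_1,a_n\}$, or $\{a_1,a_2\}$ and reads off the alternating copy directly from the block structure, whereas you fix an arbitrary $j'\ne j$, pass to the induced two-letter subword $T'$, and argue abstractly that $T'$ has at most one defect and hence contains an alternation of length $2m$ through the new letter after deleting one carefully chosen letter. Both verifications are sound; yours is slightly more uniform and robust to where the insertion lands, at the cost of a small amount of defect bookkeeping, while the paper's is shorter because it names the witnessing pair explicitly in each case.
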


\begin{proof}
    Consider the sequence \[\up\left(n, \frac{s+2}{2}\right) = \left(a_1 a_2 \dots a_n\right)^{\frac{s+2}{2}}.\] If we add a new letter $x \neq a_1$ to the beginning, then we induce a new copy of $u_s$ with first letter $x$ and second letter $a_1$. If we add a new letter $x \neq a_1$ anywhere in the sequence except the beginning, then we induce a new copy of $u_s$ with first letter $a_1$ and second letter $x$. If we add a new $a_1$ anywhere in the sequence except the end, then we induce a new copy of $u_s$ with first letter $a_1$ and second letter $a_n$. If we add a new $a_1$ to the end, then we induce a new copy of $u_s$ with first letter $a_2$ and second letter $a_1$.
\end{proof}

In the following lemma, we obtain a lower bound for the semisaturation function of alternating sequences that nearly matches our upper bound.

\begin{lem}
For all $n \ge 3$ and $s \ge 1$, we have $\ssat(u_s,n)\ge n\lfloor\frac{s}{2}\rfloor+1$ if $s$ is even and $\ssat(u_s,n)\ge n\lfloor\frac{s}{2}\rfloor+3$ if $s$ is odd.
\end{lem}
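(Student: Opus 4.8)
The bound to be proved is exactly the one established for the \emph{saturation} function in Lemma~\ref{lem:altlower}, but since only $\ssat(u_s,n)\le\xi(n,s)$ holds, that lemma does not yield it; moreover a $u_s$-semisaturated sequence need not avoid $u_s$. The plan is to rerun the case analysis behind Lemma~\ref{lem:altlower}, replacing each appeal to ``$x$ is $u_s$-free, so any copy of $u_s$ created by an insertion is new'' with the semisaturation hypothesis itself: every insertion that preserves $2$-sparsity creates a \emph{new} copy of $u_s$, and since a new copy must use the inserted letter while $u_s$ is an alternation, such a copy is always an alternation on the inserted letter and exactly one other letter. Fix $n\ge 3$, $s\ge 1$, a $u_s$-semisaturated sequence $x$ on $n$ letters, and put $d:=\lfloor s/2\rfloor$. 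I split on whether some letter $a_1$ cannot be inserted anywhere in $x$ without breaking $2$-sparsity.

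In Case~1 (such an $a_1$ exists), the constraints ``$x$ begins and ends with $a_1$'' and ``no two consecutive letters of $x$ are both different from $a_1$'', together with $2$-sparsity, force $x=a_1b_1a_1b_2\cdots a_1b_ta_1$ with every $b_i\ne a_1$, so $|x|=2t+1$ and $f_x(a_1)=t+1$; one checks $t\ge 1$, since a sequence of length $\le 2$ is not $u_s$-semisaturated on an alphabet of size $\ge 3$, and this already settles $s=1$ (where the claimed bound is $|x|\ge 3$). For $s\ge 2$, any $u_s$-semisaturated sequence on $n$ letters uses all $n$ letters, because an absent letter can be inserted without breaking $2$-sparsity and would then occur only once, too few to sit inside any copy of $u_s$. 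For each of the $n-1$ letters $b\ne a_1$, appending $b$ to $x$ preserves $2$-sparsity (as $x$ ends in $a_1$), hence creates a new copy of $u_s$ whose last letter is the appended $b$; counting how often the last letter of an alternation of length $s+2$ must occur gives $f_x(b)\ge\lceil s/2\rceil$. Summing, $t=\sum_{b\ne a_1}f_x(b)\ge(n-1)\lceil s/2\rceil$, and a short computation shows $|x|=2t+1$ exceeds $n\lfloor s/2\rfloor+1$ for even $s$ and $n\lfloor s/2\rfloor+3$ for odd $s$ whenever $n\ge 2$.

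In Case~2 every letter can be inserted somewhere without breaking $2$-sparsity. The engine is this per-letter statement: inserting a new $a$ at a permitted spot produces a new copy of $u_s$, which is an alternation on $a$ and some $c\ne a$; reading off occurrence counts in that alternation of length $s+2$ gives (i) $f_x(a)\ge d$ for every $a$ (otherwise the alternation would have length at most $2d+1\le s+1<s+2$); (ii) writing $e_a:=f_x(a)-d\ge 0$, some $c\ne a$ always has $e_c\ge 1$, because $c$ occurs at least $d+1$ times in the alternation while $a$ was the inserted letter; and (iii) if in addition $e_a=0$, then $a$ has only $d+1$ occurrences after insertion, hence is not the ``heavy'' letter of the alternation (the one occurring $d+2$ times when $s$ is odd), so for odd $s$ its partner $c$ is heavy and $e_c\ge 2$. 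For even $s$, (ii) already rules out $e_a=0$ for all $a$, so $\sum_a e_a\ge 1$ and $|x|\ge nd+1=n\lfloor s/2\rfloor+1$. For odd $s$, assume $\sum_a e_a\le 2$; since $n\ge 3$, some letter $a$ has $e_a=0$, so (iii) gives a letter $a_2\ne a$ with $e_{a_2}\ge 2$, whence $e_{a_2}=2$ and $e_i=0$ for every $i\ne a_2$; but (ii) applied to $a_2$ produces a letter $c\ne a_2$ with $e_c\ge 1$, a contradiction. Therefore $\sum_a e_a\ge 3$ and $|x|\ge nd+3=n\lfloor s/2\rfloor+3$.

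The main obstacle is the ``$+3$'' in the odd case. Items (i) and (ii) alone only force $f_x(a)\ge d$ for all $a$ and (iterating (ii)) at least two letters with $e_a\ge 1$, i.e.\ $|x|\ge nd+2$; squeezing out the third unit needs the refinement (iii) for a letter with $e_a=0$, followed by one more application of (ii) to the letter it forces to be heavy. A secondary point to watch is $s=1$: there a $u_s$-semisaturated sequence need not use all $n$ alphabet letters (an absent letter can be inserted between two equal letters to make a copy of $aba$), so it is cleanest to dispose of $s=1$ at the outset, noting the bound there is merely $|x|\ge 3$, which both cases deliver directly.
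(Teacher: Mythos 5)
Your proof is correct and follows the same two-case analysis as Lemma~\ref{lem:altlower}, which is exactly what the paper intends by ``the proof is analogous''. The key observation you make explicit---that the semisaturation hypothesis plays the role that $u_s$-freeness plays in the saturation proof, namely guaranteeing the inserted letter participates in the new copy so the copy is an alternation on the inserted letter and one other---is the right one, and you correctly spell out how item (iii) (a letter with $e_a = 0$ forces a partner $c$ with $e_c \ge 2$) followed by one more application of item (ii) to that partner delivers the ``$+3$'' for odd $s$, matching the paper's chain $a_1 \to a_2 \to a_3$ in Lemma~\ref{lem:altlower}. One small inaccuracy in a side remark: it is not actually true that a $u_1$-semisaturated sequence on $n$ letters can omit a letter. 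If $c$ is absent, inserting $c$ at the very front of $x$ preserves $2$-sparsity and creates no new copy of $aba$, since the inserted $c$ occurs only once and has nothing before it, so it can serve neither as the repeated endpoint nor as the middle of a length-$3$ alternation. Thus the conclusion ``uses all $n$ letters'' holds for $s=1$ too, just by a different argument than the one you gave for $s\ge 2$; but since you dispose of $s=1$ directly anyway, this does not affect your proof.
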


\begin{proof}
    The proof is identical to Lemma~\ref{lem:altlower}.
\end{proof}

Below we list some of our conjectures about semisaturation for alternating sequences. 

\begin{conj}\label{c:ssat_alt}
Suppose that $x=(x_1,x_2,\dots)$ is a shortest $u_s$-semisaturated sequence on $n$ letters.
\begin{enumerate}
\item $x$ has length $n\left(\frac{s+2}{2}\right)$ if $s$ is even and $n\left(\frac{s+1}{2}\right)+1$ if $s$ is odd.
\item If $s$ is even, then every letter occurs $\frac{s+2}{2}$ times in $x$.
\item If $s$ is odd, then one letter occurs $\frac{s+1}{2}+1$ times in $x$ and all other letters occur $\frac{s+1}{2}$ times in $x$.
\end{enumerate}
\end{conj}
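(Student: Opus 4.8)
The plan is to deduce part (1) by pairing the two constructions already in hand with a matching lower bound, and then to obtain parts (2) and (3) by examining exactly when that lower bound is tight. For the upper side of part (1), the sequence $\up(n,\frac{s+2}{2})$ handles even $s$ and the sequence $\up(n,\frac{s+1}{2})$ with one extra copy of $a_1$ appended handles odd $s$ --- these are precisely the two lemmas immediately preceding the last lower-bound lemma above. So the whole statement reduces to showing that every $u_s$-semisaturated sequence $x$ on $n$ letters has $|x|\ge n\frac{s+2}{2}$ for even $s$ and $|x|\ge n\frac{s+1}{2}+1$ for odd $s$, together with the multiplicity analysis of the equality case.

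For the lower bound I would reuse the dichotomy that appears in the proof of the preceding lemma (analogous to Lemma~\ref{lem:altlower}): either (a) there is a letter $a$ for which every insertion of $a$ violates $2$-sparsity, forcing $|x|$ odd with $a$ in every odd position, or (b) every letter can be inserted somewhere while preserving $2$-sparsity. Case (a) is the easy one: every letter other than $a$ lies only in even positions, and if $b\ne a$ then inserting $b$ at the very end of $x$ must create a length-$(s+2)$ alternation whose last term is the new $b$; since the endpoint value of an alternation of length $s+2$ occurs $\lceil\frac{s+2}{2}\rceil$ times in it, $b$ must already occur at least $\lceil\frac{s+2}{2}\rceil-1$ times in $x$ (that is $\frac s2$ for even $s$ and $\frac{s+1}{2}$ for odd $s$). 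Counting the $\frac{|x|-1}{2}$ even positions then gives $|x|\ge(n-1)s+1$ for even $s$ and $|x|\ge(n-1)(s+1)+1$ for odd $s$, each of which meets or exceeds the target once $n\ge 3$ and $s\ge 3$. One genuine caveat: for $s=2$ case (a) only yields $|x|\ge 2n-1$, and in fact the pinned sequence $1\,2\,1\,3\,1\cdots 1\,n\,1$ is $u_2$-semisaturated of length $2n-1<2n$, so part (1) should be read with the value $2n-1$ (not $2n$) when $s=2$, and one should either restrict to $s\ge 3$ or treat $s=2$ by hand.

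The real work is case (b), where the naive per-letter count only gives $m_a\ge\lfloor\frac s2\rfloor$ and so loses roughly a factor $\frac12$ in the leading term. Call a letter \emph{deficient} if it occurs at most $\lfloor\frac s2\rfloor$ times. For a deficient $a$ there are $|x|+1-2m_a$ admissible insertion slots --- a quantity linear in $|x|$ --- and each insertion forces a length-$(s+2)$ alternation on $\{a,b\}$ through the new $a$ for some $b$; since $a$ then has at most $\lceil\frac s2\rceil+1$ occurrences, this alternation must use essentially all occurrences of $a$ and at least $\lceil\frac{s+2}{2}\rceil$ of some non-deficient $b$, and the old occurrences of $a$ must interleave with occurrences of that $b$ in the rigid pattern ``one $b$ strictly between consecutive $a$'s, plus boundary $b$'s'', simultaneously for every one of the many admissible slots. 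I would try to convert this rigidity, in the spirit of the friends lemmas of Section~\ref{s:sat_alt}, into a structural statement: in case (b) any two adjacent letters of $x$ have longest alternation close to $s$, so $x$ splits into long near-alternating stretches, each of which is itself semisaturated on its own alphabet, and an induction on $n$ (exactly the induction used to prove that an $abab$-saturated sequence has length $2n-1$) closes the count, with the boundary stretch contributing the extra $n$ (even $s$) or $n+\Theta(1)$ (odd $s$) over $n\lfloor\frac s2\rfloor$. If a clean decomposition turns out to be unavailable, the fallback is a direct amortized count: charge each deficient letter to the surplus occurrences of the non-deficient letters that its rigid pattern forces, and show the charges do not overlap too much.

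For parts (2) and (3), I would rerun the case-(b) argument assuming $|x|$ equals the conjectured value, so that every inequality above is tight: for even $s$ this forces every letter to occur exactly $\frac{s+2}{2}$ times (part (2)); for odd $s$ it forces every letter to occur $\frac{s+1}{2}$ times except one ``pivot'' letter that must occur once more to defeat the parity obstruction preventing all multiplicities from being equal --- the same obstruction visible in the construction $\up(n,\frac{s+1}{2})a_1$ (part (3)). The hard part, without question, is the rigidity step in case (b): upgrading ``some slot of each letter forces a long alternation'' to ``essentially every letter occurs $\lceil\frac{s+2}{2}\rceil$ times'' really does require using all $|x|+1-2m_a$ insertion slots of every letter at once, and extracting the near-alternation decomposition from that is where I expect the argument to be delicate.
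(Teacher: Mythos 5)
This statement is Conjecture~\ref{c:ssat_alt}: the paper offers no proof of it, and its actual results leave an additive gap of order $n$ between the proven lower bound $n\lfloor\frac{s}{2}\rfloor+O(1)$ (the semisaturation analogue of Lemma~\ref{lem:altlower}) and the conjectured value $n\lceil\frac{s+2}{2}\rceil+O(1)$. Your proposal does not close that gap. The upper bounds and your case (a) (a letter pinned into every odd position) are fine, but case (b) is where the entire content of the conjecture lives: you must upgrade ``each letter occurs at least $\lfloor\frac{s}{2}\rfloor$ times'' to ``each letter occurs at least $\lfloor\frac{s}{2}\rfloor+1$ times, with one exception when $s$ is odd,'' and there you offer only a program --- a rigidity statement you hope to extract from the $|x|+1-2m_a$ admissible slots, a near-alternation decomposition whose existence you do not establish, and an unexecuted fallback charging argument. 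The ``parity obstruction'' invoked for part (3) is likewise asserted, not identified. So this is a plausible plan for an open problem, not a proof, and you say as much yourself.

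Your $s=2$ caveat, however, is correct and actually refutes the conjecture as stated. The sequence $1,2,1,3,1,\dots,1,n,1$ is exactly the paper's own construction from Lemma~\ref{lemma:construction} with $s=2$; it is $u_2$-saturated, hence $u_2$-semisaturated, and has length $2n-1<2n=n\left(\frac{s+2}{2}\right)$. (Equivalently, $\ssat(u_2,n)\le\sat(u_2,n)=2n-1$ by the paper's result that every $abab$-saturated sequence on $n$ letters has length $2n-1$.) This breaks part (1) for $s=2$, and part (2) as well, since in that sequence one letter occurs $n$ times and every other letter occurs once. The conjecture, and the $s=2$ rows of the accompanying table, should be restricted to $s\ge3$ or corrected to $2n-1$. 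That observation is worth recording even though the main lower-bound argument remains incomplete.
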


In the table below, we have used a program to generate some $u_s$-semisaturated sequences which we conjecture to have minimum length.\\

\begin{tabular}{|c|c|l|}
	\hline
	s & n & some $u_s$-semisaturated sequences likely to be the shortest\\
	\hline
	2 & 6 & 0, 1, 0, 1, 2, 3, 2, 3, 4, 5, 4, 5 \\
	\hline
        2 & 6 & 0, 1, 2, 3, 0, 3, 2, 4, 5, 1, 4, 5 \\
	\hline
        2 & 6 & 0, 1, 2, 3, 4, 1, 4, 0, 3, 5, 2, 5 \\
	\hline
        2 & 6 & 0, 1, 2, 3, 4, 5, 2, 1, 5, 3, 0, 4 \\
	\hline
        4 & 4 & 0, 1, 0, 1, 0, 1, 2, 3, 2, 3, 2, 3 \\
	\hline
        4 & 4 & 0, 1, 2, 3, 1, 0, 2, 3, 1, 3, 0, 2 \\
	\hline
        4 & 4 & 0, 1, 2, 3, 1, 0, 3, 2, 1, 3, 0, 2 \\
	\hline
        4 & 4 & 0, 1, 2, 3, 2, 3, 2, 3, 0, 1, 0, 1 \\
	\hline
        3 & 5 & 0, 1, 2, 0, 1, 2, 0, 3, 4, 3, 4 \\
	\hline
        3 & 5 & 0, 1, 2, 3, 4, 2, 0, 1, 3, 2, 4 \\
	\hline
        3 & 5 & 0, 1, 2, 3, 4, 3, 1, 2, 4, 3, 0 \\
	\hline
        3 & 6 & 0, 1, 2, 3, 4, 5, 4, 0, 1, 2, 5, 4, 3 \\
        \hline
        5 & 4 & 0, 1, 0, 1, 0, 1, 0, 2, 3, 2, 3, 2, 3 \\
        \hline
        5 & 4 & 0, 1, 2, 3, 1, 0, 2, 3, 1, 0, 2, 3, 1 \\
        \hline
        5 & 4 & 0, 1, 2, 3, 1, 0, 2, 3, 1, 2, 3, 1, 0 \\
        \hline
        5 & 4 & 0, 1, 2, 3, 2, 0, 1, 3, 2, 0, 1, 3, 2 \\
        \hline
\end{tabular}

\section{Conclusion}\label{s:conc}

In this paper, we introduced saturation and semisaturation functions of sequences and proved a number of results about these functions. In particular, for alternating sequences we bounded both the saturation function and the semisaturation function up to a constant multiplicative factor. We conjecture that our upper bounds are sharp in both cases, i.e., we conjecture that we have constructed sequences of minimal length that are saturated/semisaturated for alternations. 

Perhaps the biggest remaining open problem about saturation functions of sequences is to demonstrate the following dichotomy.
\begin{conj}
For every sequence $u$, either $\sat(u,n)=\Theta(n)$ or $\sat(u,n)=O(1)$. \end{conj}
We have proved this dichotomy for all sequences $u$ with $2$ distinct letters, but it is open more generally. In order to prove the dichotomy in general, it suffices to prove that $\sat(u, n) = O(n)$ for all sequences $u$, since we already proved that $\sat(u, n) \ge n$ for all positive integers $n$ or $\sat(u, n) = O(1)$ for every sequence $u$. If this dichotomy is proved, a more ambitious open problem is to characterize all sequences $u$ as to whether $\sat(u, n) = \Theta(n)$ or $\sat(u, n) = O(1)$. We already proved the analogous characterization for semisaturation functions of sequences in Corollary~\ref{cor_char}.

Besides the open problems that we have discussed, there are some additional future directions for research on saturation functions and semisaturation functions of sequences. In a number of papers about extremal functions of Davenport-Schinzel sequences (see, e.g., \cite{nivasch, pettie15, geneson15}), researchers have investigated the function $\psi_s(m, n)$, which is the maximum possible length of any Davenport–Schinzel sequence of order $s$ on $n$ letters that can be partitioned into at most $m$ contiguous blocks, where all letters in each block are distinct. Bounds on $\psi_s(m, n)$ were used to determine bounds on Davenport-Schinzel sequences without the additional block restriction. It would be interesting to investigate saturation functions and semisaturation functions for sequences with restrictions on the number of blocks.\\

\noindent {\bf Acknowledgement.} 
We are thankful for the valuable comments from the reviewers, which help improve the article significantly.
This research started in CrowdMath 2021. We thank the organizers for this opportunity. We also thank Art of Problem Solving user Anurag Ramachandran for proving Lemma~\ref{lem:friends} and collaborating on the proof of Lemma~\ref{lemma:construction}.
Shen-Fu Tsai is supported by the Ministry of Science and Technology of Taiwan under grants MOST 111-2115-M-008-010-MY2 and MOST 113-2115-M-008-006-MY3.

\end{document}